\pdfoutput=1
\documentclass[11pt]{article}

\usepackage[T1]{fontenc}
\usepackage{lmodern}
\usepackage{microtype}
\usepackage[margin=1.18in]{geometry}
\usepackage{amsmath,amssymb,amsthm,mathtools}
\usepackage{xcolor}
\usepackage{graphicx}
\usepackage{hyperref}
\usepackage[nameinlink,capitalize,noabbrev]{cleveref}

\hypersetup{
  colorlinks=true,
  linkcolor=blue!45!black,
  citecolor=green!35!black,
  urlcolor=blue!50!black,
  pdftitle={Stochastic Domination of Gaussian Maxima: A Resolution of the Weak Simplex Conjecture},
  pdfauthor={Abhijeet Mulgund}
}

\allowdisplaybreaks

\newtheorem{theorem}{Theorem}[section]
\newtheorem{proposition}[theorem]{Proposition}
\newtheorem{lemma}[theorem]{Lemma}
\newtheorem{corollary}[theorem]{Corollary}
\newtheorem{problem}[theorem]{Open problem}
\newtheorem{resolvedproblem}[theorem]{Problem}
\theoremstyle{remark}
\newtheorem{remark}[theorem]{Remark}

\crefname{theorem}{theorem}{theorems}
\Crefname{theorem}{Theorem}{Theorems}
\crefname{proposition}{proposition}{propositions}
\Crefname{proposition}{Proposition}{Propositions}
\crefname{lemma}{lemma}{lemmas}
\Crefname{lemma}{Lemma}{Lemmas}
\crefname{corollary}{corollary}{corollaries}
\Crefname{corollary}{Corollary}{Corollaries}
\crefname{problem}{open problem}{open problems}
\Crefname{problem}{Open problem}{Open problems}
\crefname{resolvedproblem}{problem}{problems}
\Crefname{resolvedproblem}{Problem}{Problems}
\crefname{remark}{remark}{remarks}
\Crefname{remark}{Remark}{Remarks}

\newcommand{\R}{\mathbb R}
\newcommand{\E}{\mathbb E}
\newcommand{\Pp}{\mathbb P}
\newcommand{\one}{\mathbf 1}
\newcommand{\trans}{\mathsf T}
\newcommand{\ind}{\mathbf 1}
\newcommand{\ip}[2]{\langle #1,#2\rangle}
\newcommand{\norm}[1]{\lVert #1\rVert}
\newcommand{\cN}{\mathcal N}
\newcommand{\cF}{\mathcal F}
\newcommand{\cJ}{\mathcal J}

\newcommand{\M}{\mathsf M}
\DeclareMathOperator{\diag}{diag}
\DeclareMathOperator{\conv}{conv}
\DeclareMathOperator{\rank}{rank}

\title{Stochastic Domination of Gaussian Maxima: A Resolution of the Weak Simplex Conjecture}
\author{
  Abhijeet Mulgund \thanks{This work was supported by NSF under awards 2240532 and 2217023.} \\
  \small University of Illinois Chicago\\
  \small \href{mailto:mulgund2@uic.edu}{\texttt{mulgund2@uic.edu}}
}
\date{}

\begin{document}
\maketitle

\begin{abstract}
Let $R$ be an $m\times m$ correlation matrix satisfying
$R-\one\one^{\trans}/m\succeq0$, let $X\sim\cN(0,R)$, and let
$Z_1,\ldots,Z_m$ be independent standard Gaussian random variables.  We prove
\[
  \max_iX_i\leq_{\mathrm{st}}\max_iZ_i,
\]
with equality in distribution if and only if $R=I_m$.  We use this comparison
to resolve the Weak Simplex Conjecture: among $d+1$ equiprobable equal-energy
signals in $\R^d$ transmitted over an additive white Gaussian noise channel,
the regular simplex is the unique maximizer of the average probability of
correct maximum-likelihood decoding at every positive signal-to-noise ratio.  The same
comparison proves the Simplex Mean Width Conjecture and gives the exact
finite-energy performance of deterministic no-feedback AWGN codes with
equiprobable messages, no restriction on the number of channel uses, and a
maximal per-codeword energy constraint.

The proof uses a Gaussian product inequality for log-concave functions whose
first moments with respect to standard Gaussian measure vanish.  A variational
argument chooses one exponential tilt and one truncation endpoint in each
coordinate so that this product inequality applies and a Gaussian change of
measure returns all coordinates to the prescribed common threshold.  A strict
form of the product inequality also shows that, unless $R=I_m$,
$\Pp\{X\leq c\one\}>\Phi(c)^m$ for every finite $c$, and hence gives the
distributional equality statement.
A Lean formalization is available at
\url{https://github.com/abhmul/weak-simplex-conjecture-lean}.
\end{abstract}

\noindent\textbf{2020 Mathematics Subject Classification.}
Primary 60E15; Secondary 52A40, 94A24, 94B70.

\noindent\textbf{Keywords.}
Gaussian maxima, stochastic domination, Gaussian inequalities, Weak Simplex
Conjecture, simplex mean width, AWGN codes.

\section{Introduction}

The Weak Simplex Conjecture traces back to an observation of Shannon reported
by Rice in 1950 \cite{Rice1950}; the name was later coined by Massey in 1988
\cite{Massey1988}.  It asks how to place $m=d+1$ equiprobable signals of equal
energy in $\R^d$ when the channel adds isotropic Gaussian noise and the receiver
uses maximum-likelihood decoding.  The conjecture asserts that the vertices of
a regular simplex maximize the average probability of correct decoding at
every positive signal-to-noise ratio.  If the unit signal vectors are
$x_1,\ldots,x_m$, if $G=(\ip{x_i}{x_j})_{i,j}$ is their Gram matrix, and if
$\lambda>0$ is the signal-to-noise ratio, then the correct decoding probability satisfies
\begin{equation}\label{eq:intro-ml}
  P_{\mathrm c}(x_1,\ldots,x_m;\lambda)
  =\frac{e^{-\lambda^2/2}}{m}
    \E\exp\!\left\{\lambda\max_{1\leq i\leq m}\xi_i\right\},
  \qquad \xi\sim\cN(0,G).
\end{equation}
We refer to \(\xi\) as the score vector for the code with Gram matrix \(G\),
since its coordinates are the random parts of the maximum-likelihood scores.
Thus the conjecture is equivalent to an extremal problem for the
moment-generating function of a Gaussian maximum.  The difficulty lies in the
failure of the usual entrywise covariance comparisons, such as Slepian's
inequality \cite{Slepian1962}, to settle the problem: an arbitrary competing
Gram matrix need not be entrywise comparable with the regular-simplex Gram
matrix.

Our main theorem is a stochastic comparison.  Write $J=\one\one^{\trans}$.  If
$R$ is a correlation matrix satisfying
\begin{equation}\label{eq:intro-psd-condition}
  R-\frac1mJ\succeq0,
\end{equation}
then, for $X\sim\cN(0,R)$ and independent standard Gaussian random variables
$Z_1,\ldots,Z_m$,
\begin{equation}\label{eq:intro-main}
  \Pp\!\left\{\max_iX_i\leq c\right\}
  \geq \Phi(c)^m
  =\Pp\!\left\{\max_iZ_i\leq c\right\}
  \qquad(c\in\R).
\end{equation}
Equivalently, $\max_iX_i\leq_{\mathrm{st}}\max_iZ_i$.  Furthermore, the two maxima have
the same distribution if and only if $R=I_m$.  The common threshold in
\eqref{eq:intro-main} is essential: the analogous comparison for arbitrary
threshold vectors is false; see \Cref{rem:unequal-thresholds}.  The direct
proof in \Cref{app:centered-product} gives the stronger conclusion that the
inequality is strict at every finite $c$ when $R\neq I_m$.

The coding problem is connected to \eqref{eq:intro-main} by adding the same
independent Gaussian random variable to every score and then rescaling.  On
covariance matrices, this operation is
\begin{equation}\label{eq:intro-normalization}
  R=\frac{m-1}{m}G+\frac1mJ.
\end{equation}
It sends the regular-simplex Gram matrix
$\frac{m}{m-1}(I_m-J/m)$ to $I_m$.  If
$\lambda=\sqrt{(m-1)/m}\,\mu$, then the corresponding Gaussian maxima satisfy
\begin{equation}\label{eq:intro-mgf-normalization}
 e^{-\mu^2/2}\E e^{\mu\max_iX_i}
 =e^{-\lambda^2/2}\E e^{\lambda\max_i\xi_i}.
\end{equation}
Combining \eqref{eq:intro-main}--\eqref{eq:intro-mgf-normalization} with
\eqref{eq:intro-ml} proves the Weak Simplex Conjecture.  The equality statement
also shows that the regular simplex is the unique maximizer, up to orthogonal
transformation and relabeling, at every $\lambda>0$.  This also yields
the exact optimal decoding probability in the deterministic no-feedback
finite-energy additive white-Gaussian noise (AWGN) model of \Cref{cor:finite-energy}.

Another consequence is the following bound on expected maxima of Gaussian random variables. Since the added Gaussian random variable has mean zero,
\begin{equation}\label{eq:intro-mean-width}
  \E\max_i\xi_i
  =\sqrt{\frac{m}{m-1}}\,\E\max_iX_i.
\end{equation}
Applying \eqref{eq:intro-main} to the identity function proves
\cite[Conjecture~1.4]{Litvak2018}, which is equivalent to the Simplex Mean
Width Conjecture \cite[Conjecture~1.1]{Litvak2018}.  Thus the regular simplex
uniquely maximizes mean width among convex hulls of $m$ points in the unit ball
of $\R^d$ whenever $d\geq m-1$.

\subsection{Proof idea}

The first ingredient is a Gaussian product inequality.  Let
$X\sim\cN(0,R)$, where $R$ is any correlation matrix.  If bounded
log-concave functions $f_i:\R\to[0,\infty)$ satisfy
\begin{equation}\label{eq:intro-first-moment}
  \int_{\R} zf_i(z)\,d\gamma_1(z)=0,
\end{equation}
then
\[
  \E\prod_i f_i(X_i)\geq\prod_i\int f_i\,d\gamma_1.
\]
After division by its Gaussian integral, each factor in
\eqref{eq:intro-first-moment} defines a probability measure of mean zero.  The
product inequality is a specialization of the centered Gaussian
forward--reverse Brascamp--Lieb theorem of Milman, Nakamura, and Tsuji
\cite{MilmanNakamuraTsuji2026}; we record the specialization and give a direct
proof in \Cref{app:centered-product}.

The indicator of $(-\infty,c]$ has the correct product but not the required
first moment.  We repair this by multiplying by an exponential and allowing
the endpoint to move.  Set
\begin{equation}\label{eq:intro-rH}
  r(s)=\frac{\varphi(s)}{\Phi(s)},
  \qquad H(s)=s+r(s).
\end{equation}
Then
\begin{equation}\label{eq:intro-factor}
  f_s(z)=e^{r(s)z}\ind_{(-\infty,H(s)]}(z)
\end{equation}
satisfies $\int zf_s(z)\,d\gamma_1(z)=0$.  Applying this exponential tilt
shifts the mean of $X$.  If one parameter $s_i$ is chosen
for each coordinate and $a_i=r(s_i)$, a Gaussian change of measure returns to
the event in \eqref{eq:intro-main} precisely when
\begin{equation}\label{eq:intro-compat}
  s+a-Ra=c\one.
\end{equation}
The logarithm of the Gaussian mass of \eqref{eq:intro-factor}, viewed as a
function of its endpoint $H(s)$, has derivative $r(s)$.  This identity turns
the coupled equations in \eqref{eq:intro-compat} into the stationarity
equations of a strictly concave finite-dimensional functional.  Its maximizer
also gives
\begin{equation}\label{eq:intro-scalar-target}
  \sum_{i=1}^m\log\Phi(s_i)
  +\frac12a^{\trans}(I_m-R)a
  \geq m\log\Phi(c),
\end{equation}
which is exactly the remaining estimate needed after the product inequality
and the Gaussian change of measure are combined.

The same product argument also gives the equality statement.  For the
truncated exponential factors above, the product inequality is strict whenever
$R\neq I_m$.  It follows that the common-threshold comparison is strict at
every finite $c$, so the maximum laws can agree only when $R=I_m$.  For
completeness, \Cref{app:maximum-rigidity} gives a different proof that applies
to arbitrary centered Gaussian vectors, without the unit-variance or
semidefinite hypotheses of the main theorem.

\subsection{Relation to prior work}\label{sec:prior-work}

The Weak Simplex Conjecture originates in Shannon's work on communication over Gaussian channels
\cite{Rice1950,Shannon1959}.  Balakrishnan proved several partial results
\cite{Balakrishnan1961,Balakrishnan1963,Balakrishnan1965}.  Landau and Slepian
gave an argument valid in dimension three \cite{LandauSlepian1966}; Tanner
explained why it does not extend to all dimensions
\cite{Tanner1970,Tanner1974}.  Later formulations appear in Farber's thesis,
Cover's problem list, and Massey's Shannon Lecture
\cite{Farber1968,Cover1987,Massey1988}.  The equal-codeword-energy Weak
Simplex Conjecture is distinct from the average-energy Strong Simplex
Conjecture, which Steiner disproved \cite{Steiner1994}.

The formulations in terms of Gaussian maxima and mean width were developed in
\cite{BalitskiyKarasevTsigler2017,KabluchkoLitvakZaporozhets2017,Litvak2018}.
Sun, Hu, and Lan proved the four-variable Gaussian maximum case and the
three-dimensional mean-width case \cite{SunHuLan2020}.  Gu\'edon and Souli
extend a Chevet comparison inequality to a broader class of smooth functions
and prove their comparison for a block-structured family of Gaussian vectors
\cite{GuedonSouli2026}.  Their Conjecture~1.6 records the Gaussian
expected-maximum formulation of the Simplex Mean Width Conjecture resolved
here.

The product inequality used in the proof is related to the
\v{S}id\'ak--Khatri inequality and the Gaussian correlation theorem
\cite{Khatri1967,Sidak1967,Harge2004,Royen2014,AssoulineChorSadovsky2024}.
Its present form follows from the forward--reverse Brascamp--Lieb framework of
Liu, Courtade, Cuff, and Verd{\'u} \cite{LiuCourtadeCuffVerdu2018}, together
with the centered Gaussian results of Milman, Nakamura, and Tsuji
\cite{Milman2025InverseBL,NakamuraTsuji2025Centered,NakamuraTsuji2026Saturation,MilmanNakamuraTsuji2026}.
Nakamura and Tsuji's equality theorem for two centered convex sets also implies
the equality case for finite symmetric rectangles.  The direct proof in
\Cref{app:centered-product} is shorter for the product form needed here.

\paragraph{Independent work on strictness and equality.}
After version~1 of this paper was posted, the present author and Su et al.\ independently
obtained the pointwise strictness and equality cases
\cite{SuYangXuI2026}.  A complete formal proof in the companion Lean
repository was committed on 19 July 2026\footnote{See
\href{https://github.com/abhmul/weak-simplex-conjecture-lean/commit/d7b84199cac385378abf365fa80872b67580ed93}
{See commit \texttt{d7b8419}}.}; the
branch was merged into the public \texttt{main} branch on 14 August 2026, and
the result was incorporated into the revision of this manuscript dated
20 August 2026.
Su et al.\ obtain the same strict common-threshold conclusion by strengthening
the first normalized self-convolution step: they first isolate a strict
bivariate rectangle gap and then use Royen's Gaussian correlation theorem to
group the remaining coordinate slabs.  The direct proof here instead proves
the full strict rectangle inequality by conditioning and induction. Thus the two pointwise
strictness proofs share the self-convolution framework of the direct
centered-product proof in \Cref{app:centered-product}, but establish the
decisive strict rectangle input differently.  We regard them as independent
parallel resolutions; the original question is retained as the resolved
\Cref{prob:equality-cases-resolved}.

\paragraph{Prior announced proofs.}
Two earlier preprints announce proofs: Goldsmith for the Simplex Mean Width
Conjecture and Pastore for the Weak Simplex Conjecture
\cite{Goldsmith2021,Pastore2023}.  Our proof is independent of both.  In
\Cref{app:prior-proofs} we give a version-specific account of unresolved steps
in the respective submissions arXiv:2112.03393v2 and arXiv:2306.13478v2 that prevent us from using their
announced conclusions.  That discussion concerns only those posted versions
and does not rule out repairs or later arguments.

\subsection{Organization}

\Cref{sec:results,sec:classification} state the main consequences and reduce
the coding problem to the covariance comparison.  The proof then proceeds in
three stages: \Cref{sec:centered-input} supplies the Gaussian product
inequality, \Cref{sec:alignment} constructs compatible centered factors, and
\Cref{sec:orthant-proof} combines them.  \Cref{sec:consequences} then returns
to the coding and geometric applications.  The appendices contain the
one-dimensional calculations, a direct proof and strict form of the product
inequality, a proof that a centered Gaussian vector whose maximum has the same
law as the maximum of independent standard Gaussians must have covariance
$I_m$, a version-specific discussion of two prior proof claims, and the details
needed to apply the Weak Simplex Conjecture to the AWGN sequence model of
\cite{PolyanskiyPoorVerdu2011}.

\section{Main results}\label{sec:results}

The central comparison is most naturally stated for the transformed
covariance $R$.  We state it first and then record the consequences obtained
after returning to the original score covariance $G$.

Throughout, $m\geq2$, $\one=(1,\ldots,1)^{\trans}\in\R^m$, and
$J=\one\one^{\trans}$.  We write $\gamma_k$ for standard Gaussian measure
on $\R^k$, and $\varphi$ and $\Phi$ for the standard Gaussian density and
distribution function, respectively.  All logarithms are natural, and vector inequalities
such as $x\leq y$ are coordinatewise.  A correlation matrix is a
positive-semidefinite matrix with unit diagonal and may be singular.  For real
random variables $U$ and $V$, we write $U\leq_{\mathrm{st}}V$ when
$\Pp\{U>t\}\leq\Pp\{V>t\}$ for every $t\in\R$.

Let
\[
  \M_X=\max_{1\leq i\leq m}X_i,
  \qquad
  \M_Z=\max_{1\leq i\leq m}Z_i,
\]
where $Z_1,\ldots,Z_m$ are independent standard Gaussian random variables.

\begin{theorem}\label{thm:orthant}
Let $R$ be an $m\times m$ correlation matrix satisfying
\[
  R-\frac1mJ\succeq0,
\]
and let $X\sim\cN(0,R)$.  Then
\begin{equation}\label{eq:stochastic-main}
  \M_X\leq_{\mathrm{st}}\M_Z.
\end{equation}
Equivalently,
\begin{equation}\label{eq:orthant-main}
  \Pp\{X\leq c\one\}\geq\Phi(c)^m
  \qquad\text{for every }c\in\R.
\end{equation}
Moreover,
\begin{equation}\label{eq:max-law-equality}
  \M_X\stackrel{d}{=}\M_Z
  \qquad\Longleftrightarrow\qquad
  R=I_m.
\end{equation}
\end{theorem}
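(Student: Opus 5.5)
The plan follows the outline in the introduction, in three stages: a product inequality, a choice of centered factors, and a Gaussian change of measure.

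\textbf{Product inequality.} Let $X\sim\cN(0,R)$ with $R$ any correlation matrix. Let $f_i:\R\to[0,\infty)$ be bounded and log-concave with $\int z f_i(z)\,d\gamma_1(z)=0$. We use
\[
  \E\prod_i f_i(X_i)\geq\prod_i\int f_i\,d\gamma_1 ,
\]
taken as the centered forward--reverse Brascamp--Lieb specialization cited in the introduction. We also need its strict form for truncated exponential factors when $R\neq I_m$.

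\textbf{Centered factors.} For a parameter vector $s\in\R^m$, set $a_i=r(s_i)$ and use the factors $f_{s_i}(z)=e^{a_iz}\ind_{(-\infty,H(s_i)]}(z)$ from \eqref{eq:intro-factor}. Each factor is log-concave and bounded. A one-line computation shows each has vanishing Gaussian first moment, since $\int_{-\infty}^{H} z e^{az}\varphi(z)\,dz$ reduces, after completing the square, to $e^{a^2/2}\bigl(a\Phi(H-a)-\varphi(H-a)\bigr)$. With $H-a=s$ and $a=r(s)$ this vanishes. The Gaussian mass of each factor is $e^{a_i^2/2}\Phi(s_i)$.

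\textbf{Change of measure.} Write $\E\prod_i f_i(X_i)=\E\bigl[e^{a^{\trans}X}\ind\{X\leq H\}\bigr]$. Tilting by $e^{a^{\trans}X}$ turns $X$ into $X+Ra$ and contributes the factor $e^{a^{\trans}Ra/2}$. This gives
\[
  e^{a^{\trans}Ra/2}\,\Pp\{X\leq H-Ra\}.
\]
If $s$ solves \eqref{eq:intro-compat}, then $H-Ra=s+a-Ra=c\one$, and the product inequality yields
\[
  \log\Pp\{X\leq c\one\}\geq\sum_i\log\Phi(s_i)+\tfrac12a^{\trans}(I_m-R)a .
\]
It then suffices to establish \eqref{eq:intro-scalar-target}.

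\textbf{Variational step.} Reparametrize by the endpoints $h_i=H(s_i)$; $H$ is increasing since $0<H'=1+r'<1$. Set $L(h)=\log(\text{Gaussian mass of }f_s)=a^2/2+\log\Phi(s)$. Then $dL/dh=r(s)=a$. Consider the functional
\[
  \Psi(h)=\sum_i L(h_i)-\tfrac12 a(h)^{\trans}Ra(h).
\]
Equivalently, take its Legendre-dual form
\[
  \sup_{a}\Bigl[\sum_i \bigl(L^*\text{-type terms}\bigr)\Bigr]
\]
in the variables $a_i$. Here $\ell(a)=\log\Phi(s)+a^2/2-a\,h$ is the concave conjugate of $L$, using that $L$ is concave in $h$ (log-concavity of the truncated mass). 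The stationarity equations of
\[
  F(a)=\sum_i\bigl(\ell(a_i)+c\,a_i\bigr)-\tfrac12 a^{\trans}Ra
\]
are $h_i-(Ra)_i=c$, which is exactly \eqref{eq:intro-compat}. Strict concavity plus coercivity should give a unique maximizer. The value at the maximizer equals the right side of the bound above. So we must show
\[
  \max_a F(a)\geq F\text{ evaluated cleverly}\geq m\log\Phi(c).
\]
For the lower bound, restrict to constant vectors $a=t\one$. Then $R-J/m\succeq0$ gives $\one^{\trans}R\one=m$ exactly, since $R$ has unit diagonal, so $\one^{\trans}R\one\le m$, and $R\succeq J/m$ gives $\ge m$. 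Hence $F(t\one)=m[\ell(t)+ct-t^2/2]$. Maximizing over $t$ is the $m=1$, $R=I$ problem. This is an identity: its value is $\log\Phi(c)$ because the one-dimensional inequality is sharp. I expect the bookkeeping of exactly which functional is maximized, including signs and the conjugacy, to be the main obstacle. The semidefinite hypothesis should enter precisely in comparing the true maximizer with the symmetric one. If the restriction to constants does not directly suffice, I would instead use $a^{\trans}(I-R)a\ge -\,(\cdot)$ bounds via $R\succeq J/m$ on the orthogonal decomposition.

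\textbf{Conclusion and equality case.} This proves \eqref{eq:orthant-main}, which is equivalent to \eqref{eq:stochastic-main}. For \eqref{eq:max-law-equality}, the direction $R=I_m\Rightarrow$ equality is trivial. Conversely, if $R\neq I_m$, the strict product inequality for these truncated exponential factors gives strict inequality in \eqref{eq:orthant-main} at every finite $c$. Hence the distribution functions of $\M_X$ and $\M_Z$ differ, and the laws are not equal.
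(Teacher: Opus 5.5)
Your reduction is the paper's: centered truncated exponentials with $a=r(s)$ and $b=H(s)$, the Gaussian shift, and the target $\sum_i\log\Phi(s_i)+\frac12a^{\trans}(I_m-R)a\geq m\log\Phi(c)$ under $b-Ra=c\one$. The variational step, where the real work lies, fails as written.

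Since $\ell(a)=\sup_h[L(h)-ah]$, the envelope theorem gives $\ell'(a)=-h$ and $\ell''>0$, so $\ell$ is convex. Hence $\nabla F=-h+c\one-Ra$. The critical points of your $F$ therefore solve $h+Ra=c\one$, not \eqref{eq:intro-compat}. Moreover $F$, with Hessian $\diag(\ell''(a_i))-R$, is not concave in general. Already for $m=1$ and $c=0$ one has $F'(t)=-h-t<0$, so $F$ has no critical point, and $\sup F=0\neq\log\Phi(0)$.

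Reversing the sign of the quadratic term does fix stationarity. The genuine Legendre dual is $D(a)=\sum_i(\ell(a_i)+ca_i)+\frac12a^{\trans}Ra$. It is convex, its critical point satisfies $b-Ra=c\one$, and its value there is exactly your target quantity. But this is a \emph{minimum}, so evaluating at $a=t\one$ only gives $\min D\leq D(t\one)$. That is an upper bound on the target, the wrong direction, and no trial point in the tilt variables can give the lower bound.

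Two further problems. The claim $\one^{\trans}R\one=m$ is false: a unit diagonal gives no upper bound ($R=J$ satisfies the hypothesis and $\one^{\trans}J\one=m^2$), and $R\succeq J/m$ gives only $\one^{\trans}R\one\geq m$. Also, your $\Psi(h)$ does not involve $c$, so it cannot encode the compatibility condition.

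The missing idea is to maximize a concave functional of the \emph{endpoints}, so that a symmetric trial point gives a lower bound. For invertible $R$, take
\[
  \Psi_c(b)=\sum_i\cF(b_i)-\tfrac12(b-c\one)^{\trans}R^{-1}(b-c\one),\qquad b\in(0,\infty)^m .
\]
It has the following properties:
\begin{itemize}
\item It is strictly concave and tends to $-\infty$ at the boundary and at infinity.
\item Its maximizer satisfies $b-c\one=R\cF'(b)$, that is, $b-Ra=c\one$.
\item Its maximum equals $\sum_i\log\Phi(s_i)+\frac12a^{\trans}(I_m-R)a$.
\item At $b=H(c)\one$ it equals $m\log\Phi(c)+\frac12r(c)^2(m-\one^{\trans}R^{-1}\one)\geq m\log\Phi(c)$, because $R\succeq J/m$ is equivalent, by a Schur complement, to $\one^{\trans}R^{-1}\one\leq m$.
\end{itemize}
That last point is where the semidefinite hypothesis really enters. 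Weak duality, $D(a)\geq\Psi_c(b)$, is what would connect your dual to it.

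The paper's $\cJ_c(q,v)$ in \Cref{thm:variational} is this same primal problem, written with Gram vectors of $\alpha^{-1}(R-J/m)$ instead of $R^{-1}$. As a result, singular $R$ needs no approximation, and the trial point $(H(c),0)$ gives exactly $m\log\Phi(c)$.

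Finally, your equality clause rests on a strict product inequality that you only assume. It does not follow from the cited non-strict Brascamp--Lieb theorem. The paper proves it in \Cref{prop:strict-tilted-product}: it inserts the strict \v{S}id\'ak rectangle inequality (\Cref{lem:rectangles}) into the first sum--difference step, then applies the non-strict inequality to the convolved factors. Alternatively, \Cref{app:maximum-rigidity} derives $R=I_m$ directly from equality of the maximum laws via upper-tail asymptotics.
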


The event in \eqref{eq:orthant-main} is the lower orthant
$(-\infty,c]^m$ with the \emph{same} threshold $c$ in every coordinate.  This
restriction cannot be removed.

\begin{remark}\label[remark]{rem:unequal-thresholds}
Let
\[
  R=
  \begin{pmatrix}
    1&-1/3&1/3\\
    -1/3&1&1/3\\
    1/3&1/3&1
  \end{pmatrix}.
\]
Then $R-J/3\succeq0$; indeed it is $2/3$ times the Gram matrix of
$e_1,-e_1,e_2$.  If $X\sim\cN(0,R)$, the correlation of $X_1$ and $X_2$ is
$-1/3$, so Sheppard's Lemma gives
\[
  \Pp\{X_1\leq0,X_2\leq0\}
  =\frac14-\frac{1}{2\pi}\arcsin\!\left(\frac13\right)
  <\frac14.
\]
As $L\to\infty$,
$\Pp\{X_1\leq0,X_2\leq0,X_3\leq L\}$ tends to the left-hand side,
whereas $\Phi(0)^2\Phi(L)$ tends to $1/4$.  Hence, for all sufficiently
large finite $L$,
\[
  \Pp\{X\leq(0,0,L)^{\trans}\}
  <\Phi(0)^2\Phi(L).
\]
Thus the comparison in \Cref{thm:orthant} can only apply to a
common-threshold statement; an arbitrary lower-orthant comparison does not
hold in general.
\end{remark}

\begin{remark}\label[remark]{rem:strict-common-threshold}
The applications below require only the equality-in-distribution statement in
\eqref{eq:max-law-equality}.  \Cref{thm:strict-orthant-refinement} proves the
stronger pointwise statement
\[
  R\neq I_m
  \quad\Longrightarrow\quad
  \Pp\{X\leq c\one\}>\Phi(c)^m
  \quad\text{for every finite }c.
\]
Thus equality at even one finite threshold \(c\) forces $R=I_m$.
The same pointwise refinement was obtained independently by Su et al.~\cite[Theorem~1]{SuYangXuI2026}.
An independent argument in \Cref{app:maximum-rigidity} proves the
equality-in-distribution statement for an arbitrary centered Gaussian vector.
\end{remark}

\subsection{Consequences for Gaussian maxima and simplex codes}

The advantage of the distributional comparison is that it controls every
nondecreasing function of the maximum, rather than one moment-generating
function at a time.  The equality statement also makes these comparisons
strict for strictly increasing integrable functions.
\begin{corollary}\label[corollary]{cor:monotone}
Under the hypotheses of \Cref{thm:orthant}, every nonnegative, nondecreasing
Borel function $\psi$ satisfies
\[
  \E\psi(\M_X)\leq\E\psi(\M_Z),
\]
including when one or both expectations are infinite.  If $R\neq I_m$,
$\psi:\R\to\R$ is strictly increasing, and $\psi(\M_X)$ and
$\psi(\M_Z)$ are integrable, then
\[
  \E\psi(\M_X)<\E\psi(\M_Z).
\]
In particular, for every $\lambda>0$,
\[
  \E e^{\lambda\M_X}<\E e^{\lambda\M_Z},
  \qquad
  \E\M_X<\E\M_Z
  \qquad(R\neq I_m).
\]
\end{corollary}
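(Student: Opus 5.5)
We deduce the corollary from \Cref{thm:orthant} through the layer-cake (tail) representation of expectations of monotone functions.

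\emph{Weak inequality.} Let $\psi\geq0$ be nondecreasing and Borel. By Tonelli,
\[
  \E\psi(\M_X)=\int_0^\infty\Pp\{\psi(\M_X)>t\}\,dt.
\]
For each $t\geq0$ the set $A_t=\{x:\psi(x)>t\}$ is an up-set of $\R$, so it is either $(a_t,\infty)$ or $[a_t,\infty)$, with $a_t\in[-\infty,\infty]$. \Cref{thm:orthant} gives $\Pp\{\M_X>a\}\leq\Pp\{\M_Z>a\}$ for every $a$. Taking limits from the left gives $\Pp\{\M_X\geq a\}\leq\Pp\{\M_Z\geq a\}$ as well; continuity of the laws makes this immediate anyway. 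Hence $\Pp\{\psi(\M_X)>t\}\leq\Pp\{\psi(\M_Z)>t\}$ for every $t$. Integrating in $t$ yields the weak inequality, and this holds in $[0,\infty]$, so infinite values are allowed.

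\emph{Strict inequality.} Let $\psi:\R\to\R$ be strictly increasing with both expectations finite. Split $\psi=\psi^+-\psi^-$, or equivalently write
\[
  \E\psi(\M)=\int_0^\infty\Pp\{\psi(\M)>t\}\,dt-\int_{-\infty}^0\Pp\{\psi(\M)\leq t\}\,dt.
\]
Both integrals are finite by integrability. The same argument as above shows that the difference of the integrands, taken between $Z$ and $X$, is pointwise $\geq0$ in each integral; in the second integral the sign flips appropriately. It remains to show that this difference is positive on a set of positive Lebesgue measure in $t$.

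If $R\neq I_m$, then by \eqref{eq:max-law-equality} the laws of $\M_X$ and $\M_Z$ differ. Both distribution functions are continuous, so there is some $c$ with $\Pp\{\M_X\leq c\}>\Pp\{\M_Z\leq c\}$, and by continuity the strict inequality persists for all $c'$ in an open interval $I$ around $c$. Since $\psi$ is strictly increasing, the map $c'\mapsto\psi(c')$ is injective. Moreover, $\{\psi(\M)>\psi(c')\}=\{\M>c'\}$, and by monotonicity the events $\{\psi(\M)>t\}$ for $t$ in the range $(\psi(c-\delta),\psi(c+\delta))$ are sandwiched between $\{\M>c+\delta\}$ and $\{\M>c-\delta\}$.

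The main obstacle is this step: guaranteeing a positive-measure set of $t$ on which strict inequality holds. The difficulty is that $\psi$ may jump, so its range near $c$ need not contain an interval.

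\emph{Resolving the obstacle.} The fix is as follows. Strict monotonicity means $\psi(c-\delta)<\psi(c+\delta)$, so the interval $T=(\psi(c-\delta),\psi(c+\delta))$ has positive length. For every $t\in T$, set $u=\sup\{x:\psi(x)\leq t\}\in[c-\delta,c+\delta]$. Then $\{\psi(\M)>t\}$ equals $\{\M>u\}$ up to the null event $\{\M=u\}$, which has probability zero because the laws are continuous. Choosing $\delta$ so that $[c-\delta,c+\delta]\subset I$, we get strict inequality of the tail probabilities for every $t\in T$. This makes the integrated difference strictly positive.

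Finally, the two named examples are the special cases $\psi(x)=e^{\lambda x}$ and $\psi(x)=x$. Integrability of both is standard, since Gaussian maxima have all exponential moments.
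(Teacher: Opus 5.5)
Your proof is correct, but it takes a different (though equally standard) route from the paper. The paper uses the quantile coupling. It realizes both maxima as $Q_{\M_X}(W)$ and $Q_{\M_Z}(W)$ with $W$ uniform on $(0,1)$, so stochastic order becomes the pointwise inequality $Q_{\M_X}\leq Q_{\M_Z}$. The weak inequality, including infinite expectations, then follows from monotonicity of the integral after applying $\psi$. Strictness reduces to the quantile functions differing on a set of positive Lebesgue measure, where strict monotonicity of $\psi$ gives a strict pointwise inequality. That argument needs neither the structure of the superlevel sets of $\psi$ nor continuity of the laws. Your layer-cake argument instead works directly with the tail probabilities supplied by \Cref{thm:orthant}. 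The price is that you must analyze the up-sets $\{\psi>t\}$ and use continuity of the laws of $\M_X$ and $\M_Z$, both to discard the null events $\{\M=u\}$ and to extend the strict distribution-function inequality to an open interval. Your device $u=\sup\{x:\psi(x)\leq t\}\in[c-\delta,c+\delta]$, for $t$ in the nondegenerate interval $(\psi(c-\delta),\psi(c+\delta))$, is exactly the counterpart of the paper's one-line claim that the quantile functions differ on a set of positive measure. You carry out the bookkeeping that the paper leaves implicit; in the coupling version it would follow from left-continuity of quantile functions. The coupling is shorter and distribution-free, while yours is more explicit and avoids constructing a coupling.
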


We now return from the transformed covariance $R$ to the score covariance $G$.
Set
\begin{equation}\label{eq:G-delta}
  G_\triangle=\frac{m}{m-1}\left(I_m-\frac1mJ\right).
\end{equation}
This is the Gram matrix of the vertices of a regular $(m-1)$-simplex
inscribed in the unit sphere.  Conversely, any unit-vector configuration with
this Gram matrix is a regular simplex, up to an orthogonal transformation and
relabeling.  Under the covariance transformation in \eqref{eq:intro-normalization},
$G_\triangle$ is sent to $I_m$.  Thus \Cref{thm:orthant} compares every score
covariance with the regular-simplex covariance.

\begin{corollary}\label[corollary]{cor:mgf}
Let $G$ be any $m\times m$ correlation matrix, let
$\xi\sim\cN(0,G)$, and let
$\xi^\triangle\sim\cN(0,G_\triangle)$.  Then
\begin{equation}\label{eq:mgf-comparison}
  \E\exp\!\left\{\lambda\max_i\xi_i\right\}
  \leq
  \E\exp\!\left\{\lambda\max_i\xi_i^\triangle\right\}
  \qquad(\lambda>0).
\end{equation}
If $G\neq G_\triangle$, then \eqref{eq:mgf-comparison} is strict for every
$\lambda>0$.  Equivalently, equality for any one $\lambda>0$ holds if and
only if $G=G_\triangle$.
\end{corollary}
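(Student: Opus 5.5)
The plan is to deduce \Cref{cor:mgf} from \Cref{thm:orthant} and \Cref{cor:monotone} through the covariance transformation \eqref{eq:intro-normalization}.

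Given a correlation matrix $G$, set $R=\frac{m-1}{m}G+\frac1mJ$. Then $R$ has unit diagonal, and $R-\frac1mJ=\frac{m-1}{m}G\succeq0$, so $R$ is a correlation matrix satisfying the hypothesis of \Cref{thm:orthant}. Concretely, take $\xi\sim\cN(0,G)$ and an independent $W\sim\cN(0,1)$, and define
\[
  X=\sqrt{\tfrac{m-1}{m}}\,\xi+\tfrac{1}{\sqrt m}\,W\one .
\]
Then $X\sim\cN(0,R)$ and $\M_X=\sqrt{(m-1)/m}\,\max_i\xi_i+W/\sqrt m$. For $\lambda>0$ put $\mu=\lambda\sqrt{m/(m-1)}$. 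By independence,
\[
  \E e^{\mu\M_X}=e^{\mu^2/(2m)}\,\E e^{\lambda\max_i\xi_i},
\]
and $\mu^2/(2m)=\mu^2/2-\lambda^2/2$, which is the identity \eqref{eq:intro-mgf-normalization}.

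Applying the same construction to $G_\triangle$ gives $R=\frac{m-1}{m}\cdot\frac{m}{m-1}(I_m-J/m)+J/m=I_m$. So $\max_i\xi^\triangle_i$ corresponds to $\M_Z$ in exactly the same way:
\[
  \E e^{\mu\M_Z}=e^{\mu^2/(2m)}\,\E e^{\lambda\max_i\xi^\triangle_i}.
\]
Since $\psi(t)=e^{\mu t}$ is nonnegative and nondecreasing, \Cref{cor:monotone} gives $\E e^{\mu\M_X}\le\E e^{\mu\M_Z}$. Dividing by the common positive factor $e^{\mu^2/(2m)}$ yields \eqref{eq:mgf-comparison}.

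For strictness, the map $G\mapsto R$ is injective, because $G=\frac{m}{m-1}(R-J/m)$. Hence $G\neq G_\triangle$ forces $R\neq I_m$. The function $e^{\mu t}$ is strictly increasing, and both $e^{\mu\M_X}$ and $e^{\mu\M_Z}$ are integrable, since a maximum of finitely many Gaussians has finite exponential moments. The strict part of \Cref{cor:monotone} therefore gives a strict inequality, which transfers back through the common factor. The ``equivalently'' clause follows at once: equality at a single $\lambda>0$ rules out $G\neq G_\triangle$, and the converse is trivial.

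I expect no real obstacle here. The only points needing care are checking the normalization constant and noting integrability, which is what licenses the strict case of \Cref{cor:monotone}.
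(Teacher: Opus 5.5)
Your proposal is correct and follows the paper's proof essentially step for step. Like the paper, you pass to $R=\frac{m-1}{m}G+\frac1mJ$, apply \Cref{cor:monotone} with $\psi(t)=e^{\mu t}$ and $\mu=\lambda\sqrt{m/(m-1)}$, and transfer back through the normalized MGF identity for both $G$ and $G_\triangle$ (whose image is $I_m$). You also obtain strictness from the injectivity of $G\mapsto R$ together with the strict part of \Cref{cor:monotone}, exactly as the paper does.
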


Combining the preceding MGF comparison with the maximum-likelihood identity
in \Cref{sec:classification} gives the coding theorem in its classical form.

\begin{corollary}[Weak Simplex Conjecture]\label[corollary]{cor:wsc}
Let $m=d+1$, let $x_1,\ldots,x_m$ be unit vectors in $\R^d$, and let the
message $I$ be uniform on $\{1,\ldots,m\}$.  Under the channel
\[
  Y=\lambda x_I+Z,
  \qquad Z\sim\cN(0,I_d),\quad \lambda>0,
\]
the average probability of correct maximum-likelihood decoding is no larger
than for a regular simplex.  For every fixed $\lambda>0$, equality holds if
and only if the $x_i$ form a regular simplex.  Thus the regular simplex is the
unique maximizer up to rotation and relabeling.
\end{corollary}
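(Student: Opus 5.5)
The plan is to derive \Cref{cor:wsc} from \Cref{cor:mgf} through the maximum-likelihood identity \eqref{eq:intro-ml}; nothing new is needed beyond checking that identity and the equality clause.

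\emph{The identity.} Since the messages are equiprobable and the noise is isotropic, the maximum-likelihood decoder decodes $Y$ to an index maximizing $\lambda\ip{x_i}{Y}$, because all $\norm{x_i}=1$. Ties occur with probability zero unless two signals coincide, and then they do not affect the value computed below. The probability of correct decoding is
\[
  P_{\mathrm c}=\frac1m\int\max_i\Bigl(\text{density of }\lambda x_i+Z\text{ at }y\Bigr)\,dy .
\]
Expanding $\norm{y-\lambda x_i}^2$, this equals
\[
  \frac{e^{-\lambda^2/2}}{m}\int e^{\lambda\max_i\ip{x_i}{y}}\,d\gamma_d(y).
\]
If $Y_0\sim\gamma_d$, then $(\ip{x_i}{Y_0})_i\sim\cN(0,G)$ with $G$ the Gram matrix, which gives \eqref{eq:intro-ml}.

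\emph{The comparison.} The Gram matrix $G$ of unit vectors is a correlation matrix, so \Cref{cor:mgf} gives
\[
  \E e^{\lambda\max_i\xi_i}\leq\E e^{\lambda\max_i\xi^\triangle_i}.
\]
The matrix $G_\triangle$ has rank $m-1=d$, so it is realized by a regular simplex inscribed in the unit sphere of $\R^d$. Its correct-decoding probability is therefore the right-hand side of \eqref{eq:intro-ml}, and this proves the inequality.

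\emph{Equality.} If equality holds for some $\lambda>0$, then \Cref{cor:mgf} forces $G=G_\triangle$. Two configurations in $\R^d$ with the same Gram matrix differ by an orthogonal map of $\R^d$, since linear relations among the vectors are determined by the Gram matrix. Hence the $x_i$ form a regular simplex, possibly after relabeling if one compares against a fixed labeled simplex. Conversely, any regular simplex has Gram matrix $G_\triangle$ after relabeling, and relabeling does not change $P_{\mathrm c}$.

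The only point that needs some care is the measure-zero tie issue and the possibility of coincident $x_i$. Coincident signals merely produce a singular $G$, which \Cref{cor:mgf} already allows since correlation matrices may be singular. The identity \eqref{eq:intro-ml} holds for any measurable tie-breaking rule, because the integral of the maximum density does not depend on how ties are broken.
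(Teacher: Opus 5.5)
Your proposal is correct and follows the paper's proof. The maximum-likelihood identity \eqref{eq:ml-integral} reduces the success probability to the Gaussian moment-generating function, \Cref{cor:mgf} gives the inequality and forces $G=G_\triangle$ at equality, and Gram-matrix rigidity gives uniqueness up to an orthogonal map and relabeling; the only cosmetic difference is that the paper uses that $G_\triangle$ has rank $d$, so any realizing configuration spans $\R^d$, whereas you invoke the equally valid general fact that two configurations in $\R^d$ with equal Gram matrices differ by an orthogonal map of $\R^d$.
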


The single-parameter MGF rigidity in \Cref{cor:mgf} and the strict coding
equality case in \Cref{cor:wsc} were obtained independently by Su et al.~\cite[Theorems~2 and~3(a)]{SuYangXuI2026}.

The comparison in \Cref{cor:mgf} applies to Gram matrices of $m$ unit vectors
in any Euclidean space.  When the ambient dimension is at least $m-1$, the
regular simplex is attainable.  In lower dimension it remains a universal
upper bound, but we do not identify the best Gram matrix under a rank constraint.

The expected-maximum comparison extends from correlation matrices to
covariance matrices whose marginal variances are at most one.  This is the
form that directly contains the geometric problem.

\begin{corollary}[Gaussian maxima and the Simplex Mean Width Conjecture]
\label[corollary]{cor:mean-width}
Let $S$ be an $m\times m$ positive-semidefinite matrix with $S_{ii}\leq1$,
and let $Y\sim\cN(0,S)$.  Then
\begin{equation}\label{eq:expected-max}
  \E\max_iY_i
  \leq \E\max_i\xi_i^\triangle
  =\sqrt{\frac{m}{m-1}}\,\E\max_i Z_i,
\end{equation}
where $\xi^\triangle\sim\cN(0,G_\triangle)$.  Equality holds if and only if
$S=G_\triangle$.

Consequently, among convex hulls of $m$ points contained in the Euclidean unit
ball of $\R^d$, for every $d\geq m-1$, the regular $(m-1)$-simplex uniquely
maximizes mean width, up to rotation and relabeling.
\end{corollary}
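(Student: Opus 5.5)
We prove \Cref{cor:mean-width} by reducing to \Cref{cor:monotone} through the normalization \eqref{eq:intro-normalization}, after first passing from the general covariance $S$ to a correlation matrix.

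\emph{Step 1: reduce to unit variances.} Let $D=\diag(S_{11},\ldots,S_{mm})$ with $S_{ii}\leq1$. Set
\[
  G=S+(I_m-D),
\]
which is positive semidefinite and has unit diagonal, so it is a correlation matrix. Let $\xi\sim\cN(0,G)$ and realize it as $\xi=Y+W$, where $W\sim\cN(0,I_m-D)$ is independent of $Y$. Since $\E W=0$ and the maximum is convex, Jensen's inequality applied conditionally on $Y$ gives
\[
  \E\max_i\xi_i\geq\E\max_iY_i .
\]
(The expected maxima are finite by Gaussian integrability.)

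\emph{Step 2: compare $G$ with $G_\triangle$.} Put $R=\frac{m-1}{m}G+\frac1mJ$. Then $R$ is a correlation matrix with $R-J/m=\frac{m-1}{m}G\succeq0$, so \Cref{thm:orthant} applies. Realize
\[
  X=\sqrt{\tfrac{m-1}{m}}\,\xi+\sqrt{\tfrac1m}\,g\,\one,
\]
with $g$ a standard Gaussian independent of $\xi$. Since $\max_iX_i=\sqrt{(m-1)/m}\,\max_i\xi_i+g/\sqrt m$ and $\E g=0$, we obtain \eqref{eq:intro-mean-width}. \Cref{cor:monotone} with $\psi(t)=t$ gives $\E\M_X\leq\E\M_Z$, with strict inequality unless $R=I_m$, that is, unless $G=G_\triangle$. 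The same identity applied to $G_\triangle$, for which $R=I_m$, yields the equality in \eqref{eq:expected-max}. Combining this with Step 1 proves the inequality.

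\emph{Step 3: equality case.} This is the only delicate point. Suppose equality holds. Then $G=G_\triangle$ by Step 2, and the Jensen step must also be an equality. I would argue that this forces $D=I_m$, which gives $S=G_\triangle$. Suppose instead that $S_{kk}<1$ for some $k$. The variable $\xi_k^\triangle$ is not almost surely dominated by $\max_{j\neq k}\xi_j^\triangle$, because the regular simplex is nondegenerate: $G_\triangle$ has rank $m-1$ and no coordinate is a function of the others in a way that prevents $\xi_k$ from being the strict maximum. Given this, I would show that $Y_k$ is the unique argmax on an event of positive probability. On that event, adding the nondegenerate independent noise $W_k$ raises the conditional expectation of the maximum strictly, because $x\mapsto\E\max(x_k+W_k,\max_{j\neq k}x_j)$ exceeds $\max_j x_j$ whenever $\max_{j\neq k}x_j$ is close to $x_k$. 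To guarantee that such near-ties occur with positive probability, I would instead use the full-support property of $Y$ on its range. Concretely, the relation $S=G_\triangle-(I_m-D)$ forces $Y_k$ to be nonconstant unless $S_{kk}=0$; if $S_{kk}=0$, then $\E\max_iY_i\leq\E\max_{j\neq k}\xi^\triangle_j<\E\max_i\xi_i^\triangle$ by positivity of the maximum increment. In either case Jensen is strict, a contradiction.

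\emph{Step 4: geometry.} For points $x_1,\ldots,x_m$ in the unit ball of $\R^d$, the mean width of their convex hull equals a constant $c_d$ times $\E\max_i\ip{x_i}{\gamma}$ with $\gamma\sim\cN(0,I_d)$; this is $\E\max_iY_i$ with $S$ the Gram matrix of the points, so $S_{ii}\leq1$. For $d\geq m-1$ the regular simplex realizes $S=G_\triangle$. The equality case of Step 3 identifies the Gram matrix, and a Gram matrix determines the configuration up to orthogonal transformation. This gives uniqueness up to rotation and relabeling.
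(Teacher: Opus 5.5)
Your Steps 1, 2 and 4 follow the paper's argument: variance completion by independent diagonal noise, the common-Gaussian normalization with \Cref{cor:monotone} applied to $\psi(t)=t$, and Gram-matrix rigidity for the geometric statement. The gap is in Step 3. As written, it does not prove that the Jensen step is strict when some $S_{kk}<1$.

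You confine the strict gain to near-ties, and so you then need near-ties of $Y$ with positive probability. You never establish this. The appeal to ``the full-support property of $Y$ on its range'' is not an argument. The supporting claim that no coordinate of $\xi^\triangle$ is a function of the others is false: $G_\triangle\one=0$ gives $\sum_i\xi_i^\triangle=0$ almost surely. The $S_{kk}=0$ branch asserts $\E\max_iY_i\leq\E\max_{j\neq k}\xi_j^\triangle$ without justification. That case is in fact vacuous: $S_{kk}=0$ forces row $k$ of $S$ to vanish, whereas $G=G_\triangle$ forces $S_{kj}=-1/(m-1)$ for $j\neq k$.

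The near-tie detour is unnecessary, and the repair is short. If $W_k\sim\cN(0,1-S_{kk})$ is nondegenerate, then for all real $a,M$,
\[ \E\max(a+W_k,M)=M+\E(a+W_k-M)_+>M, \qquad \E\max(a+W_k,M)=a+\E(M-a-W_k)_+>a, \]
because a nondegenerate Gaussian charges every half-line. Combine this with Jensen over the independent $W_j$, $j\neq k$. This gives $\E[\max_i(Y_i+W_i)\mid Y]>\max_iY_i$ for every realization of $Y$. Since all expectations are finite, the Jensen step is strict in expectation.

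The paper avoids strict Jensen entirely. Once $G=G_\triangle$, the off-diagonal entries of $S$ equal $-1/(m-1)$, so $0\leq\one^{\trans}S\one=\sum_iS_{ii}-m\leq0$. This forces $S_{ii}=1$ for every $i$, hence $S=G_\triangle$.

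Your repaired route proves slightly more: variance completion is strict whenever some $S_{ii}<1$, independently of $G$. The paper's route is a one-line algebraic consequence of $G=G_\triangle$ and needs no analysis of the noise.
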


The equality case in \Cref{cor:mean-width} was obtained independently by
Su et al.~\cite[Corollary~4]{SuYangXuI2026}.

\subsection{Finite-energy AWGN consequence}

The simplex theorem also determines the optimal probability of error for the
AWGN channel when the number of channel uses is unrestricted.  Following
\cite[Definition~1]{PolyanskiyPoorVerdu2011}, fix $E\geq0$ and $N_0>0$ and
consider the AWGN sequence model $Y=X+Z$ over $\R^\infty$, with independent
noise coordinates of variance $N_0/2$.  An $(E,M,\epsilon)$ code consists of
$M$ equiprobable codewords $c_i\in\ell_2$ satisfying
$\norm{c_i}^2\leq E$, together with a decoder whose average error probability
is at most $\epsilon$.

This model imposes no constraint on the number of channel uses or nonzero
codeword coordinates.  For each fixed codebook, equal-prior maximum-likelihood
decoding is Bayes-optimal.  Let $P_{\mathrm c}^*(E,M)$ be the largest average
correct-decoding probability over admissible encoders and decoders, and let
$M^*(E,\epsilon)$ be the largest number of messages that can be communicated
with error at most $\epsilon$.  Let $E^*(M,\epsilon)$ be the minimum energy
needed to communicate $M$ messages with error at most $\epsilon$.  For
$M\geq2$, define
\begin{equation}\label{eq:pm-def}
  p_M(a)=\E\Phi(W+a)^{M-1},\qquad W\sim\cN(0,1).
\end{equation}

\begin{corollary}\label[corollary]{cor:finite-energy}
In the preceding model, the optimal correct-decoding probability is
\begin{equation}\label{eq:pc-opt}
  P_{\mathrm c}^*(E,M)
  =p_M\!\left(\sqrt{\frac{2EM}{(M-1)N_0}}\right),
  \qquad M\geq2.
\end{equation}
Hence, for $0\leq\epsilon<1$,
\begin{equation}\label{eq:M-star}
  M^*(E,\epsilon)
  =\max\!\left(
    \{1\}\cup
    \left\{M\geq2:
      p_M\!\left(\sqrt{\frac{2EM}{(M-1)N_0}}\right)
      \geq1-\epsilon
    \right\}
  \right).
\end{equation}
If $0<\epsilon<1-1/M$, there is a unique $a_{M,\epsilon}>0$ satisfying
$p_M(a_{M,\epsilon})=1-\epsilon$, and the minimum energy for $M$ messages at error \(\epsilon\) is
\begin{equation}\label{eq:E-min}
  E^{*}(M,\epsilon)
  =\frac{N_0}{2}\frac{M-1}{M}a_{M,\epsilon}^2.
\end{equation}
For $\epsilon\geq1-1/M$, $E^{*}(M,\epsilon)$ is zero, whereas for
$\epsilon=0$ it is infinite.
\end{corollary}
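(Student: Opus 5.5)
The plan is to deduce \Cref{cor:finite-energy} from \Cref{cor:wsc} (through \Cref{cor:mgf}), together with an explicit computation of the simplex's correct-decoding probability. The proof has three steps: an upper bound valid for every code, achievability by a scaled simplex, and the derived formulas for $M^*$ and $E^*$.

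\textbf{Upper bound.} Fix any admissible codebook $c_1,\ldots,c_M\in\ell_2$ with $\norm{c_i}^2\leq E$.
\begin{enumerate}
\item Since equal-prior ML decoding is Bayes-optimal, it suffices to bound its correct-decoding probability.
\item The codewords span a subspace $V$ of dimension at most $M$. Project $Y$ onto $V$. The orthogonal noise component is independent of the message and of the projected noise, so it is irrelevant for decoding. This reduces the problem to a finite-dimensional Gaussian channel with noise covariance $(N_0/2)I$. Rescaling by $\sqrt{2/N_0}$ gives unit noise and codewords of squared norm at most $2E/N_0$.
\item The ML correct-decoding probability is nondecreasing when the energy of a codeword is raised to exactly $2E/N_0$. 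I would prove this by embedding in one extra dimension, making all norms equal to $\sqrt{2E/N_0}$ without decreasing the performance.

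This monotonicity is the step I expect to be the main obstacle. Shortening a codeword can increase performance in low-dimensional geometry (for example, with antipodal points). The robust route is different. Add a fresh orthogonal coordinate for each codeword, giving $c_i'=(c_i,t_ie_i)$ with $t_i^2=2E/N_0-\norm{c_i}^2$. In the extended channel, the receiver can simply ignore the extra coordinates and decode as before, so the optimal correct-decoding probability does not decrease.
\item The lifted code now consists of $M$ equal-energy signals with $\lambda=\sqrt{2E/N_0}$. By \eqref{eq:intro-ml} and \Cref{cor:mgf}, which holds in any ambient dimension, its performance is at most that of the regular simplex with Gram matrix $G_\triangle$.
\end{enumerate}

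\textbf{Evaluating the simplex.} Apply \eqref{eq:intro-mgf-normalization} with $R=I_M$ and $\mu=\sqrt{M/(M-1)}\,\lambda$. Then
\[
\tfrac{e^{-\mu^2/2}}{M}\E e^{\mu\M_Z}=\E\Phi(W+\mu)^{M-1},
\]
where the equality is obtained by a Gaussian change of measure on the maximizing coordinate. This equals $p_M(\mu)$ with $\mu^2=2EM/((M-1)N_0)$. Achievability follows because the simplex embeds in $\ell_2$ using $M-1$ coordinates. This proves \eqref{eq:pc-opt}.

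\textbf{Consequences.} $p_M$ is continuous and strictly increasing, since $\Phi(w+a)$ increases in $a$. It satisfies $p_M(0)=1/M$ and $p_M(a)\to1$ as $a\to\infty$. Hence $a_{M,\epsilon}$ exists uniquely for $1/M<1-\epsilon<1$, and inverting the relation between $\mu$ and $E$ gives \eqref{eq:E-min}. For $M^*$, $P_{\mathrm c}^*(E,M)$ is nonincreasing in $M$ because a code can ignore messages. Taking the maximum over admissible $M$ gives \eqref{eq:M-star}, with $M=1$ always achievable. The edge cases are immediate: $\epsilon\geq1-1/M$ is achieved at zero energy by guessing, and $\epsilon=0$ is impossible since $p_M<1$ for finite $a$.
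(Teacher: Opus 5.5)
Your argument is correct in substance, and its upper bound takes a shorter route than the paper's.

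\textbf{Comparison with the paper.} In \Cref{prop:awgn-reduction} the paper proceeds in five steps:
\begin{enumerate}
\item it passes to sufficient statistics on the difference span $\operatorname{span}\{c_i-c_1\}$;
\item it pads each codeword with one extra coordinate $\sqrt{E-\norm{c_i}^2}$;
\item it removes the common component $w$ orthogonal to that span;
\item it applies \Cref{cor:mgf} to the resulting equal-radius configuration, of some radius $r\le\sqrt E$, in dimension at most $M-1$;
\item it then needs a separate Blackwell-degradation argument to show that simplex performance is nondecreasing in the radius.
\end{enumerate}
You keep the common component instead. After padding, every codeword has norm exactly $\sqrt E$. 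The identity \eqref{eq:ml-integral} holds for unit vectors in any $\R^d$, and \Cref{cor:mgf} is dimension-free. So you compare directly at $\lambda=\sqrt{2E/N_0}$, and the degradation step disappears. Radius monotonicity of the simplex even becomes a consequence: a radius-$r$ simplex plus an orthogonal offset of norm $\sqrt{E-r^2}$ is an energy-$E$ equal-norm code.

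The paper's detour buys a configuration in $\R^{M-1}$, where \Cref{cor:wsc} applies in its literal form, together with the minimal sufficient statistic. The corollary needs neither. One padding coordinate suffices, since the decoder ignores it whether or not the padded amplitudes differ; your $M$ fresh coordinates are harmless but unnecessary. Your evaluation of the simplex, via \eqref{eq:intro-mgf-normalization} with $R=I_M$ and a change of measure on the maximizing coordinate, is equivalent to the paper's translation to orthogonal representatives $Ae_i$.

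\textbf{Steps still to fill in.}
\begin{enumerate}
\item The formula \eqref{eq:M-star} asserts that a maximum exists. This requires the set $\{M\ge2: p_M(\sqrt{2EM/((M-1)N_0)})\ge1-\epsilon\}$ to be finite for every $\epsilon<1$. The monotonicity of $P_{\mathrm c}^*(E,M)$ in $M$ that you invoke does not give this, and it is not needed for the formula itself, which follows from the definition once the optimum is attained. What you need is $P_{\mathrm c}^*(E,M)\to0$ as $M\to\infty$. This is easy: since $a_M=\sqrt{2EM/((M-1)N_0)}\le\sqrt{4E/N_0}$, dominated convergence gives $\E\Phi(W+a_M)^{M-1}\le\E\Phi(W+\sqrt{4E/N_0})^{M-1}\to0$.
\item In the sequence model $Y\notin\ell_2$ almost surely, so ``project $Y$ onto $V$'' and ``the orthogonal noise component'' are not defined as written. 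You need to define the statistics $\ip{Y}{u_j}$ as almost-sure limits of partial sums, chosen as one version common to all hypotheses. You must then check that the likelihood ratios $dP_i/dP_1$, which exist by Cameron--Martin, are functions of these statistics, as in \Cref{app:awgn}.
\end{enumerate}
Both are routine, but the first is part of what the statement asserts.
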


\paragraph{Equality in the finite-energy bound.}
For $E>0$, Su et al.\ independently characterize all optimizers
\cite[Theorem~3(b)]{SuYangXuI2026}: an admissible $M$-message codebook attains
\eqref{eq:pc-opt} if and only if
\[
  \sum_{i=1}^M c_i=0,\qquad
  \norm{c_i}^2=E,\qquad
  \ip{c_i}{c_j}=-\frac{E}{M-1}\quad(i\neq j).
\]
Thus the optimizer is a centered regular simplex of circumradius $\sqrt E$,
up to isometry of its span and relabeling, and every optimal codeword exhausts
the energy allowance.  This equality characterization supplements the
optimal-value formula above.

\begin{remark}[Relation to pulse-position modulation]
Orthogonal pulse-position modulation (PPM) has codewords
$\sqrt E e_1,\ldots,\sqrt E e_M$. After subtracting its common center, it becomes
a regular simplex of squared radius $E(M-1)/M$.  Its correct-decoding
probability is therefore
\[
  p_M\!\left(\sqrt{\frac{2E}{N_0}}\right).
\]
Equivalently, the full-energy simplex at energy $E$ has the same performance
as orthogonal PPM at energy $EM/(M-1)$.  \Cref{cor:wsc} then explains why the 
orthogonal PPM construction in \cite{PolyanskiyPoorVerdu2011} nearly obtains \(E^{*}(M, \epsilon)\).
\end{remark}

We now turn to the proof of \Cref{thm:orthant}, beginning with the
passage from the Gram matrix to the transformed covariance $R\succeq J/m$.

\section{From Gaussian classification to a covariance comparison}
\label{sec:classification}

We now establish the reduction transforming the original coding
problem with signal vectors $x_1,\ldots,x_m$ to \Cref{thm:orthant}.

\subsection{The maximum-likelihood integral}

Let $x_1,\ldots,x_m$ be unit vectors in $\R^d$.  Conditional on message $i$,
the channel output \(Y\) has density
\[
  p_i(y)=(2\pi)^{-d/2}
  \exp\!\left(-\frac12\|y-\lambda x_i\|^2\right).
\]
Since $\|x_i\|=1$,
\[
  p_i(y)
  =(2\pi)^{-d/2}e^{-\|y\|^2/2-\lambda^2/2}
    e^{\lambda\ip{y}{x_i}}.
\]
Only the last factor depends on $i$, so maximum-likelihood decoding selects an
index maximizing $\ip{y}{x_i}$.  Let $D_i \subseteq \R^d$ be the decision region assigned to
message $i$, using any measurable rule on ties.  The regions form a partition,
and $p_i(y)=\max_jp_j(y)$ on $D_i$.  Hence
\begin{align*}
  P_{\mathrm c}(x_1,\ldots,x_m;\lambda)
  &=\frac1m\sum_{i=1}^m\int_{D_i}p_i(y)\,dy\\
  &=\frac{e^{-\lambda^2/2}}{m}
    \int_{\R^d}e^{\lambda\max_i\ip{y}{x_i}}\,d\gamma_d(y).
\end{align*}
If $Z\sim\cN(0,I_d)$ and $\xi_i=\ip{Z}{x_i}$, then
$\xi\sim\cN(0,G)$ for $G=(\ip{x_i}{x_j})$, and we get
\begin{equation}\label{eq:ml-integral}
  P_{\mathrm c}(x_1,\ldots,x_m;\lambda)
  =\frac{e^{-\lambda^2/2}}{m}
    \E\exp\!\left\{\lambda\max_i\xi_i\right\}.
\end{equation}
This also shows directly that the average success probability is independent
of the rule used on tie sets.  The code has now been reduced to its score
covariance $G$.  We next change that covariance so that the regular simplex is
represented by independent coordinates.

\subsection{Adding a common Gaussian random variable}

The variance of the added variable is determined by the simplex.  For the
regular-simplex Gram matrix, every off-diagonal entry equals $-1/(m-1)$.
Adding a common Gaussian variable of variance $1/(m-1)$ therefore cancels those
off-diagonal covariances, and a final rescaling restores unit marginal
variances.  We apply the same operation to every other Gram matrix competing with the simplex.

Set
\begin{equation}\label{eq:alpha}
  \alpha=\frac{m-1}{m}.
\end{equation}
For a correlation matrix $G$, define
\begin{equation}\label{eq:R-from-G}
  R=\alpha G+(1-\alpha)J
   =\frac{m-1}{m}G+\frac1mJ.
\end{equation}
Then $R$ is a correlation matrix and $R-J/m=\alpha G\succeq0$.
Conversely, if a correlation matrix $R$ satisfies $R-J/m\succeq0$, then
\begin{equation}\label{eq:G-from-R}
  G=\alpha^{-1}\left(R-\frac1mJ\right)
\end{equation}
is positive semidefinite with unit diagonal.  Thus the hypothesis in
\Cref{thm:orthant} is exactly the image of the class of score covariance
matrices. Furthermore \(G_{\triangle}\) is transformed to \(I_m\) under this mapping.

For a probabilistic interpretation of this transform let $\xi\sim\cN(0,G)$ and let
$B\sim\cN(0,1/(m-1))$ be independent.  Define
\begin{equation}\label{eq:X-common}
  X_i=\sqrt\alpha\,(\xi_i+B),
  \qquad 1\leq i\leq m.
\end{equation}
Then $X\sim\cN(0,R)$ since 
\[
  \operatorname{Cov}(X)
  =\alpha\left(G+\frac1{m-1}J\right)
  =R.
\]
Because the same variable $B$ is added to every coordinate,
\[
  \max_iX_i=\sqrt\alpha\left(\max_i\xi_i+B\right).
\]
With $\lambda=\sqrt\alpha\,\mu$, we therefore obtain
\begin{align}
 e^{-\mu^2/2}\E e^{\mu\max_iX_i}
 &=e^{-\mu^2/2}\E e^{\lambda\max_i\xi_i}\E e^{\lambda B}\notag\\
 &=e^{-\lambda^2/2}\E e^{\lambda\max_i\xi_i}.
 \label{eq:normalized-mgf}
\end{align}

\begin{remark}[Role of the semidefinite condition]
\label[remark]{rem:semidefinite-condition}
The condition $R\succeq J/m$ is equivalent to
\begin{equation}\label{eq:psd-quadratic}
  u^{\trans}Ru
  \geq\frac1m\left(\sum_{i=1}^m u_i\right)^2
  \qquad(u\in\R^m).
\end{equation}
It does not impose an entrywise sign pattern: matrices in this class may have
both positive and negative correlations, as
\Cref{rem:unequal-thresholds} shows.  It records exactly that $R$ is the image
of a score covariance under \eqref{eq:R-from-G}.  The transformation may also
change rank; for example, $G_\triangle$ has rank $m-1$, whereas its image
$I_m$ has full rank.  Whenever the proof uses a Gram representation of the
matrix in \eqref{eq:G-from-R}, we choose unit vectors in
$\R^{\rank G}$, so no ambient dimension must be fixed in advance.
\end{remark}

The reduction is complete: by \eqref{eq:ml-integral} and
\eqref{eq:normalized-mgf}, the Weak Simplex Conjecture (\Cref{cor:wsc}) follows once
\Cref{thm:orthant} is proved for the covariance class $R\succeq J/m$.  The
next section supplies the Gaussian product inequality used in that proof.

\section{A Gaussian product inequality}\label{sec:centered-input}

The event $\{X\leq c\one\}$ is a product of one-dimensional indicators
$\ind_{\{X_i\leq c\}}$, so a Gaussian product inequality is a natural
starting point.  The key result we use is a Gaussian product inequality that applies to bounded log-concave factors that
are centered with respect to the standard Gaussian measure.  This centering
hypothesis will be the only obstruction to using the indicators directly, and \Cref{sec:alignment} will address how to resolve that obstruction.

A function $f:\R\to[0,\infty)$ is log-concave if
\[
  f((1-\theta)x+\theta y)
  \geq f(x)^{1-\theta}f(y)^\theta
  \qquad(x,y\in\R,\ 0\leq\theta\leq1).
\]
We allow $f$ to vanish; in particular, indicators of intervals and the
truncated exponentials used below are log-concave.

\begin{lemma}
\label[lemma]{thm:centered-product}
Let $R$ be an $m\times m$ correlation matrix and let $X\sim\cN(0,R)$.  For
each $i$, let $f_i:\R\to[0,\infty)$ be bounded and log-concave, with positive
Gaussian integral, and suppose that
\begin{equation}\label{eq:zero-first-moment}
  \int_{\R}z f_i(z)\,d\gamma_1(z)=0.
\end{equation}
Then
\begin{equation}\label{eq:centered-product}
  \E\prod_{i=1}^m f_i(X_i)
  \geq \prod_{i=1}^m \E f_i(X_i)
  =\prod_{i=1}^m\int_{\R}f_i\,d\gamma_1.
\end{equation}
The covariance matrix $R$ may be singular.
\end{lemma}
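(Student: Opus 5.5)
The plan is to prove the inequality directly by a \emph{normalized self-convolution} argument. It combines the Gaussian correlation inequality (Royen) for even quasi-concave factors with a central limit theorem. Centering is exactly what the CLT needs. Log-concavity is what makes Royen applicable after symmetrization.

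\textbf{Setup.} Write $Z_i=\int f_i\,d\gamma_1>0$ and consider the functional
\[
F(f_1,\ldots,f_m)=\frac{\E\prod_i f_i(X_i)}{\prod_i Z_i}.
\]
Normalizing, $g_i=f_i\varphi/Z_i$ is a probability density with mean zero. Since $f_i$ is bounded and log-concave, $g_i$ is more log-concave than $\gamma_1$, so $g_i$ has variance $\sigma_i^2\leq 1$ (Brascamp--Lieb).

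\textbf{Doubling step.} Let $X,Y$ be independent copies of $\cN(0,R)$, and put $U=(X+Y)/\sqrt2$ and $V=(X-Y)/\sqrt2$. These are again independent $\cN(0,R)$ vectors. Then
\[
\Bigl(\E\prod_i f_i(X_i)\Bigr)^2=\E\prod_i h_{i,U_i}(V_i),\qquad h_{i,u}(v)=f_i\!\left(\tfrac{u+v}{\sqrt2}\right)f_i\!\left(\tfrac{u-v}{\sqrt2}\right).
\]
For fixed $u$, the map $v\mapsto h_{i,u}(v)$ is even and log-concave, so its superlevel sets are symmetric intervals. Conditioning on $U$ and using the layer-cake representation, the Gaussian correlation inequality for symmetric slabs gives
\[
\E_V\prod_i h_{i,U_i}(V_i)\ge\prod_i\E h_{i,U_i}(V_i)=\prod_i f_i^{(2)}(U_i),\qquad f_i^{(2)}(u)=\int h_{i,u}\,d\gamma_1.
\]
A direct computation shows that $f_i^{(2)}\varphi$ is proportional to the density of $(A+B)/\sqrt2$ with $A,B$ i.i.d.\ $\sim g_i$. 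Also $\int f_i^{(2)}d\gamma_1=Z_i^2$, and $f_i^{(2)}$ is again bounded, log-concave and centered (Prékopa). Hence $F(f)^2\ge F(f^{(2)})$. Iterating,
\[
F(f)^{2^k}\ge F(f^{(2^k)}),
\]
where $f_i^{(2^k)}\varphi/Z_i^{2^k}$ is the density of the normalized sum of $2^k$ i.i.d.\ copies of a $g_i$-distributed variable.

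\textbf{Limit.} Because each $g_i$ is centered with variance $\sigma_i^2\le1$, the CLT sends these normalized factors to $e^{-\beta_i z^2/2}\sqrt{1+\beta_i}$, where $\beta_i=\sigma_i^{-2}-1\ge0$. For such Gaussian factors,
\[
F=\prod_i(1+\beta_i)^{1/2}\det(I+B^{1/2}RB^{1/2})^{-1/2}\ge1,\qquad B=\diag(\beta_i),
\]
by Hadamard's inequality, since the diagonal entries of $I+B^{1/2}RB^{1/2}$ are $1+\beta_i$. So $\lim_k F(f^{(2^k)})\ge1$, and therefore $F(f)\ge1$. Singular $R$ needs no separate treatment, since the argument never inverts $R$; alternatively one can approximate by $(1-\epsilon)R+\epsilon I$.

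\textbf{Main obstacle.} The delicate step is justifying $F(f^{(2^k)})\to F(\text{Gaussian limit})$. We need convergence of $\E\prod_i\tilde f_i^{(k)}(X_i)$ for the normalized ratios $\tilde f_i^{(k)}=f_i^{(2^k)}/Z_i^{2^k}$, not just weak convergence of the measures, and these ratios may be unbounded relative to $\varphi$. I would first use that log-concave densities that are more log-concave than $\gamma_1$ have uniform Gaussian-type tail bounds. This gives $\tilde f_i^{(k)}(z)\le C e^{\delta z^2}$ with $\delta$ small enough for integrability under $X$. With that bound, a local CLT (pointwise convergence of log-concave densities) plus dominated convergence completes the argument.

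A fallback is to cite the centered Gaussian forward--reverse Brascamp--Lieb theorem of Milman, Nakamura and Tsuji \cite{MilmanNakamuraTsuji2026}. There, the centered extremizers are Gaussian, and the Gaussian case reduces to the same Hadamard computation.
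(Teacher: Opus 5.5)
Your proof is correct and has the same skeleton as the paper's direct proof in \Cref{app:centered-product}: the sum--difference rotation, the symmetric-slab inequality applied to even log-concave factors through the layer-cake formula, iteration of the normalized self-convolution, and the CLT. The difference is in the limit step, and the paper avoids the step you flag as the main obstacle.

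Since $Q_r\le Q_0^{2^r}$, any $Q_0<1$ forces $Q_r\to0$. So one needs only $\liminf_rQ_r>0$, not the value of the limit. Your own final deduction uses nothing beyond positivity, so the Hadamard computation, and with it the bound $\sigma_i^2\le1$, is not needed.

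For positive-definite $R$, the paper gets positivity from weak convergence alone. The density of $\cN(0,R)$ with respect to $\gamma_1^{\otimes m}$ is bounded below by some $c_{R,L}>0$ on $[-L,L]^m$. Hence $Q_r\ge c_{R,L}\prod_i\Pp\{|2^{-r/2}\sum_\ell Y_{i,\ell}|\le L\}$, and the right side converges to a positive number. Singular $R$ is then reached through $(1-\varepsilon)R+\varepsilon I_m$ and dominated convergence, using that $\prod_if_i(x_i)$ is almost surely continuous at $X$.

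Your route instead needs a local limit theorem for log-concave densities and a uniform dominating function. Both are available: each $\tilde f_i^{(k)}\varphi$ is $1$-strongly log-concave with fixed variance $\sigma_i^2$, so its mode and maximal height are uniformly controlled, which gives $\tilde f_i^{(k)}(z)\le Ce^{c|z|}$. But these are extra facts you would have to cite or prove. In exchange, your version treats singular $R$ with no regularization and exhibits the Gaussian limit explicitly. That limit is the same Hadamard computation that evaluates the forward--reverse Brascamp--Lieb constant in \Cref{app:frbl-specialization}.

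Finally, after the layer-cake reduction you need only the \v{S}id\'ak--Khatri inequality for symmetric slabs, not Royen's theorem. The paper proves that inequality directly by conditioning and induction.
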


To interpret \eqref{eq:zero-first-moment}, divide $f_i$ by its Gaussian
integral.  The measure
\[
  \frac{f_i\,d\gamma_1}{\int f_i\,d\gamma_1}
\]
is then a probability measure, and \eqref{eq:zero-first-moment} says precisely
that this measure has mean zero.  The lemma therefore compares the correlated
product with the independent product under a coordinatewise centering
condition.

The product inequality itself does not use the condition $R\succeq J/m$ from
\Cref{thm:orthant}.  That condition enters only in \Cref{sec:alignment}, where
we need it to supply a Gram representation of
$\alpha^{-1}(R-J/m)$.

\Cref{thm:centered-product} is a specialization of the Gaussian
forward--reverse Brascamp--Lieb theorem of Milman, Nakamura, and Tsuji
\cite[Theorem~1.10]{MilmanNakamuraTsuji2026}.  The exact parameter assignment
is given in \Cref{app:frbl-specialization}.  For self-containment,
\Cref{app:centered-product} also gives a direct proof of precisely this
one-dimensional-factor specialization, using the Gaussian correlation inequality, normalized self-convolution, and the central limit
theorem.  The direct argument additionally supplies the strict form needed in
\Cref{rem:strict-common-threshold}.  Su et al.\ independently obtain the same
strict product conclusion by strengthening the first self-convolution step
with a Royen-based strict rectangle inequality
\cite[Sections~V-A--V-B]{SuYangXuI2026}.

For the raw indicators, the two sides of the product inequality are already
the two sides of \eqref{eq:orthant-main}.  Indeed, for
$f_i(z)=\ind_{(-\infty,c]}(z)$,
\[
  \E\prod_{i=1}^m f_i(X_i)=\Pp\{X\leq c\one\},
  \qquad
  \prod_{i=1}^m\int f_i\,d\gamma_1=\Phi(c)^m.
\]
However, the required first moment is
\[
  \int zf_i(z)\,d\gamma_1(z)=-\varphi(c)\neq0.
\]
Multiplying by an exponential can restore the missing first moment, but the
same weight shifts the mean of the multivariate Gaussian vector $X$.  The
inequality \eqref{eq:centered-product} is therefore available only after the
centering condition \eqref{eq:zero-first-moment} has been reconciled with the
target event $\{X\leq c\one\}$.  That compatibility problem is the subject
of the next section.

\section{Adaptive threshold alignment}\label{sec:alignment}

Fix a target threshold $c\in\R$.  The raw indicators of the event
$\{X\leq c\one\}$ fail the centering hypothesis in
\Cref{thm:centered-product}.  We therefore replace the $i$th indicator by a
truncated exponential
\[
  f_i(z)=e^{a_i z}\ind_{(-\infty,b_i]}(z).
\]
The parameter $a_i$ must center the one-dimensional factor, while the endpoint
$b_i$ must compensate for the joint Gaussian mean displacement $Ra$ created by
all the tilts.  So our construction will have a one-dimensional part and a coupled
part.  We first derive the two required identities, then solve the
one-dimensional centering equation, and finally choose all endpoints by a
strictly concave variational problem.

\subsection{Required conditions for the tilted factors}

For vectors $a\in(0,\infty)^m$ and $b\in\R^m$, consider
\begin{equation}\label{eq:tilted-factors}
  f_i(z)=e^{a_i z}\ind_{(-\infty,b_i]}(z).
\end{equation}
The Gaussian change-of-measure identity below describes the multivariate effect
of these parameters.  We include the short proof that also covers the singular covariance case.

\begin{lemma}\label[lemma]{lem:gaussian-shift}
Let $X\sim\cN(0,R)$ for a positive-semidefinite matrix $R$.  For every
$a\in\R^m$ and every nonnegative Borel function
$F:\R^m\to[0,\infty]$,
\begin{equation}\label{eq:gaussian-shift}
  \E\!\left[e^{a^{\trans}X}F(X)\right]
  =e^{\frac12a^{\trans}Ra}\E F(X+Ra).
\end{equation}
\end{lemma}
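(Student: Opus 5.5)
We will prove the identity by reducing to an independent standard Gaussian vector, so that the possibly singular covariance causes no difficulty. Write $R=AA^{\trans}$ for some $m\times k$ matrix $A$; for instance take $A=R^{1/2}$ with $k=m$. Let $W\sim\cN(0,I_k)$, so that $X\stackrel{d}{=}AW$. Since both sides of \eqref{eq:gaussian-shift} depend only on the law of $X$, it suffices to prove
\[
  \E\!\left[e^{a^{\trans}AW}F(AW)\right]=e^{\frac12a^{\trans}Ra}\E F(AW+Ra).
\]

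Set $v=A^{\trans}a\in\R^k$. Then $a^{\trans}AW=\ip{v}{W}$, $\norm v^2=a^{\trans}Ra$, and $Av=Ra$. The left side equals $\int_{\R^k}e^{\ip vw}F(Aw)\,d\gamma_k(w)$. Completing the square in the density gives
\[
  e^{\ip vw}(2\pi)^{-k/2}e^{-\norm w^2/2}=e^{\norm v^2/2}(2\pi)^{-k/2}e^{-\norm{w-v}^2/2}.
\]
The left side therefore equals $e^{\norm v^2/2}\int F(Aw)\,d\gamma_k(w-v)$. Substituting $w=u+v$ turns this into $e^{\frac12a^{\trans}Ra}\E F(AW+Av)=e^{\frac12a^{\trans}Ra}\E F(X+Ra)$.

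The argument involves only nonnegative integrands, so the substitution and the equalities are justified by Tonelli's theorem, with both sides allowed to equal $+\infty$. No integrability hypothesis is needed, and measurability of $F\circ A$ is clear.

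There is no real obstacle. The one point requiring care is that $R$ may be singular, so $X$ need not have a density on $\R^m$. Passing through the factorization $R=AA^{\trans}$ handles this uniformly and avoids any limiting argument such as replacing $R$ by $R+\varepsilon I_m$.
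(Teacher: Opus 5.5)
Your proof is correct and is essentially the paper's argument: the paper also writes $X=TZ$ with $TT^{\trans}=R$, sets $\theta=T^{\trans}a$, completes the square in the standard Gaussian density on $\R^d$, and changes variables, which handles singular $R$ just as you describe. Your explicit appeal to Tonelli's theorem to allow infinite values is a small clarification that the paper leaves implicit.
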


\begin{proof}
Choose a matrix $T$ and a standard Gaussian vector $Z\sim\cN(0,I_d)$ such
that $X=TZ$ and $TT^{\trans}=R$.  With $\theta=T^{\trans}a$, let
$\varphi_d$ denote the standard Gaussian density on $\R^d$.  Completing the square gives
\[
  e^{\theta^{\trans}z-\|\theta\|^2/2}\varphi_d(z)=\varphi_d(z-\theta).
\]
A change of variables therefore yields
\begin{align*}
 \E\!\left[e^{a^{\trans}X-\frac12a^{\trans}Ra}F(X)\right]
 &=\E\!\left[e^{\theta^{\trans}Z-\|\theta\|^2/2}F(TZ)\right]\\
 &=\E F(T(Z+\theta))
 =\E F(X+Ra),
\end{align*}
which is equivalent to \eqref{eq:gaussian-shift}.
\end{proof}

Two identities are required.  First, each coordinate must satisfy the centering
hypothesis of \Cref{thm:centered-product}:
\begin{equation}\label{eq:factor-first-moment}
  \int_{\R}z e^{a_i z}\ind_{(-\infty,b_i]}(z)\,d\gamma_1(z)=0
  \qquad(1\leq i\leq m).
\end{equation}
Second, after the Gaussian change of measure, the weighted event must become
the target event $\{X\leq c\one\}$.  Applying \Cref{lem:gaussian-shift} with
$F(x)=\ind_{\{x\leq b\}}$ gives
\[
  \E\!\left[e^{a^{\trans}X}\ind_{\{X\leq b\}}\right]
  =e^{\frac12a^{\trans}Ra}\Pp\{X\leq b-Ra\}.
\]
We must therefore choose the endpoints so that
\begin{equation}\label{eq:threshold-compatibility}
  b-Ra=c\one.
\end{equation}
Under this condition,
\begin{equation}\label{eq:event-shift}
  \E\!\left[e^{a^{\trans}X}\ind_{\{X\leq b\}}\right]
  =e^{\frac12a^{\trans}Ra}\Pp\{X\leq c\one\}.
\end{equation}

Completing the square also gives the Gaussian integral of an individual
factor:
\[
  \int_{-\infty}^{b_i}e^{a_i z}\,d\gamma_1(z)
  =e^{a_i^2/2}\Phi(b_i-a_i).
\]
Consequently, once \eqref{eq:factor-first-moment} and
\eqref{eq:threshold-compatibility} hold,
\Cref{thm:centered-product} and \eqref{eq:event-shift} give
\begin{equation}\label{eq:tilt-reduction}
  \Pp\{X\leq c\one\}
  \geq
  \exp\!\left(\frac12a^{\trans}(I_m-R)a\right)
  \prod_{i=1}^m\Phi(b_i-a_i).
\end{equation}
It remains to choose $a$ and $b$ so that both required identities hold and
\begin{equation}\label{eq:tilt-reduction-target}
  \exp\!\left(\frac12a^{\trans}(I_m-R)a\right)
  \prod_{i=1}^m\Phi(b_i-a_i)
  \geq \Phi(c)^m.
\end{equation}
\Cref{sec:one-d-calc} solves the one-dimensional centering condition and
identifies the potential used to couple the coordinates.
\Cref{sec:variational} then enforces threshold compatibility and proves
\eqref{eq:tilt-reduction-target} simultaneously.

\subsection{The one-dimensional centered factors}\label{sec:one-d-calc}
The first condition in \eqref{eq:factor-first-moment} is one-dimensional and
can be solved explicitly.  For $a,b\in\R$, let
$f_{a,b}(z)=e^{az}\ind_{(-\infty,b]}(z)$ and set
\[
  s=b-a.
\]
\(s\) will allow us to parameterize the family of \(f_{a,b}\) satisfying \eqref{eq:factor-first-moment}.

\begin{lemma}\label[lemma]{lem:local-centering}
For $a,b,s$ as above,
\begin{align}
 \int f_{a,b}\,d\gamma_1
 &=e^{a^2/2}\Phi(s),\label{eq:local-mass}\\
 \int zf_{a,b}(z)\,d\gamma_1(z)
 &=e^{a^2/2}\bigl(a\Phi(s)-\varphi(s)\bigr).
 \label{eq:local-first-moment}
\end{align}
Consequently, the first moment vanishes if and only if
\begin{equation}\label{eq:local-centering-curve}
  a=r(s):=\frac{\varphi(s)}{\Phi(s)},
  \qquad
  b=H(s):=s+r(s).
\end{equation}
\end{lemma}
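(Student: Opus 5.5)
The plan is to complete the square in the Gaussian density, reducing both integrals to standard-normal quantities after the substitution $w=z-a$. Then we read off the centering condition from the sign of the first-moment bracket.

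\emph{Mass.} Since $e^{az}\varphi(z)=e^{a^2/2}\varphi(z-a)$, we have
\[
  \int f_{a,b}\,d\gamma_1=\int_{-\infty}^{b}e^{az}\varphi(z)\,dz
  =e^{a^2/2}\int_{-\infty}^{b-a}\varphi(w)\,dw=e^{a^2/2}\Phi(s).
\]
This gives \eqref{eq:local-mass}.

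\emph{First moment.} With the same substitution,
\[
  \int zf_{a,b}(z)\,d\gamma_1(z)=e^{a^2/2}\int_{-\infty}^{s}(w+a)\varphi(w)\,dw.
\]
Splitting the integrand into $w\varphi(w)$ and $a\varphi(w)$, the first piece integrates to $-\varphi(s)$ because $\varphi'(w)=-w\varphi(w)$ and $\varphi(-\infty)=0$. The second piece gives $a\Phi(s)$. Together these yield \eqref{eq:local-first-moment}. All integrals converge absolutely, because $e^{az}|z|\varphi(z)$ is integrable on half-lines.

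\emph{Centering curve.} The factor $e^{a^2/2}$ is positive, so the first moment vanishes if and only if $a\Phi(s)=\varphi(s)$. Since $\Phi(s)>0$ for every real $s$, this is equivalent to $a=r(s)$. Given $s=b-a$, that in turn is equivalent to $b=s+r(s)=H(s)$.

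For the converse direction, note that $(a,b)\mapsto(a,s)$ is a bijection of $\R^2$. Hence every centered pair $(a,b)$ arises from exactly one $s$ in this way, namely $s=b-a$.

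There is no real obstacle here. The only care needed is the bookkeeping in the shift $w=z-a$, so that the upper limit becomes $b-a=s$ in both integrals. One should also note that $r(s)>0$ automatically, which is consistent with the standing assumption $a\in(0,\infty)^m$ from \eqref{eq:tilted-factors}.
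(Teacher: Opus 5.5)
Your proof is correct and follows essentially the paper's route: complete the square via $e^{az}\varphi(z)=e^{a^2/2}\varphi(z-a)$ to get the mass, then use $\Phi(s)>0$ to reduce vanishing of the first moment to $a\Phi(s)=\varphi(s)$. The one small difference is in \eqref{eq:local-first-moment}: the paper differentiates the mass identity in $a$ with $b$ fixed, whereas you integrate $(w+a)\varphi(w)$ directly after the shift, and the two computations are equally short.
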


\begin{proof}
The identity
$e^{az}\varphi(z)=e^{a^2/2}\varphi(z-a)$ gives
\eqref{eq:local-mass}.  Differentiating that identity with respect to $a$ while
keeping $b$ fixed gives \eqref{eq:local-first-moment}.  Since $\Phi(s)>0$, the
last assertion follows by solving $a\Phi(s)-\varphi(s)=0$.
\end{proof}

Thus a single parameter $s$ determines both the tilt and the endpoint.
For $Z\sim\cN(0,1)$,
\[
  r(s)=-\E[Z\mid Z\leq s],
  \qquad
  H(s)=\E[s-Z\mid Z\leq s],
\]
so both quantities are positive.

The coupled construction is most conveniently written in terms of the endpoint
$b=H(s)$.  We therefore introduce a potential whose derivative with respect to
$b$ is the required tilt.  Along the centered family, define
\begin{equation}\label{eq:F-def}
  \cF(H(s))
  =\log\!\int f_{r(s),H(s)}\,d\gamma_1
  =\log\Phi(s)+\frac12r(s)^2.
\end{equation}
The map $H$ is a bijection from $\R$ onto $(0,\infty)$, so this definition
determines $\cF$ on the entire endpoint domain.  Differentiating
\eqref{eq:F-def} with respect to $s$ gives
\[
  \frac{d}{ds}\cF(H(s))
  =r(s)+r(s)r'(s)
  =r(s)H'(s),
\]
and hence
\[
  \cF'(H(s))=r(s).
\]
Thus the derivative of the logarithmic mass of the centered factor is exactly
its exponential tilt.  The remaining properties are collected below and
proved in \Cref{app:truncated-functions}; \Cref{fig:tilt-functions} displays
the three functions separately.

\begin{proposition}\label[proposition]{prop:tilt-functions}
The function $r$ is a strictly decreasing bijection from $\R$ onto
$(0,\infty)$, and $H$ is a strictly increasing bijection from $\R$ onto
$(0,\infty)$.  The function $\cF:(0,\infty)\to(-\infty,0)$ defined by
\eqref{eq:F-def} is a strictly increasing, strictly concave bijection.  Moreover,
\begin{equation}\label{eq:F-properties}
  \cF'(H(s))=r(s)
  \qquad(s\in\R).
\end{equation}
\end{proposition}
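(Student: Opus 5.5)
The plan is to derive everything from the inverse Mills ratio $r=\varphi/\Phi$ and two classical facts about it. First, $r'(s)=-r(s)(s+r(s))=-r(s)H(s)$, which follows by differentiating $\varphi/\Phi$ with $\varphi'(s)=-s\varphi(s)$. Second, $0<r'(s)+1<1$, i.e.\ $0<H'(s)<1$. The latter is the statement that $\operatorname{Var}(Z\mid Z\leq s)=1+r'(s)-\dots$; more precisely, with $Z\sim\cN(0,1)$, a direct computation gives $\operatorname{Var}(Z\mid Z\le s)=1-sr(s)-r(s)^2=1-rH$. Hence $H'(s)=1+r'(s)=1-r(s)H(s)=\operatorname{Var}(Z\mid Z\le s)$. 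This lies in $(0,1)$: it is positive since the truncated law is nondegenerate, and it is less than $1$ by the Brascamp--Lieb/Prékopa fact that conditioning a Gaussian on a convex set reduces variance (or, elementarily, because $rH>0$, which follows since both $r$ and $H=\E[s-Z\mid Z\le s]$ are positive, as noted after \Cref{lem:local-centering}).

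The monotonicity and range statements follow from these facts. Since $r'=-rH<0$, $r$ is strictly decreasing. Its limits are $r(s)\to0$ as $s\to\infty$ (because $\varphi\to0$ and $\Phi\to1$) and $r(s)\to\infty$ as $s\to-\infty$ (by the Mills ratio asymptotic $\Phi(s)\sim\varphi(s)/|s|$, giving $r(s)\sim|s|$). Continuity then makes $r$ a bijection onto $(0,\infty)$. Similarly, $H'=\operatorname{Var}(Z\mid Z\le s)>0$, so $H$ is strictly increasing. As $s\to\infty$ we have $H(s)\to\infty$ since $r\to0$. As $s\to-\infty$, the refined expansion $r(s)=-s-1/s+O(|s|^{-3})$ gives $H(s)=r(s)+s\to0^+$; alternatively, $0<H(s)=\E[s-Z\mid Z\le s]$ and the conditional law of $s-Z$ concentrates at $0$ at scale $1/|s|$. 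Together with $H>0$, this shows $H$ is a bijection onto $(0,\infty)$.

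For $\cF$, the identity \eqref{eq:F-properties} was already derived in the text from $\frac{d}{ds}\cF(H(s))=r+rr'=r(1+r')=rH'$, and it is legitimate because $H$ is a $C^1$ diffeomorphism ($H'>0$). This gives $\cF'=r\circ H^{-1}>0$, so $\cF$ is strictly increasing. Moreover, $\cF''(H(s))=r'(s)/H'(s)<0$, since $r'<0<H'$, so $\cF$ is strictly concave. For the range, I would compute the limits of $\log\Phi(s)+\tfrac12r(s)^2$. As $s\to+\infty$ it tends to $0$. As $s\to-\infty$, $\log\Phi(s)=-s^2/2-\log|s|-\tfrac12\log(2\pi)+o(1)$ and $\tfrac12r^2=\tfrac12s^2+1+o(1)$, so the sum tends to $-\infty$ like $-\log|s|$. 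By monotonicity and continuity, $\cF$ maps onto $(-\infty,0)$. Negativity also follows directly from $\cF(H(s))=\log\E[e^{r Z}\ind_{Z\le H}]$, or from $\cF<\lim_{+\infty}\cF=0$ by strict increase.

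I expect the main obstacle to be the boundary behaviour as $s\to-\infty$, namely showing $H\to0$ and $\cF\to-\infty$. This requires second-order Mills ratio asymptotics rather than just the first-order $r\sim|s|$. I would handle it by using the standard bounds $\frac{|s|}{s^2+1}\varphi(s)\le\Phi(s)\le\frac{\varphi(s)}{|s|}$ for $s<0$, which give $|s|\le r(s)\le|s|+1/|s|$. These bounds imply $0<H(s)\le1/|s|$ and $\tfrac12r^2\ge\tfrac12s^2$, and they suffice for both limits.
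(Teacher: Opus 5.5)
Your proposal is correct and follows essentially the paper's route in \Cref{app:truncated-functions}: $r'=-rH$, $H'=\operatorname{Var}(Z\mid Z\le s)>0$, $\cF'(H(s))=r(s)$ and $\cF''(H(s))=r'(s)/H'(s)<0$, with Mills bounds handling $s\to-\infty$. The differences are minor: you get $H(s)\to0$ from $0<H(s)\le 1/|s|$ where the paper uses L'H\^opital, the bound $H'<1$ is not needed, and for $\cF\to-\infty$ the bounds that matter are $\tfrac12 r(s)^2\le\tfrac12(|s|+1/|s|)^2$ together with $\Phi(s)\le\varphi(s)/|s|$, not $\tfrac12 r^2\ge\tfrac12 s^2$, which points the wrong way.
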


Because $r$ and $H$ are bijective, the centered family
$(s,a,b)=(s,r(s),H(s))$ can be parametrized by $s\in\R$, by the tilt $a>0$,
or by the endpoint $b>0$.  The endpoint parametrization is useful because
\eqref{eq:threshold-compatibility} is an equation for $b$.

\begin{figure}[htbp]
  \centering
  \begin{minipage}[c]{0.315\linewidth}
    \centering
    \includegraphics[width=\linewidth]{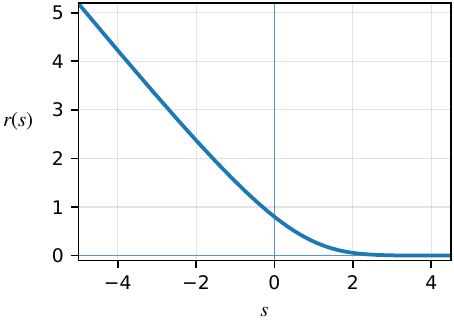}
  \end{minipage}\hfill
  \begin{minipage}[c]{0.315\linewidth}
    \centering
    \includegraphics[width=\linewidth]{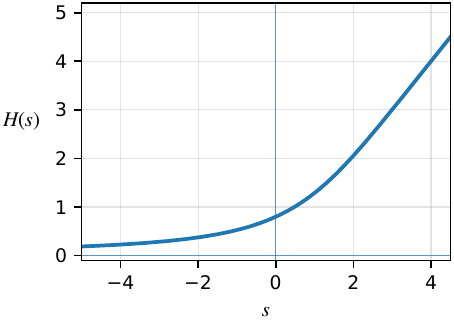}
  \end{minipage}\hfill
  \begin{minipage}[c]{0.315\linewidth}
    \centering
    \includegraphics[width=\linewidth]{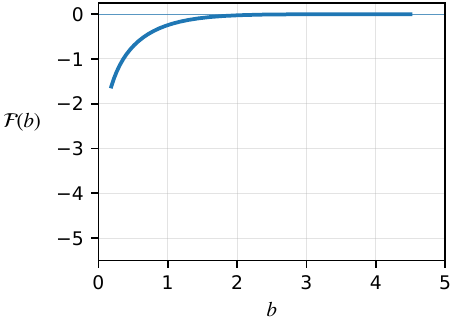}
  \end{minipage}
  \caption{The centering tilt $r(s)$, the corresponding endpoint $H(s)$, and
  the endpoint potential $\cF(b)$.  As $s$ increases from $-\infty$ to
  $+\infty$, $r(s)$ decreases from $+\infty$ to $0$, while $H(s)$ increases
  from $0$ to $+\infty$.  The potential is increasing and strictly concave on
  $(0,\infty)$.}
  \label{fig:tilt-functions}
\end{figure}

\subsection{Deriving the variational problem}\label{sec:variational}

Apply $r$ and $H$ coordinatewise and write
$a_i=r(s_i)$ and $b_i=H(s_i)$.  Then the factors in
\eqref{eq:tilted-factors} satisfy the centering condition for every
$s\in\R^m$.  The remaining problem is to impose the coupled identity
$b-Ra=c\one$.  We now derive a concave functional whose stationarity equations
are exactly this compatibility condition.

For an endpoint $b_i=H(s_i)$, the corresponding tilt is
$a_i=r(s_i)=\cF'(b_i)$.  Thus \eqref{eq:threshold-compatibility} is the
fixed-point equation
\begin{equation}\label{eq:fixed-point-b}
  b=c\one+Ra=c\one+R\cF'(b),
\end{equation}
where $\cF'$ is applied coordinatewise and $b\in(0,\infty)^m$.

The semidefinite condition $R \succeq J/m$ now enters. Recall
$\alpha=(m-1)/m$ and set
\[
  G=\alpha^{-1}\left(R-\frac1mJ\right).
\]
Then $G$ is a correlation matrix.  Choose unit vectors
$x_1,\ldots,x_m\in\R^d$, with $d=\rank G$, such that
$G_{ij}=\ip{x_i}{x_j}$.  For any $a\in\R^m$, let
$\bar a=m^{-1}\sum_i a_i$.  Since $R=\alpha G+J/m$,
\begin{equation}\label{eq:Ra-decomposition}
  (Ra)_i
  =\alpha\ip{x_i}{\sum_{j=1}^m a_jx_j}+\bar a.
\end{equation}
This decomposition suggests the variables in which to solve
\eqref{eq:fixed-point-b}. Put
\[
  v=\sum_{j=1}^m a_jx_j,
  \qquad
  q=c+\bar a.
\]
Then \eqref{eq:fixed-point-b} is equivalent to
\begin{equation}\label{eq:qv-target}
  b_i=q+\alpha\ip{x_i}{v},
  \qquad
  q-c=\frac1m\sum_{i=1}^m a_i,
  \qquad
  v=\sum_{i=1}^m a_i x_i.
\end{equation}
After substituting $a_i=\cF'(b_i)$ and
$b_i=q+\alpha\ip{x_i}{v}$, the last two equations in
\eqref{eq:qv-target} are the stationarity equations of the following
functional.  For $(q,v)\in\R\times\R^d$, define
\begin{equation}\label{eq:Jc-def}
 \cJ_c(q,v)
 =\sum_{i=1}^m\cF\!\left(q+\alpha\ip{x_i}{v}\right)
  -\frac\alpha2\|v\|^2
  -\frac m2(q-c)^2
\end{equation}
on the open convex set
\begin{equation}\label{eq:Jc-domain}
  \mathcal D
  =\left\{(q,v):q+\alpha\ip{x_i}{v}>0
    \text{ for every }i\right\}.
\end{equation}
The domain is nonempty because $(q,0)\in\mathcal D$ whenever $q>0$.  If
$b_i=q+\alpha\ip{x_i}{v}$ and $a_i=\cF'(b_i)$, then
\begin{align}\label{eq:Jc-gradients}
  \partial_q\cJ_c(q,v)
  &=\sum_{i=1}^m a_i-m(q-c),\\
  \nabla_v\cJ_c(q,v)
  &=\alpha\left(\sum_{i=1}^m a_i x_i-v\right).
\end{align}
Setting these gradients to zero gives the last two equations in
\eqref{eq:qv-target}.  Therefore, with
$b_i=q+\alpha\ip{x_i}{v}$ and $a_i=\cF'(b_i)$, every stationary point of
$\cJ_c$ satisfies $b-Ra=c\one$.

The value of the same functional supplies the probability estimate.  At a
stationary point,
\[
  \frac\alpha2\|v\|^2+\frac m2(q-c)^2
  =\frac12a^{\trans}Ra,
\]
while \eqref{eq:F-def} gives
$\cF(b_i)=\log\Phi(s_i)+a_i^2/2$.  Hence $\cJ_c(q,v)$ is precisely the
logarithm of the lower bound in \eqref{eq:tilt-reduction}.  The symmetric trial
point $(H(c),0)$ satisfies
\[
  \cJ_c(H(c),0)
  =m\cF(H(c))-\frac m2r(c)^2
  =m\log\Phi(c).
\]
Thus a maximizer of $\cJ_c$ yields both the compatible endpoints and the
benchmark in \eqref{eq:tilt-reduction-target}.  The next theorem establishes
existence and uniqueness and records these two outputs.

\begin{theorem}\label{thm:variational}
For every $c\in\R$, the functional $\cJ_c$ has a unique maximizer
$(q_*,v_*)\in\mathcal D$.  There are unique $s_{*,i}\in\R$ such that
\[
  H(s_{*,i})=q_*+\alpha\ip{x_i}{v_*}.
\]
Define
\[
  a_{*, i}=r(s_{*,i}),
  \qquad
  b_{*,i}=H(s_{*,i})=s_{*,i}+a_{*,i}.
\]
Then
\begin{equation}\label{eq:variational-compatibility}
  b_{*}-R a_{*}=c\one
\end{equation}
and
\begin{equation}\label{eq:variational-bound}
  \sum_{i=1}^m\log\Phi(s_{*,i})
  +\frac12a_{*}^{\trans}(I_m-R)a_{*}
  \geq m\log\Phi(c).
\end{equation}
\end{theorem}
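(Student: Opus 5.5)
The plan is to establish existence of a maximizer by strict concavity plus coercivity on the open convex domain $\mathcal D$, and then read off both conclusions from the stationarity equations already computed in \eqref{eq:Jc-gradients}.

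\emph{Strict concavity.} By \Cref{prop:tilt-functions}, $\cF$ is concave on $(0,\infty)$, and each map $(q,v)\mapsto q+\alpha\ip{x_i}{v}$ is affine, so $\sum_i\cF(q+\alpha\ip{x_i}{v})$ is concave on $\mathcal D$. The quadratic penalty $-\frac\alpha2\|v\|^2-\frac m2(q-c)^2$ is strictly concave in $(q,v)$ because $\alpha>0$. Hence $\cJ_c$ is strictly concave on $\mathcal D$, and a maximizer, if one exists, is unique.

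\emph{Existence.} I will show that the superlevel set $\{\cJ_c\geq\cJ_c(H(c),0)=m\log\Phi(c)\}$ is compact inside $\mathcal D$. Two things must be controlled.
\begin{itemize}
\item \emph{Boundedness.} Since $\cF<0$, we have $\cJ_c\leq-\frac\alpha2\|v\|^2-\frac m2(q-c)^2$, so the superlevel set is bounded.
\item \emph{Distance from $\partial\mathcal D$.} As some $b_i=q+\alpha\ip{x_i}{v}\downarrow0$, that is $s_i\to-\infty$, the relation $\cF(H(s))=\log\Phi(s)+r(s)^2/2$ shows that $\cF(b)\to-\infty$. This needs checking, since $r(s)^2/2\sim s^2/2$ while $\log\Phi(s)\sim -s^2/2-\log|s|$. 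The difference is $-\log|s|+O(1)\to-\infty$, so indeed $\cF(0^+)=-\infty$; this also matches the range $(-\infty,0)$ in \Cref{prop:tilt-functions}. Because the other terms are bounded above on bounded sets, $\cJ_c\to-\infty$ near $\partial\mathcal D$.
\end{itemize}
Together with continuity, this makes the superlevel set compact, so a maximizer $(q_*,v_*)$ exists. It is an interior point of the open set $\mathcal D$, so both gradients in \eqref{eq:Jc-gradients} vanish there.

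\emph{Conclusions.} Since every $b_{*,i}=q_*+\alpha\ip{x_i}{v_*}>0$ and $H$ is a bijection onto $(0,\infty)$, the parameters $s_{*,i}$ exist and are unique, and $a_{*,i}=r(s_{*,i})=\cF'(b_{*,i})$ by \eqref{eq:F-properties}. Stationarity gives $v_*=\sum_i a_{*,i}x_i$ and $q_*-c=\bar a_*$. Then \eqref{eq:Ra-decomposition} yields $(Ra_*)_i=\alpha\ip{x_i}{v_*}+\bar a_*=b_{*,i}-c$, which is \eqref{eq:variational-compatibility}.

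For the bound, compute at the maximizer:
\[
\alpha\|v_*\|^2+m(q_*-c)^2=\alpha\, a_*^{\trans}Ga_*+m\bar a_*^2=a_*^{\trans}\left(\alpha G+\tfrac1mJ\right)a_*=a_*^{\trans}Ra_*,
\]
using $m\bar a_*^2=a_*^{\trans}Ja_*/m$. Combining this with $\cF(b_{*,i})=\log\Phi(s_{*,i})+a_{*,i}^2/2$ gives
\[
\cJ_c(q_*,v_*)=\sum_i\log\Phi(s_{*,i})+\tfrac12\|a_*\|^2-\tfrac12a_*^{\trans}Ra_*,
\]
which is exactly the left side of \eqref{eq:variational-bound}. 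Maximality gives $\cJ_c(q_*,v_*)\geq\cJ_c(H(c),0)=m\log\Phi(c)$, which proves \eqref{eq:variational-bound}.

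The main obstacle is the boundary behavior $\cF(0^+)=-\infty$. It relies on the Mills-ratio asymptotics of $r$ as $s\to-\infty$, which \Cref{prop:tilt-functions} states only implicitly through its claim that $\cF$ maps onto $(-\infty,0)$. That claim already implies $\cF(b)\to-\infty$ as $b\downarrow0$, because $\cF$ is increasing, so I would cite it directly rather than redo the asymptotics.
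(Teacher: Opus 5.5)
Your proposal is correct and follows the paper's proof essentially step for step. It uses strict concavity plus coercivity (with $\cF(0^+)=-\infty$ taken from \Cref{prop:tilt-functions}) for existence and uniqueness, stationarity combined with \eqref{eq:Ra-decomposition} for the compatibility identity, and the identity $\alpha\|v_*\|^2+m(q_*-c)^2=a_*^{\trans}Ra_*$ together with the trial point $(H(c),0)$ for the bound. Your heuristic aside would actually need $r(s)^2=s^2+O(1)$ rather than only $r(s)^2\sim s^2$; the Mills bound $r(-t)\leq t+1/t$ supplies this, and citing the proposition as you propose makes it moot anyway.
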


\begin{proof}
Each term involving $\cF$ in \eqref{eq:Jc-def} is concave, while the two
negative quadratic terms are jointly strictly concave.  Hence $\cJ_c$ is
strictly concave.  Moreover, $\cF<0$ and negativity of the quadratic terms imply
$\cJ_c(q,v)\to-\infty$ as $\|(q,v)\|\to\infty$, whereas
$\cF(y)\to-\infty$ as $y\downarrow0$ implies that $\cJ_c$ tends to
$-\infty$ at the boundary of $\mathcal D$.  Its nonempty upper level sets are
therefore compact subsets of $\mathcal D$.  A maximizer exists, and strict
concavity makes it unique.

Set
\[
  b_{*,i}=q_*+\alpha\ip{x_i}{v_*}=H(s_{*,i}),
  \qquad
  a_{*,i}=\cF'(b_{*,i})=r(s_{*,i})>0.
\]
The stationarity equations \eqref{eq:Jc-gradients} give
\begin{equation}\label{eq:variational-stationarity}
  v_*=\sum_{i=1}^m a_{*,i} x_i,
  \qquad
  q_*=c+\bar a_{*},
  \qquad
  \bar a_{*}=\frac1m\sum_{i=1}^m a_{*,i}.
\end{equation}
Combining \eqref{eq:Ra-decomposition} and
\eqref{eq:variational-stationarity},
\[
  (Ra_{*})_i=\alpha\ip{x_i}{v_*}+\bar a_{*},
\]
and therefore
\[
  b_{*,i}=q_*+\alpha\ip{x_i}{v_*}=c+(Ra_{*})_i.
\]
This proves \eqref{eq:variational-compatibility}.

At the maximizer,
\[
  \sum_{i=1}^m\cF(b_{*,i})
  =\sum_{i=1}^m\log\Phi(s_{*,i})+\frac12\|a_{*}\|^2.
\]
Moreover, \eqref{eq:variational-stationarity} gives
\begin{align*}
  a_{*}^{\trans}Ra_{*}
  &=\alpha\left\|\sum_{i=1}^m a_{*,i} x_i\right\|^2
    +\frac1m\left(\sum_{i=1}^m a_{*,i}\right)^2\\
  &=\alpha\|v_*\|^2+m(q_*-c)^2.
\end{align*}
Consequently,
\begin{equation}\label{eq:Jc-at-maximizer}
  \cJ_c(q_*,v_*)
  =\sum_{i=1}^m\log\Phi(s_{*,i})
   +\frac12a_{*}^{\trans}(I_m-R)a_{*}.
\end{equation}
Finally,
\begin{align*}
  \cJ_c(H(c),0)
  &=m\left(\log\Phi(c)+\frac12r(c)^2\right)
    -\frac m2(H(c) - c)^2\\
  &=m\left(\log\Phi(c)+\frac12r(c)^2\right)
    -\frac m2r(c)^2\\
  &=m\log\Phi(c).
\end{align*}
Maximality and \eqref{eq:Jc-at-maximizer} prove
\eqref{eq:variational-bound}.
\end{proof}

\Cref{thm:variational} has produced exactly the vectors $a$ and $b$ required
by \eqref{eq:factor-first-moment} and
\eqref{eq:threshold-compatibility}, together with the scalar estimate needed in
\eqref{eq:tilt-reduction-target}.  The semidefinite hypothesis $R\succeq J/m$
is used precisely in this construction: it makes
$G=\alpha^{-1}(R-J/m)$ positive semidefinite and hence provides the Gram
representation in \eqref{eq:Ra-decomposition}.  No inverse of $R$ is used, so
the argument applies without change when $R$ is singular.

\section{Proof of the stochastic comparison}\label{sec:orthant-proof}

At this point the construction is complete.  The product inequality bounds the
weighted lower-orthant event, the Gaussian change of measure removes the
weight, and the variational theorem supplies both the common threshold and the
remaining scalar estimate.  Their combination proves \Cref{thm:orthant}.

\begin{proof}[Proof of the stochastic comparison in \Cref{thm:orthant}]
Let $R\succeq J/m$, let $X\sim\cN(0,R)$, and fix $c\in\R$.  Take
$s\in\R^m$ and $a,b\in(0,\infty)^m$ from \Cref{thm:variational}.  For
$1\leq i\leq m$, define univariate functions
\[
  f_i(z)=e^{a_i z}\ind_{(-\infty,b_i]}(z).
\]
These functions are bounded and log-concave, and
\Cref{lem:local-centering} gives
\[
  \int zf_i(z)\,d\gamma_1(z)=0,
  \qquad
  \int f_i\,d\gamma_1=e^{a_i^2/2}\Phi(s_i).
\]
Applying \Cref{thm:centered-product} yields
\begin{equation}\label{eq:product-applied}
  \E\!\left[e^{a^{\trans}X}\ind_{\{X\leq b\}}\right]
  \geq e^{\|a\|^2/2}\prod_{i=1}^m\Phi(s_i).
\end{equation}
On the other hand, \eqref{eq:variational-compatibility} and
\Cref{lem:gaussian-shift} give the event identity
\begin{equation} \label{eq:event-identity}
  \E\!\left[e^{a^{\trans}X}\ind_{\{X\leq b\}}\right]
  =e^{a^{\trans}Ra/2}\Pp\{X\leq c\one\}.
\end{equation}
The quotient of the two Gaussian normalizing factors is therefore
$\exp\{a^{\trans}(I_m-R)a/2\}$, exactly the quadratic term in the
variational bound.  Combining \eqref{eq:product-applied} and
\eqref{eq:event-identity}, and then applying
\eqref{eq:variational-bound},
\begin{align*}
  \Pp\{X\leq c\one\}
  &\geq
  \exp\!\left(\frac12a^{\trans}(I_m-R)a\right)
  \prod_{i=1}^m\Phi(s_i)\\
  &=\exp\!\left(
       \sum_{i=1}^m\log\Phi(s_i)
       +\frac12a^{\trans}(I_m-R)a
     \right)\\
  &\geq\Phi(c)^m.
\end{align*}
Since $c$ was arbitrary, this proves
$\M_X\leq_{\mathrm{st}}\M_Z$.
\end{proof}

If $R=I_m$, then equality in distribution is immediate.  If $R\neq I_m$,
the strict refinement in \Cref{thm:strict-orthant-refinement} gives
\[
  \Pp\{X\leq c\one\}>\Phi(c)^m
  \qquad(c\in\R),
\]
so the laws of $\M_X$ and $\M_Z$ are different.  This proves the equality
statement in \Cref{thm:orthant}.

The stochastic comparison, including its equality statement, is now complete.

\section{Applications}\label{sec:consequences}

\Cref{thm:orthant} compares the maximum associated with the transformed
covariance $R$ to the independent Gaussian maximum.  The applications require
two further steps.  First, stochastic order must be converted into expectation
inequalities.  Second, \eqref{eq:R-from-G} and \eqref{eq:normalized-mgf} must
be used to return from $R$ to the original score covariance $G$.  We carry this
out first for Gaussian maxima and classification, then extend the
expected-maximum comparison to covariance matrices with marginal variances at
most one, and finally treat the finite-energy AWGN sequence model.

\subsection{Increasing functions of the normalized maximum}

\begin{proof}[Proof of \Cref{cor:monotone}]
For a real random variable $U$, let
\[
  Q_U(u)=\inf\{x\in\R:\Pp\{U\leq x\}\geq u\},
  \qquad 0<u<1,
\]
be its generalized quantile function.  If $W$ is uniform on $(0,1)$, then
$Q_{\M_X}(W)$ and $Q_{\M_Z}(W)$ have the laws of $\M_X$ and $\M_Z$.
The stochastic order in \Cref{thm:orthant} is equivalent to
\[
  Q_{\M_X}(u)\leq Q_{\M_Z}(u)
  \qquad(0<u<1).
\]
Applying a nondecreasing nonnegative function $\psi$ and then taking
expectations proves the first assertion, including the case of infinite
expectations.

If $R\neq I_m$, the equality statement in \Cref{thm:orthant} shows that the
two maximum laws are different.  Their quantile functions therefore differ on
a set of positive Lebesgue measure.  For a strictly increasing $\psi$, the
inequality
\[
  \psi(Q_{\M_X}(W))\leq\psi(Q_{\M_Z}(W))
\]
is then strict with positive probability.  Integrability yields the strict
expectation inequality.  Gaussian maxima have finite moments and finite
positive exponential moments of every order, so the choices
$\psi(x)=x$ and $\psi(x)=e^{\lambda x}$ give the final statements.
\end{proof}

\subsection{Returning to the score covariance}

Let $G$ be a correlation matrix, let $\xi\sim\cN(0,G)$, and define $R$ and
$X$ by \eqref{eq:R-from-G} and \eqref{eq:X-common}.  The stochastic comparison
acts on $\max_iX_i$, but the normalized identity
\eqref{eq:normalized-mgf} transfers every positive exponential moment exactly
back to $\max_i\xi_i$.  This is precisely the transfer needed for the decoding
objective.

\begin{proof}[Proof of \Cref{cor:mgf}]
Fix $\lambda>0$ and set $\mu=\lambda/\sqrt\alpha$.  By
\Cref{cor:monotone},
\[
  \E e^{\mu\max_iX_i}\leq\E e^{\mu\M_Z},
\]
with strict inequality unless $R=I_m$.  Apply
\eqref{eq:normalized-mgf} to $G$.  Applying the same identity to
$G_\triangle$, whose transformed covariance is $I_m$, converts the right-hand
side into the regular-simplex score maximum.  The common factor
$e^{-\lambda^2/2}$ cancels, giving \eqref{eq:mgf-comparison}.  Finally,
$R=I_m$ if and only if $G=G_\triangle$, so the strict comparison gives the
stated equality characterization.
\end{proof}

The maximum-likelihood identity now turns the Gaussian MGF comparison into the
coding theorem without any additional estimate.

\begin{proof}[Proof of \Cref{cor:wsc}]
Let $G$ be the Gram matrix of the code.  By \eqref{eq:ml-integral}, its
correct-decoding probability is
\[
  \frac{e^{-\lambda^2/2}}{m}
  \E\exp\!\left\{\lambda\max_i\xi_i\right\}.
\]
The prefactor is independent of the code, so \Cref{cor:mgf} proves maximality
of $G_\triangle$.  When $m=d+1$, this matrix is realized by a regular simplex
in $\R^d$.  Equality at the fixed positive signal strength forces
$G=G_\triangle$.

The matrix $G_\triangle$ has rank $d$, so any ordered configuration with this
Gram matrix spans $\R^d$.  Two spanning ordered configurations with the same
Gram matrix differ by an orthogonal transformation.  Since relabeling the
vertices leaves $G_\triangle$ unchanged, this gives uniqueness up to
orthogonal congruence and relabeling.
\end{proof}

\subsection{Expected maxima under marginal variance constraints}

For expected maxima, unlike positive exponential moments, the common Gaussian
variable disappears after taking expectations.  Moreover, points in a unit
ball produce Gaussian coordinates whose variances may be strictly smaller
than one.  The following elementary variance-completion step bridges that
gap.

\begin{lemma}[Variance completion]\label[lemma]{lem:variance-completion}
Let $S$ be an $m\times m$ positive-semidefinite matrix with $S_{ii}\leq1$,
and let $Y\sim\cN(0,S)$.  Define
\[
  D=\diag(1-S_{11},\ldots,1-S_{mm}),
\]
and let $W\sim\cN(0,D)$ be independent of $Y$.  Then $G=S+D$ is a
correlation matrix and
\begin{equation}\label{eq:variance-completion}
  \E\max_iY_i\leq\E\max_i(Y_i+W_i).
\end{equation}
\end{lemma}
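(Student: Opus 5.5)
The first claim, that $G=S+D$ is a correlation matrix, is immediate: $S\succeq0$ and $D\succeq0$, because $S_{ii}\leq1$, so $G\succeq0$. Its diagonal entries are $S_{ii}+(1-S_{ii})=1$. Since $Y$ and $W$ are independent centered Gaussians, $Y+W\sim\cN(0,S+D)$.

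For \eqref{eq:variance-completion}, I will use conditional Jensen. The map $w\mapsto\max_i(y_i+w_i)$ is convex on $\R^m$ for every fixed $y$. Conditioning on $Y$ and using $\E W=0$ together with independence gives
\[
  \E\!\left[\max_i(Y_i+W_i)\,\middle|\,Y\right]
  \geq\max_i\bigl(Y_i+\E[W_i\mid Y]\bigr)=\max_iY_i .
\]
Taking expectations then yields the inequality.

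It remains to check that both sides are finite, so that the expectations make sense. This holds because
\[
  \Bigl|\max_i u_i\Bigr|\leq\sum_i|u_i|
\]
and Gaussian coordinates are integrable. Equivalently, one can replace conditional Jensen by Fubini: for almost every fixed value $y$ of $Y$, apply Jensen's inequality to the integral over $W$, then integrate in $y$.

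No step here is a real obstacle; the only care needed is the integrability justification. Any degenerate coordinates with $D_{ii}=0$ cause no trouble, since there $W_i=0$ almost surely.
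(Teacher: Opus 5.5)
Your proposal is correct and matches the paper's proof: $D\succeq0$ makes $G$ a correlation matrix, and conditional Jensen for the convex map $w\mapsto\max_i(y_i+w_i)$, using independence and $\E W=0$, gives the inequality. Your integrability check is a small addition that the paper leaves implicit.
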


\begin{proof}
The matrix $D$ is positive semidefinite, so $G=S+D$ is positive semidefinite;
its diagonal entries are one.  For each realization of $Y$, convexity of the
maximum gives the conditional Jensen inequality
\[
  \max_iY_i
  =\max_i\E[Y_i+W_i\mid Y]
  \leq\E\!\left[\max_i(Y_i+W_i)\mid Y\right].
\]
Taking expectations proves \eqref{eq:variance-completion}.
\end{proof}

\begin{proof}[Proof of \Cref{cor:mean-width}]
We first prove the Gaussian statement.  Apply
\Cref{lem:variance-completion} and write
$\widetilde Y=Y+W\sim\cN(0,G)$.  The common-Gaussian representation
\eqref{eq:X-common}, together with $\E B=0$, gives
\begin{equation}\label{eq:expected-max-transform}
  \E\M_X=\sqrt\alpha\,\E\max_i\widetilde Y_i
\end{equation}
for the transformed covariance associated with $G$.  The same identity for
$G_\triangle$ gives
\[
  \E\M_Z=\sqrt\alpha\,\E\max_i\xi_i^\triangle.
\]
The expectation comparison in \Cref{cor:monotone} therefore yields
\[
  \E\max_iY_i
  \leq\E\max_i\widetilde Y_i
  \leq\E\max_i\xi_i^\triangle.
\]
Since the transformed covariance of $G_\triangle$ is $I_m$, the preceding
identity also gives
\[
  \E\max_i\xi_i^\triangle
  =\alpha^{-1/2}\E\M_Z
  =\sqrt{\frac{m}{m-1}}\,\E\max_iZ_i,
\]
which proves \eqref{eq:expected-max}.

It remains to identify equality.  If equality holds in
\eqref{eq:expected-max}, then the second inequality in the preceding chain is
an equality.  The strict part of \Cref{cor:monotone} therefore forces
$G=G_\triangle$.  Since $D$ is diagonal, this already gives
\[
  S_{ij}=-\frac1{m-1}
  \qquad(i\neq j).
\]
Positive semidefiniteness of $S$ and the bounds $S_{ii}\leq1$ imply
\[
  0\leq\one^{\trans}S\one
  =\sum_{i=1}^m S_{ii}
    +2\sum_{i<j}S_{ij}
  =\sum_{i=1}^mS_{ii}-m
  \leq0.
\]
Thus $S_{ii}=1$ for every $i$, so $D=0$ and $S=G_\triangle$.  Conversely,
$S=G_\triangle$ clearly gives equality.

Now let $y_1,\ldots,y_m$ lie in the Euclidean unit ball of $\R^d$, and put
$K=\conv\{y_1,\ldots,y_m\}$.  If $g\sim\cN(0,I_d)$, then the Gaussian vector
$Y_i=\ip{g}{y_i}$ has covariance
$S=(\ip{y_i}{y_j})$, which is positive semidefinite with $S_{ii}\leq1$.
Hence
\[
  \E h_K(g)
  =\E\max_i\ip{g}{y_i}
  \leq\E\max_i\xi_i^\triangle.
\]
Let $\sigma_{d-1}$ be normalized rotation-invariant measure on
$S^{d-1}$ and use the convention
\[
  w(K)=\int_{S^{d-1}}\bigl(h_K(u)+h_K(-u)\bigr)
       \,d\sigma_{d-1}(u)
      =2\int_{S^{d-1}}h_K(u)\,d\sigma_{d-1}(u).
\]
Writing $g$ as the product of its norm and its independent uniform direction
gives
\begin{equation}\label{eq:gaussian-mean-width}
  \E h_K(g)=\frac{\E\norm{g}}{2}\,w(K).
\end{equation}
Thus the Gaussian expected-maximum inequality is exactly the desired
mean-width inequality in a fixed ambient dimension.  Equality forces the Gram
matrix of the $y_i$ to be $G_\triangle$, so the points lie on the unit sphere
and form a centered regular simplex.  When $d\geq m-1$, such a simplex is
feasible, and equal Gram matrices give uniqueness up to an orthogonal map on
their spans, extended to $\R^d$, and relabeling.
\end{proof}

\subsection{Finite-energy AWGN codes}

The variance-completion argument above is sufficient for expected maxima, but
it does not by itself solve the finite-energy decoding problem.  When the
codeword norms are unequal, the Gaussian likelihood scores contain the
deterministic offsets $-\|c_i\|^2/N_0$ in addition to their random linear
parts.  The maximal per-codeword energy constraint nevertheless permits an
exact reduction to the equal-energy setting.  Because the sequence model also
requires a measure-theoretic finite-dimensional reduction, we give the full
details in \Cref{app:awgn} and retain here the structure of the argument.

\begin{proof}[Proof of \Cref{cor:finite-energy}]
The sufficient-statistic construction in \Cref{app:awgn} shows that every
$M$-message code in the AWGN sequence model is equivalent, for optimal
decoding, to a Gaussian shift experiment of dimension at most $M-1$.  Next,
one extra signal/noise coordinate is appended to replace each codeword $c_i$
by
\[
  \widetilde c_i
  =\left(c_i,\sqrt{E-\norm{c_i}^2}\right).
\]
All augmented codewords have squared norm $E$, and a decoder may ignore the
new coordinate, so this operation cannot reduce the optimal success
probability.  Projecting away their common component orthogonal to the
difference span leaves an equal-radius configuration in dimension at most
$M-1$.  After embedding that span in $\R^{M-1}$, \Cref{cor:wsc} bounds the
configuration by a regular simplex of the same radius.  Finally, a Gaussian
degradation argument shows that the optimal success probability for a regular
simplex is nondecreasing in its radius, so the full radius $\sqrt E$ is
optimal.  These reductions are formalized in \Cref{prop:awgn-reduction}.
Su et al.\ track equality through this chain to obtain the optimizer
characterization stated after \Cref{cor:finite-energy}
\cite[Theorem~3(b)]{SuYangXuI2026}.

To evaluate the optimum, translate the centered regular simplex to the
orthogonal representatives $Ae_i$, where
$A=\sqrt{EM/(M-1)}$.  Translation by a common vector does not change any
likelihood ratio.  Conditioning on the noise in the transmitted coordinate
then gives
\[
  P_{\mathrm c}^*(E,M)
  =\E\Phi\!\left(
       W+\sqrt{\frac{2EM}{(M-1)N_0}}
     \right)^{M-1},
  \qquad W\sim\cN(0,1),
\]
which is \eqref{eq:pc-opt}.  The definition of $M^*(E,\epsilon)$ now gives
\eqref{eq:M-star}.  As proved in \Cref{app:awgn}, $p_M$ is continuous and
strictly increasing, with $p_M(0)=1/M$ and $p_M(a)\uparrow1$.  Inverting
\eqref{eq:pc-opt} therefore gives \eqref{eq:E-min}, including the endpoint
cases $\epsilon\geq1-1/M$ and $\epsilon=0$.
\end{proof}

The conclusion is specific to deterministic, equiprobable, no-feedback codes
under a maximal per-codeword total-energy constraint, with no restriction on
the number of channel uses.  It proves the fixed-$(M,\epsilon)$ simplex-code
assertion discussed in \cite[Remark~21.2]{Polyanskiy_Wu_2025}.  Average energy
over the codebook, feedback, unequal priors, and non-Gaussian noise are not
covered by this reduction.

This completes the applications stated in \Cref{sec:results}.  We close by
recording the equality-case question posed in version~1, now resolved, followed
by three further open problems.

\section{A resolved problem and further questions}\label{sec:discussion}

The first version of this paper posed the following question.

\begin{resolvedproblem}[Equality cases---resolved]
\label[resolvedproblem]{prob:equality-cases-resolved}
Determine the equality cases in \eqref{eq:orthant-main},
\eqref{eq:mgf-comparison}, and the coding and mean-width consequences.  In
particular, determine when equality at one nontrivial threshold or one positive
MGF parameter forces $R=I_m$, or equivalently forces the original covariance
to be $G_\triangle$ after \eqref{eq:R-from-G}.
\end{resolvedproblem}

This statement appeared as Open Problem~8.1 in version~1.  It was resolved
independently by the present author and by Su et al.~\cite{SuYangXuI2026}.  The present manuscript proves the stronger
pointwise statement in \Cref{thm:strict-orthant-refinement};
\Cref{cor:mgf,cor:wsc,cor:mean-width} give the corresponding MGF, coding, and
mean-width rigidity statements.  We retain the statement and number as a historical resolved problem to preserve the referent in the title of \cite{SuYangXuI2026}.  The chronology and the relation between the two proof
routes are described in \Cref{sec:prior-work}.

The common-threshold comparison also isolates several nearby problems for
which the present proof supplies only an upper bound or a counterexample to
the most naive extension.

\begin{problem}[The rank-constrained Gaussian maximum]\label[problem]{prob:rank-constrained}
Fix integers $m\geq3$ and $1\leq d<m-1$, and fix $\lambda>0$.  Determine
\[
  \max\left\{
    \E\exp\!\left(\lambda\max_{1\leq i\leq m}\xi_i\right):
    \xi\sim\cN(0,G),\ G\text{ a correlation matrix},\ \rank G\leq d
  \right\},
\]
and characterize all maximizing covariance matrices.  In particular, determine
whether the maximizing configuration depends on $\lambda$.
\end{problem}

The feasible set is compact, so a maximizer exists.  \Cref{cor:mgf} gives the
regular $(m-1)$-simplex value as a universal upper bound, but that covariance
has rank $m-1$ and is not feasible.  Thus this problem asks for the optimal
overloaded equal-energy Gaussian code when the number of signals exceeds the
ambient dimension plus one.  The unconstrained theorem does not indicate which
boundary point of the rank-constrained elliptope should be optimal, so even
the candidate geometry remains to be identified.

\begin{problem}[Universal lower-orthant thresholds]\label[problem]{prob:threshold-vectors}
Determine all vectors $t=(t_1,\ldots,t_m)\in\R^m$ for which
\[
  \Pp\{X_i\leq t_i\text{ for every }i\}
  \geq\prod_{i=1}^m\Phi(t_i)
\]
holds for every correlation matrix $R\succeq J/m$ and every
$X\sim\cN(0,R)$.
\end{problem}

\Cref{thm:orthant} shows that every constant vector $t=c\one$ has this
property, whereas \Cref{rem:unequal-thresholds} shows that arbitrary threshold
vectors do not.  A solution would identify the precise reach of the
common-threshold phenomenon.  It would also clarify whether the adaptive
centering construction can be modified to treat any genuinely asymmetric
family of one-sided events.

\begin{problem}[Average-energy Gaussian codes]\label[problem]{prob:average-energy}
For fixed $M$, ambient dimension, noise level, and average-energy budget $E$,
maximize the average probability of correct decoding over equiprobable
Gaussian codebooks satisfying
\[
  \frac1M\sum_{i=1}^M\norm{c_i}^2\leq E.
\]
Characterize the optimizers as a function of the signal-to-noise ratio.  Can an
optimizer always be chosen from a family consisting of a lower-dimensional
regular simplex together with coincident zero signals, or are additional
geometries unavoidable?
\end{problem}

The regular simplex is not always optimal under this weaker constraint: this
is the failure of the Strong Simplex Conjecture proved by Steiner
\cite{Steiner1994}.  The known counterexample literature already exhibits a
substantial family of lower-dimensional degeneracies.  In particular, Sun
studied codebooks formed by a regular simplex on $K$ nonzero signals together
with $M-K$ zero signals, as well as related circle-plus-zero configurations
\cite{Sun1997}.  These results show that the failure is not confined to a
single exceptional construction, but they do not give a general description
of the global optimizer.  The maximal per-codeword constraint used in
\Cref{cor:finite-energy} rules out precisely this redistribution of energy
among messages.

Other extensions---including unequal priors, feedback, and non-Gaussian
noise---change either the one-dimensional centering equations or the Gaussian
change of measure and therefore require new ideas beyond the argument given
here.

\section*{Tool and computational resource disclosure}

Large language models provided by OpenAI, including GPT-5.6 Pro and GPT-5.6
Sol Max (accessed in July 2026), were used during the development of this work
to assist with mathematical exploration, proof development and checking,
literature discovery and organization, drafting, and editorial revision.  The
author independently checked the mathematical arguments and references and
takes full responsibility for the correctness, attribution, and presentation
of the paper.  No theorem, proof step, or bibliographic assertion was retained
solely on the authority of an automated system.  No automated system is an
author, and the proofs do not rely on a computer-assisted verification.

\clearpage
\appendix
\section{Calculus for the truncated exponential functions}
\label{app:truncated-functions}

This appendix proves the properties of $r$, $H$, and $\cF$ used in
\Cref{sec:one-d-calc}.

Let $Z\sim\cN(0,1)$. The identity \(\varphi'(z) = - z \varphi(z)\) gives
\begin{equation}\label{eq:truncated-first-moment}
  \int_{-\infty}^s z\varphi(z)\,dz=-\varphi(s) = -r(s) \Phi(s),
\end{equation}
and therefore
\[
  r(s)=\frac{\varphi(s)}{\Phi(s)}
      =-\E[Z\mid Z\leq s] > 0,
  \qquad
  H(s)=s+r(s)=\E[s-Z\mid Z\leq s]>0.
\]
Differentiating $r$ yields
\begin{equation}\label{eq:r-prime}
  r'(s)=-r(s)H(s)<0.
\end{equation}
Thus $r$ is strictly decreasing.  It tends to zero as $s\to\infty$.
Moreover, $Z\leq s$ under the conditioning in the preceding display, so
$r(s)\geq-s$ and hence $r(s)\to\infty$ as $s\to-\infty$.  Consequently,
$r$ maps $\R$ bijectively onto $(0,\infty)$.

The identity \((z \varphi(z))' = \varphi(z) - z^{2}\varphi(z)\) gives
\[
  \int_{-\infty}^s z^2\varphi(z)\,dz
  =\Phi(s)-s\varphi(s) = \Phi(s)(1 - s r(s)).
\]
It follows that
\begin{equation}\label{eq:H-prime}
  H'(s)
  =1-sr(s)-r(s)^2
  =\operatorname{Var}(Z\mid Z\leq s)>0.
\end{equation}
Hence $H$ is strictly increasing.  As $s\to\infty$,
$H(s)=s+r(s)\to\infty$.  At the other endpoint,
\[
  H(s)\Phi(s)
  =s\Phi(s)+\varphi(s)
  =\int_{-\infty}^s\Phi(u)\,du.
\]
Both the numerator and denominator in
\[
  H(s)=\frac{\int_{-\infty}^s\Phi(u)\,du}{\Phi(s)}
\]
tend to zero as $s\to-\infty$.  L'H\^opital's rule and the preceding
limit for $r$ give
\[
  \lim_{s\to-\infty}H(s)
  =\lim_{s\to-\infty}\frac{\Phi(s)}{\varphi(s)}
  =\lim_{s\to-\infty}\frac1{r(s)}=0.
\]
Thus $H$ maps $\R$ bijectively onto $(0,\infty)$.

Because $H$ is bijective, the definition
\[
  \cF(H(s))=\log\Phi(s)+\frac12r(s)^2
\]
determines a function $\cF:(0,\infty)\to\R$.  Using
\eqref{eq:r-prime} and \eqref{eq:H-prime},
\begin{align*}
  \frac{d}{ds}\left(\log\Phi(s)+\frac12r(s)^2\right)
  &=r(s)+r(s)r'(s)\\
  &=r(s)H'(s).
\end{align*}
Therefore, chain rule gives
\begin{equation}\label{eq:F-prime}
  \cF'(H(s))=r(s)>0.
\end{equation}
Differentiating once more gives
\[
  \cF''(H(s))=\frac{r'(s)}{H'(s)}<0.
\]
Hence $\cF$ is strictly increasing and strictly concave.  As $s\to\infty$,
$\Phi(s)\to1$ and $r(s)\to0$, so $\cF(H(s))\to0$.

It remains to determine the other endpoint.  Put $s=-t$, $t>0$, and write
$Q(t)=\Phi(-t)$.  The classical Mills bounds \cite{Gordon1941}
\[
  \frac{t}{1+t^2}\varphi(t)
  \leq Q(t)
  \leq\frac{\varphi(t)}t
\]
imply
\[
  r(-t)=\frac{\varphi(t)}{Q(t)}\leq t+\frac1t.
\]
Consequently,
\begin{align*}
  \cF(H(-t))
  &=\log Q(t)+\frac12r(-t)^2\\
  &\leq
    -\frac{t^2}{2}-\log t-\frac12\log(2\pi)
    +\frac12\left(t+\frac1t\right)^2\\
  &=1+\frac1{2t^2}-\log t-\frac12\log(2\pi)
  \longrightarrow-\infty.
\end{align*}
Since $H(-t)\downarrow0$, this proves
$\cF(y)\to-\infty$ as $y\downarrow0$.  Together with monotonicity and the
limit at infinity, it shows that $\cF$ maps $(0,\infty)$ bijectively onto
$(-\infty,0)$ and is negative throughout its domain.  This proves
\Cref{prop:tilt-functions}.

\section{Details on the Gaussian product inequality}
\label{app:centered-product}

This appendix first records how \Cref{thm:centered-product} follows from the
general Gaussian forward--reverse Brascamp--Lieb theorem in \Cref{app:frbl-specialization}.  It then gives an alternate direct proof from the symmetric Gaussian rectangle inequality, normalized
self-convolution, and the central limit theorem in the subsequent sections of \Cref{app:centered-product}.  The direct proof also yields
the stronger strict inequality at every finite common threshold stated in
\Cref{rem:strict-common-threshold}. More general centered Gaussian correlation
and forward--reverse Brascamp--Lieb inequalities appear in
\cite{NakamuraTsuji2025Centered,NakamuraTsuji2026Saturation,
MilmanNakamuraTsuji2026}.

\subsection{Specialization of the forward--reverse Brascamp--Lieb theorem}
\label{app:frbl-specialization}

\Cref{thm:centered-product} is a specialization of
\cite[Theorem~1.10]{MilmanNakamuraTsuji2026}.  In the notation of that theorem,
take $I=m$, $n_i=1$,
$c_i=1$, and $\Sigma_i=1$ for every input; take a single output with
$m_1=m$, $d_1=1$, $L_1=\operatorname{Id}_{\R^m}$, $\Gamma_1=R$, and $\mathcal{Q}=0$.
Set $h(x)=\prod_{i=1}^m f_i(x_i)$.  The pointwise hypothesis then holds with
equality, and the centeredness condition is precisely
\eqref{eq:zero-first-moment}.

The remaining Gaussian constant in this specialization is
\begin{equation}\label{eq:MNT-constant}
 \sup_{\substack{A_i\geq0,\ B\succeq0\\
                  \diag(A_1,\ldots,A_m)\succeq B}}
 \frac{\det(I_m+RB)^{1/2}}{\prod_{i=1}^m(1+A_i)^{1/2}}.
\end{equation}
Writing $A=\diag(A_1,\ldots,A_m)$ and using $B\preceq A$, Loewner
monotonicity of the determinant, Sylvester's determinant identity, and
Hadamard's inequality give
\begin{align*}
 \det(I_m+RB)
 &=\det(I_m+R^{1/2}BR^{1/2})\\
 &\leq\det(I_m+R^{1/2}AR^{1/2})\\
 &=\det(I_m+A^{1/2}RA^{1/2})\\
 &\leq\prod_{i=1}^m(1+A_i),
\end{align*}
because $R_{ii}=1$.  Thus the constant in \eqref{eq:MNT-constant} is at most
one; taking $A=B=0$ shows that it is exactly one.  The general theorem allows
$\Gamma_1=R$ to be singular, and hence gives \eqref{eq:centered-product} for
every correlation matrix $R$.

\subsection{Symmetric rectangles and even functions}

We begin with the classical \v{S}id\'ak--Khatri inequality in the form needed
below \cite{Khatri1967,Sidak1967}, together with its equality case for finite
nondegenerate rectangles.  For a recent refinement, see \cite{AssoulineChorSadovsky2024}. For self-containment, we provide a direct proof.

\begin{lemma}\label[lemma]{lem:rectangles}
Let $V=(V_1,\ldots,V_m)$ be a centered Gaussian vector with
$\operatorname{Var}(V_i)=1$ for every $i$.  For
$r_1,\ldots,r_m\in[0,\infty]$,
\begin{equation}\label{eq:rectangle}
  \Pp\{|V_i|\leq r_i\text{ for all }i\}
  \geq\prod_{i=1}^m\Pp\{|Z|\leq r_i\},
  \qquad Z\sim\cN(0,1).
\end{equation}
If $0<r_i<\infty$ for every $i$, then equality in
\eqref{eq:rectangle} holds if and only if
$\operatorname{Cov}(V)=I_m$.
\end{lemma}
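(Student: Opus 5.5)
We argue by induction on $m$, in the form due to Šidák: one slab is split off at a time. The basic tool is a single-slab step.

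\emph{Single-slab step.} Let $K\subseteq\R^{m-1}$ be a symmetric convex set and $r\in[0,\infty]$. Then
\[
  \Pp\{|V_m|\leq r,\ (V_1,\ldots,V_{m-1})\in K\}
  \geq\Pp\{|V_m|\leq r\}\,\Pp\{(V_1,\ldots,V_{m-1})\in K\}.
\]
Applying this with $K$ the remaining symmetric rectangle, and then the induction hypothesis to $(V_1,\ldots,V_{m-1})$, gives \eqref{eq:rectangle}. The cases $r_i=0$ (both sides vanish, up to null sets) and $r_i=\infty$ (the factor drops out) are trivial.

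For the single-slab step, write $V'=(V_1,\ldots,V_{m-1})=\beta V_m+U$, with $U$ a centered Gaussian vector independent of $V_m$ and $\beta\in\R^{m-1}$. Set
\[
  g(t)=\Pp\{U+t\beta\in K\}.
\]
By Anderson's inequality, $g$ is even and nonincreasing in $|t|$, since $U$ has a symmetric log-concave law and $K$ is symmetric convex. Then
\[
  \Pp\{|V_m|\leq r,\ V'\in K\}=\E[g(V_m)\ind_{|V_m|\leq r}].
\]
The functions $g(V_m)$ and $\ind_{|V_m|\leq r}$ are both nonincreasing functions of $|V_m|$, so Chebyshev's association inequality for the single variable $|V_m|$ gives the product lower bound.

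\emph{Equality.} The "if" direction is immediate, since independence gives equality. For the converse, assume every $r_i\in(0,\infty)$ and that equality holds. The chain of inequalities then forces equality at every step. Fix a pair $i\neq j$, and order the induction so that $j$ is peeled first and $i$ is among the remaining coordinates. Equality in Chebyshev's inequality for two nonincreasing functions of $|V_j|$ forces one of them to be a.s. constant on the support of $|V_j|$, which is $[0,\infty)$. Since $0<r_j<\infty$, the indicator $\ind_{|V_j|\leq r_j}$ is not constant there. Hence $g$ must be constant on $[0,\infty)$.

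This is the main obstacle: we must show that $g$ constant forces $\beta=0$. Here $K$ is a finite rectangle with nonempty interior. If some $\beta_k\neq0$, then as $t\to\infty$ the $k$th coordinate $U_k+t\beta_k$ leaves $[-r_k,r_k]$ with probability tending to one, so $g(t)\to0$. But $g(0)>0$, because $U$ is centered and the rectangle contains a neighborhood of the origin (a degenerate $U$ still lies at $0$ with positive probability on the relevant support). This contradicts constancy, so $\beta=0$. Thus $V_j$ is uncorrelated with all the other coordinates, in particular with $V_i$.

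Repeating this for every choice of $j$ (reordering the induction to peel each coordinate first) gives $\operatorname{Cov}(V)=I_m$. The reordering is legitimate because the left side and the product in \eqref{eq:rectangle} are symmetric under relabeling, so equality of the full expression forces equality in every ordering's chain.
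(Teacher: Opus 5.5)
Your proposal is correct and follows essentially the same route as the paper: the one-slab induction with $V'=\beta V_m+U$, monotonicity of the section function $g$ (you cite Anderson's inequality where the paper gets an even log-concave function from Pr\'ekopa's theorem), and Chebyshev's covariance inequality in $|V_m|$. For equality you use the same observation as the paper, namely that $\beta\neq0$ makes $g$ positive near the origin but tending to $0$ at infinity, which rules out equality in the covariance step; one small precision is that a.e.\ constancy only fixes $g$ on $(0,\infty)$, so you need $g(t)>0$ for all small $|t|$ rather than just $g(0)>0$, and your neighborhood-of-the-origin argument (every neighborhood of $0$ has positive $U$-mass, not $U=0$ with positive probability) gives exactly that.
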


\begin{proof}
By monotone convergence, it is enough for the non-strict assertion to treat
finite radii.  We proceed by induction on $m$ and allow singular covariance
matrices throughout.  The case $m=1$ is equality.  For the induction step,
write
\[
  T=V_m,\qquad W=(V_1,\ldots,V_{m-1}),\qquad
  \beta=\operatorname{Cov}(W,T),
\]
and set $Y=W-\beta T$.  The Gaussian vectors $Y$ and $T$ are uncorrelated and
therefore independent, even when the joint covariance is singular.  With
\[
  A=\prod_{i<m}[-r_i,r_i],
  \qquad
  q(t)=\Pp\{Y+\beta t\in A\},
\]
symmetry of $A$ and $Y$ makes $q$ even.  The function $q$ is log-concave as
well.  Indeed, represent $Y=LZ'$ on its linear support, with $Z'$ a standard
Gaussian vector of the appropriate dimension.  Then
\[
  q(t)=\int \ind_A(Lz+\beta t)\,d\gamma_k(z)
\]
for a suitable $k$, and Pr\'ekopa's theorem
\cite{Prekopa1973,BrascampLieb1976} applies to the jointly log-concave
integrand.  Hence $q$ is nonincreasing on $[0,\infty)$.

Let $U=|T|$, and let $U'$ be an independent copy.  The functions
$u\mapsto q(u)$ and $u\mapsto\ind_{\{u\leq r_m\}}$ are both nonincreasing, so
\begin{align*}
  2\operatorname{Cov}\!\left(q(U),\ind_{\{U\leq r_m\}}\right)
  =\E\!\left[
    (q(U)-q(U'))
    (\ind_{\{U\leq r_m\}}-\ind_{\{U'\leq r_m\}})
  \right]\geq0.
\end{align*}
Consequently,
\begin{align*}
 \Pp\{V\in A\times[-r_m,r_m]\}
 &=\E q(T)\ind_{\{|T|\leq r_m\}}\\
 &\geq \E q(T)\,\Pp\{|Z|\leq r_m\}.
\end{align*}
The first factor on the right is the rectangle probability for $W$, so the
induction hypothesis proves \eqref{eq:rectangle}.

It remains to prove strictness.  Assume that every radius is finite and
positive and that $R:=\operatorname{Cov}(V)\neq I_m$.  After relabeling the
coordinates, we may arrange that the vector $\beta$
above is nonzero.  Choose $\delta>0$ so that
$[-2\delta,2\delta]^{m-1}\subset A$.  For all sufficiently small $|t|$,
$\lVert\beta t\rVert_\infty\leq\delta$, and hence
$\{\lVert Y\rVert_\infty\leq\delta\}\subseteq
\{Y+\beta t\in A\}$.  A centered Gaussian assigns positive mass to this
neighborhood of the origin relative to its linear support, so $q(t)>0$ for all
sufficiently small $|t|$.  On the other hand, choose $j<m$ with
$\beta_j\neq0$.  Then
\[
  q(t)\leq\Pp\{|Y_j+\beta_jt|\leq r_j\}\longrightarrow0
  \qquad(|t|\to\infty).
\]
We can therefore choose
$0<a<r_m<b$ such that $q(a)>q(b)$.  Since $U=|T|$ has a strictly positive
density on $(0,\infty)$, the integrand in the preceding covariance identity
is strictly positive on the event $\{U<a,\ U'>b\}$, which has positive
probability.  Thus the last-coordinate inequality is strict, and so is
\eqref{eq:rectangle}.  Conversely, when $R=I_m$, the coordinates are
independent and equality is immediate.
\end{proof}

\begin{remark}\label[remark]{rem:NT-rectangle-equality}
The equality statement can also be obtained from Nakamura and Tsuji's two-set
theorem \cite[Theorem~1.3]{NakamuraTsuji2025Centered} with some additional
effort; however, the direct argument above is shorter in the present setting.
Su et al.\ independently prove the same strict rectangle statement by first
isolating a strict bivariate gap and then using Royen's Gaussian correlation
theorem to group the remaining coordinate slabs
\cite[Lemma~9]{SuYangXuI2026}.
\end{remark}

\begin{lemma}\label[lemma]{lem:even-factors}
Under the hypotheses of \Cref{lem:rectangles}, let
$g_1,\ldots,g_m:\R\to[0,\infty)$ be bounded, even, and log-concave.  Then
\begin{equation}\label{eq:even-product}
  \E\prod_{i=1}^m g_i(V_i)
  \geq\prod_{i=1}^m\int g_i\,d\gamma_1.
\end{equation}
\end{lemma}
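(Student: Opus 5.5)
Since each $g_i$ is even and log-concave, it is nonincreasing in $|z|$ on $[0,\infty)$. The plan is to write each $g_i$ as a mixture of indicators of symmetric intervals (the layer-cake representation) and then apply \Cref{lem:rectangles} inside the mixture. For $t>0$ the superlevel set $\{g_i>t\}$ is a symmetric interval, open or closed, with half-length $\rho_i(t)\in[0,\infty]$. Hence
\[
  g_i(z)=\int_0^{\|g_i\|_\infty}\ind_{\{g_i(z)>t\}}\,dt.
\]

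Multiplying these representations and using Tonelli gives
\[
  \E\prod_{i=1}^m g_i(V_i)
  =\int_{\prod_i[0,\|g_i\|_\infty]}\Pp\{g_i(V_i)>t_i\ \text{for all } i\}\,dt_1\cdots dt_m .
\]
For fixed $(t_1,\ldots,t_m)$, each event $\{g_i(V_i)>t_i\}$ is $\{|V_i|<\rho_i\}$ or $\{|V_i|\le\rho_i\}$. Because each $V_i$ has a continuous law, the boundary points carry no mass: $\Pp\{|V_i|=\rho_i\}=0$ when $\rho_i<\infty$, and $\rho_i=0$ gives an event of probability zero. So we may replace the event by the closed rectangle $\{|V_i|\le\rho_i\}$, where $\rho_i=0$ is allowed and yields a null event.

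Now apply \Cref{lem:rectangles} pointwise in $t$. It bounds the rectangle probability from below by $\prod_i\Pp\{|Z|\le\rho_i\}$, which equals $\prod_i\gamma_1\{g_i>t_i\}$ by the same boundary argument for $Z$. Integrating in $t$ and applying Tonelli in reverse to the product measure gives $\prod_i\int g_i\,d\gamma_1$, which is \eqref{eq:even-product}.

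The only delicate point is measurability and the treatment of the interval endpoints. Measurability is routine, since everything is monotone in $t_i$. The endpoints are handled by the remark that Gaussian marginals with unit variance are atomless. Log-concavity is used only to guarantee that the superlevel sets are intervals, and evenness only to make them symmetric, so no further obstacle is expected.
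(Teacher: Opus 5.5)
Your proposal is correct and matches the paper's proof: you take the layer-cake representation over strict superlevel sets (symmetric intervals), apply Tonelli, apply \Cref{lem:rectangles} pointwise in $(t_1,\ldots,t_m)$, and use Tonelli again to recover $\prod_i\int g_i\,d\gamma_1$. The paper leaves implicit the passage from possibly open superlevel intervals to the closed rectangles of \Cref{lem:rectangles}, and your atomlessness argument handles it correctly.
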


\begin{proof}
Each strict superlevel set of an even log-concave function on $\R$ is an
interval centered at the origin, possibly empty or unbounded. Then the layer-cake formula, Tonelli's theorem,
and \Cref{lem:rectangles} give
\begin{align*}
 \E\prod_{i=1}^m g_i(V_i)
 &=\int_{[0,\infty)^m}
   \Pp\{g_i(V_i)>t_i\text{ for all }i\}\,dt_1\cdots dt_m\\
 &\geq\int_{[0,\infty)^m}
   \prod_{i=1}^m\Pp\{g_i(Z)>t_i\}\,dt_1\cdots dt_m\\
 &=\prod_{i=1}^m\E g_i(Z).
\end{align*}
\end{proof}

\subsection{Normalized self-convolution}

Normalize the functions in \Cref{thm:centered-product} by
\begin{equation}\label{eq:h-normalization}
  h_i=\frac{f_i}{\int f_i\,d\gamma_1}.
\end{equation}
Then $h_i\,d\gamma_1$ is a probability measure with mean zero.  For a
nonnegative function $h$, define
\begin{equation}\label{eq:selfconv-D-def}
  (\mathcal Dh)(u)=\int_{\R}
  h\!\left(\frac{u+v}{\sqrt2}\right)
  h\!\left(\frac{u-v}{\sqrt2}\right)d\gamma_1(v).
\end{equation}
If $Y,Y'$ are independent with law $h\,d\gamma_1$, the Gaussian
sum--difference rotation gives
\begin{equation}\label{eq:D-law}
  (\mathcal Dh)\,d\gamma_1
  =\operatorname{Law}\!\left(\frac{Y+Y'}{\sqrt2}\right).
\end{equation}
Indeed, for every bounded test function $\psi$,
\begin{align*}
 \E\psi\!\left(\frac{Y+Y'}{\sqrt2}\right)
 &=\iint \psi\!\left(\frac{y+y'}{\sqrt2}\right)
   h(y)h(y')\,d\gamma_1(y)d\gamma_1(y')\\
 &=\iint\psi(u)
   h\!\left(\frac{u+v}{\sqrt2}\right)
   h\!\left(\frac{u-v}{\sqrt2}\right)
   d\gamma_1(u)d\gamma_1(v).
\end{align*}

If $h$ is bounded and log-concave, then so is $\mathcal Dh$.  The bound
$\mathcal Dh\leq\norm{h}_\infty^2$ proves boundedness.  For log-concavity,
write the integral in
\eqref{eq:selfconv-D-def} with respect to Lebesgue measure and apply
Pr\'ekopa's theorem to
\[
  (u,v)\longmapsto
  h\!\left(\frac{u+v}{\sqrt2}\right)
  h\!\left(\frac{u-v}{\sqrt2}\right)\varphi(v).
\]
Equation \eqref{eq:D-law} shows that total mass, mean zero, and variance are
preserved.  Iterating, if $Y_1,Y_2,\ldots$ are i.i.d.\ with law
$h\,d\gamma_1$, then
\begin{equation}\label{eq:D-iteration-law}
  (\mathcal D^rh)\,d\gamma_1
  =\operatorname{Law}\!\left(
      2^{-r/2}\sum_{\ell=1}^{2^r}Y_\ell
    \right).
\end{equation}
The mean-zero assumption is essential: otherwise the mean would be multiplied
by $\sqrt2$ at each iteration.

For bounded functions $q_1,\ldots,q_m$, write
\begin{equation}\label{eq:Z-functional}
  \mathcal Z_R(q_1,\ldots,q_m)
  =\E_{\cN(0,R)}\prod_{i=1}^m q_i(X_i).
\end{equation}

\begin{lemma}\label[lemma]{lem:sum-difference}
For bounded log-concave $h_1,\ldots,h_m$ and every correlation matrix $R$,
\begin{equation}\label{eq:sum-difference}
  \mathcal Z_R(h_1,\ldots,h_m)^2
  \geq\mathcal Z_R(\mathcal Dh_1,\ldots,\mathcal Dh_m).
\end{equation}
\end{lemma}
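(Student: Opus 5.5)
Write $X\sim\cN(0,R)$ and let $X'$ be an independent copy. Then $\mathcal Z_R(h_1,\ldots,h_m)^2=\E\prod_i h_i(X_i)h_i(X_i')$. Apply the rotation
\[
  U=\frac{X+X'}{\sqrt2},\qquad V=\frac{X-X'}{\sqrt2}.
\]
Then $U$ and $V$ are independent, each with law $\cN(0,R)$, and $X_i=(U_i+V_i)/\sqrt2$, $X_i'=(U_i-V_i)/\sqrt2$. Hence
\[
  \mathcal Z_R(h)^2=\E_U\,\E_V\prod_{i=1}^m g_{i,U_i}(V_i),
  \qquad
  g_{i,u}(v)=h_i\!\left(\frac{u+v}{\sqrt2}\right)h_i\!\left(\frac{u-v}{\sqrt2}\right).
\]

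Now fix $u$. For each $i$, the function $v\mapsto g_{i,u_i}(v)$ is even, because replacing $v$ by $-v$ swaps the two factors. It is bounded by $\norm{h_i}_\infty^2$. It is log-concave, since it is a product of log-concave functions composed with affine maps of $v$. The vector $V$ is centered Gaussian with unit variances, so \Cref{lem:even-factors} applies conditionally on $U=u$:
\[
  \E_V\prod_i g_{i,u_i}(V_i)\geq\prod_i\int g_{i,u_i}\,d\gamma_1=\prod_i(\mathcal Dh_i)(u_i),
\]
where the last equality is the definition \eqref{eq:selfconv-D-def}. Integrating over $U\sim\cN(0,R)$ gives $\mathcal Z_R(\mathcal Dh_1,\ldots,\mathcal Dh_m)$, which proves \eqref{eq:sum-difference}.

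The main points to check are minor. Conditioning on $U$ is justified by independence together with Tonelli, and all integrands are nonnegative and bounded. If $g_{i,u_i}\equiv0$ for some $i$, the inequality holds trivially at that $u$. Singular $R$ causes no difficulty, because \Cref{lem:rectangles} and hence \Cref{lem:even-factors} allow singular covariance.
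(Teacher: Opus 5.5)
Your proposal is correct and matches the paper's proof essentially step for step: the sum--difference rotation $U=(X+X')/\sqrt2$, $V=(X-X')/\sqrt2$, conditioning on $U=u$, applying \Cref{lem:even-factors} to the bounded, even, log-concave factors $g_{i,u_i}$, and averaging over $U$. Your remarks on Tonelli, identically vanishing factors, and singular $R$ are accurate and spell out what the paper leaves implicit.
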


\begin{proof}
Let $X,X'$ be independent with law $\cN(0,R)$ and put
\[
  U=\frac{X+X'}{\sqrt2},\qquad
  V=\frac{X-X'}{\sqrt2}.
\]
Then $U$ and $V$ are independent and each has law $\cN(0,R)$.  Conditional
on $U=u$, the $i$th factor in the square of \eqref{eq:Z-functional} is
\[
  g_{i,u_i}(v)
  =h_i\!\left(\frac{u_i+v}{\sqrt2}\right)
   h_i\!\left(\frac{u_i-v}{\sqrt2}\right).
\]
As a function of $v$, it is bounded, even, and log-concave.  Applying
\Cref{lem:even-factors} to $V$ conditionally on $U=u$ gives
\begin{align*}
 \E_V\prod_i g_{i,u_i}(V_i)
 &\geq\prod_i\int g_{i,u_i}(v)\,d\gamma_1(v)\\
 &=\prod_i(\mathcal Dh_i)(u_i).
\end{align*}
Averaging over $U$ proves \eqref{eq:sum-difference}.
\end{proof}

\subsection{Iteration and the central limit theorem}

Set $h_{i,0}=h_i$, $h_{i,r+1}=\mathcal Dh_{i,r}$, and
\begin{equation}\label{eq:Qr}
  Q_r=\mathcal Z_R(h_{1,r},\ldots,h_{m,r}).
\end{equation}
By \Cref{lem:sum-difference},
\begin{equation}\label{eq:deficit-amplification}
  Q_{r+1}\leq Q_r^2,
  \qquad
  Q_r\leq Q_0^{2^r}.
\end{equation}

Suppose first that $R$ is positive definite.  Let $Y_{i,\ell}$ be i.i.d.\ with
law $h_i\,d\gamma_1$.  These variables have mean zero and finite moments of all orders.  Indeed, for
every $p>0$,
\[
  \int |y|^p h_i(y)\,d\gamma_1(y)
  \leq\norm{h_i}_\infty\int |y|^p\,d\gamma_1(y)<\infty.
\]
Their variances $\sigma_i^2$ are finite and strictly positive, since a
probability measure absolutely continuous with respect to Gaussian measure
cannot be a point mass.  By
\eqref{eq:D-iteration-law} and the central limit theorem,
\begin{equation}\label{eq:clt}
  h_{i,r}\,d\gamma_1
  \Longrightarrow\cN(0,\sigma_i^2).
\end{equation}

The density of $\cN(0,R)$ with respect to $\gamma_1^{\otimes m}$ is
\begin{equation}\label{eq:LR}
  L_R(x)=(\det R)^{-1/2}
  \exp\!\left[-\frac12x^{\trans}(R^{-1}-I)x\right].
\end{equation}
For fixed $L>0$, continuity and strict positivity imply
\[
  c_{R,L}:=\min_{[-L,L]^m}L_R>0.
\]
It follows from \eqref{eq:Qr} that
\begin{align}
 Q_r
 &\geq c_{R,L}\prod_{i=1}^m
   \int_{-L}^Lh_{i,r}(x)\,d\gamma_1(x)\notag\\
 &=c_{R,L}\prod_{i=1}^m
   \Pp\!\left\{
      \left|2^{-r/2}\sum_{\ell=1}^{2^r}Y_{i,\ell}\right|\leq L
   \right\}.
 \label{eq:Qr-lower}
\end{align}
By \eqref{eq:clt}, the product on the right converges to
$\prod_i\Pp\{|\sigma_iZ|\leq L\}>0$.  Therefore
\begin{equation}\label{eq:positive-liminf}
  \liminf_{r\to\infty}Q_r>0.
\end{equation}
If $Q_0<1$, however, \eqref{eq:deficit-amplification} forces $Q_r\to0$.
This contradiction shows that $Q_0\geq1$.  Undoing the normalization
\eqref{eq:h-normalization} proves \eqref{eq:centered-product} for
positive-definite $R$.

Finally, let $R$ be singular.  Let $W\sim\cN(0,I_m)$ be independent of
$X\sim\cN(0,R)$ and define
\[
  X^{(\varepsilon)}
  =\sqrt{1-\varepsilon}\,X+\sqrt\varepsilon\,W,
  \qquad
  \operatorname{Cov}(X^{(\varepsilon)})
  =(1-\varepsilon)R+\varepsilon I_m.
\]
The positive-definite result applies for every $\varepsilon>0$.  A nonzero
one-dimensional log-concave function is continuous in the interior of its
support and may be discontinuous only at its two endpoints.  Since every
$X_i$ has a continuous standard Gaussian marginal, the bounded function
$\prod_i f_i(x_i)$ is almost surely continuous at $X$.  Dominated convergence
therefore gives
\[
  \E\prod_i f_i(X_i)
  =\lim_{\varepsilon\downarrow0}
    \E\prod_i f_i(X_i^{(\varepsilon)})
  \geq\prod_i\int f_i\,d\gamma_1.
\]
This proves \Cref{thm:centered-product} for singular correlation matrices as
well.

\subsection{Strictness for the truncated exponential factors}

Su et al.\ independently insert their Royen-based rectangle lemma into the same
first sum--difference/self-convolution step to obtain the strict product
inequality for the truncated exponential factors
\cite[Lemma~10]{SuYangXuI2026}.  The argument below instead uses the direct
strict rectangle theorem \Cref{lem:rectangles}; after this strict first step,
both proofs invoke the non-strict centered product inequality for the convolved
factors.

\begin{proposition}\label[proposition]{prop:strict-tilted-product}
Let $R$ be an $m\times m$ correlation matrix with $R\neq I_m$, and let
$X\sim\cN(0,R)$.  For each $i$, let
\[
  f_i(z)=e^{a_i z}\ind_{(-\infty,b_i]}(z),
  \qquad a_i>0,\quad b_i>0,
\]
and suppose that $\int zf_i(z)\,d\gamma_1(z)=0$.  Then
\begin{equation}\label{eq:strict-tilted-product}
  \E\prod_{i=1}^m f_i(X_i)
  >\prod_{i=1}^m\int f_i\,d\gamma_1.
\end{equation}
The matrix $R$ may be singular.
\end{proposition}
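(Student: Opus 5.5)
We plan to run the self-convolution argument of the direct proof once more, but make its very first step strict. Normalize $h_i=f_i/\int f_i\,d\gamma_1$, so that $h_i\,d\gamma_1$ is a mean-zero probability measure. Writing $Q_0=\mathcal Z_R(h_1,\ldots,h_m)$, the goal is $Q_0>1$.

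The chain will be
\[
  Q_0^2>\mathcal Z_R(\mathcal Dh_1,\ldots,\mathcal Dh_m)\geq 1 .
\]
\Cref{lem:sum-difference} gives the first inequality non-strictly. For the second, each $\mathcal Dh_i$ is bounded, log-concave, and Gaussian-centered, with total mass one, by \eqref{eq:D-law}. Hence \Cref{thm:centered-product} applies to the convolved factors, for any correlation matrix $R$, singular or not. So the whole task reduces to proving strictness in the first inequality.

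We first treat positive-definite $R$; the singular case is handled afterwards. Set $U=(X+X')/\sqrt2$ and $V=(X-X')/\sqrt2$. The proof of \Cref{lem:sum-difference} applies \Cref{lem:even-factors} to $V$ conditionally on $U=u$, with even factors $g_{i,u_i}(v)=h_i((u_i+v)/\sqrt2)\,h_i((u_i-v)/\sqrt2)$. For $h_i\propto e^{a_iz}\ind_{(-\infty,b_i]}$, the exponentials multiply to $e^{\sqrt2 a_iu_i}$, independent of $v$. Therefore
\[
  g_{i,u_i}(v)=C\,e^{\sqrt2 a_iu_i}\,\ind_{\{|v|\leq \sqrt2 b_i-u_i\}},
\]
which is a constant times the indicator of a symmetric interval of radius $\rho_i(u)=\sqrt2b_i-u_i$. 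It is nonzero exactly when $u_i<\sqrt2 b_i$. Thus no layer-cake formula is needed: the conditional inequality is exactly \Cref{lem:rectangles} for the rectangle with radii $\rho_i(u)$. On the set
\[
  \{u:\ 0<\rho_i(u)<\infty\ \text{for all } i\}=\{u_i<\sqrt2 b_i\ \text{for all } i\},
\]
which has positive $\cN(0,R)$-probability because $b_i>0$ and $U\sim\cN(0,R)$ charges every neighbourhood of the origin in its support, the equality clause of \Cref{lem:rectangles} together with $\operatorname{Cov}(V)=R\neq I_m$ makes the conditional inequality strict. On this set the right-hand side is also strictly positive. Integrating over $U$ then gives strict inequality, provided the unconditional quantities are finite, which holds by boundedness.

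For singular $R$ the same argument goes through unchanged, because \Cref{lem:rectangles}, including its strict clause, was proved allowing singular covariance matrices. Only the positivity of the event $\{U<\sqrt2 b\}$ needs rechecking. It holds because $0$ lies in the support of $U$ and $\sqrt2 b$ has strictly positive coordinates, so a neighbourhood of $0$ within the linear support lies inside the event.

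The main obstacle is making sure the strict conditional gap survives integration. This requires the rectangle event to have positive $U$-measure, and the $u$-dependent weights $e^{\sqrt2a_iu_i}$ to be strictly positive there. Both are immediate in the truncated-exponential case, which is why we specialize rather than treat general log-concave factors. The remaining point is to confirm that $\mathcal Dh_i$ stays centered and bounded log-concave so that \Cref{thm:centered-product} closes the chain. This was recorded after \eqref{eq:D-law}, and combining the two steps yields $Q_0^2>1$, hence \eqref{eq:strict-tilted-product}.
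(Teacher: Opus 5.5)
Your proposal is correct and matches the paper's proof step for step. You make the first sum--difference step strict by observing that each $g_{i,u_i}$ is a positive multiple of the indicator of a symmetric interval of radius $\sqrt2 b_i-u_i$. You then apply the strict clause of \Cref{lem:rectangles} on the positive-probability set $\{u:\ u_i<\sqrt2 b_i \text{ for all } i\}$, and close with \Cref{thm:centered-product} applied to the centered convolved factors $\mathcal Dh_i$. The preliminary split into positive-definite and singular $R$ is unnecessary, as you note yourself, since the argument runs unchanged for singular $R$, just as in the paper.
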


\begin{proof}
Normalize the factors as in \eqref{eq:h-normalization}, and write
\[
  Q_0=\mathcal Z_R(h_1,\ldots,h_m).
\]
Let $U,V$ be independent with law $\cN(0,R)$.  In the sum--difference
calculation from \Cref{lem:sum-difference}, conditioning on $U=u$ gives
\begin{align*}
 g_{i,u_i}(v)
 &=h_i\!\left(\frac{u_i+v}{\sqrt2}\right)
   h_i\!\left(\frac{u_i-v}{\sqrt2}\right)\\
 &=C_i(u_i)\,
   \ind_{\{|v|\leq\sqrt2 b_i-u_i\}},
\end{align*}
where $C_i(u_i)>0$; if the displayed radius is negative, the interval is
understood to be empty.  Consider
\[
  E=\{u\in\R^m:u_i<\sqrt2 b_i\text{ for every }i\}.
\]
Because every $b_i$ is positive, $E$ contains a neighborhood of the origin.
Every centered Gaussian measure assigns positive mass to such a neighborhood
relative to its linear support, so $\Pp\{U\in E\}>0$, including when $R$ is
singular.

For every $u\in E$, all the radii $\sqrt2b_i-u_i$ are finite and strictly
positive.  Since $R\neq I_m$, the strict part of \Cref{lem:rectangles} gives
\[
  \E_V\prod_i g_{i,u_i}(V_i)
  >\prod_i\int g_{i,u_i}\,d\gamma_1.
\]
For arbitrary $u$, the corresponding non-strict inequality follows from
\Cref{lem:even-factors}.  Integrating over $U$ therefore makes the
sum--difference inequality strict:
\begin{equation}\label{eq:strict-sum-difference}
  Q_0^2>
  \mathcal Z_R(\mathcal Dh_1,\ldots,\mathcal Dh_m).
\end{equation}
The functions $\mathcal Dh_i$ are bounded and log-concave, and their Gaussian
integrals equal one.  By \eqref{eq:D-law}, their first moments under
$\gamma_1$ are also zero.  Applying \Cref{thm:centered-product} to these
functions shows that the right-hand side of
\eqref{eq:strict-sum-difference} is at least one.  Hence
$Q_0>1$.  Undoing the normalization proves
\eqref{eq:strict-tilted-product}.
\end{proof}

\begin{theorem}
\label[theorem]{thm:strict-orthant-refinement}
Under the hypotheses of \Cref{thm:orthant}, if $R\neq I_m$, then
\[
  \Pp\{X\leq c\one\}>\Phi(c)^m
  \qquad\text{for every finite }c\in\R.
\]
\end{theorem}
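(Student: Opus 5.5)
The plan is to rerun the proof of the stochastic comparison in \Cref{sec:orthant-proof} without change, replacing the single appeal to \Cref{thm:centered-product} by an appeal to \Cref{prop:strict-tilted-product}. Fix $c\in\R$ and a correlation matrix $R\succeq J/m$ with $R\neq I_m$. Take $s_*\in\R^m$ and $a_*,b_*$ from \Cref{thm:variational}. By \Cref{prop:tilt-functions}, $a_{*,i}=r(s_{*,i})>0$ and $b_{*,i}=H(s_{*,i})>0$. By \Cref{lem:local-centering}, each factor $f_i(z)=e^{a_{*,i}z}\ind_{(-\infty,b_{*,i}]}(z)$ satisfies the centering condition $\int zf_i\,d\gamma_1=0$ and has Gaussian mass $e^{a_{*,i}^2/2}\Phi(s_{*,i})$. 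These are exactly the hypotheses of \Cref{prop:strict-tilted-product}, and $R\neq I_m$ is the remaining hypothesis; singular $R$ is allowed.

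\Cref{prop:strict-tilted-product} therefore gives the strict inequality
\[
  \E\!\left[e^{a_*^{\trans}X}\ind_{\{X\leq b_*\}}\right]
  >e^{\|a_*\|^2/2}\prod_{i=1}^m\Phi(s_{*,i}).
\]
The compatibility condition \eqref{eq:variational-compatibility} together with \Cref{lem:gaussian-shift} identifies the left side as $e^{a_*^{\trans}Ra_*/2}\Pp\{X\leq c\one\}$. Dividing by this finite positive factor gives
\[
  \Pp\{X\leq c\one\}>\exp\!\left(\sum_i\log\Phi(s_{*,i})+\tfrac12a_*^{\trans}(I_m-R)a_*\right).
\]
By \eqref{eq:variational-bound}, the right side is at least $\Phi(c)^m$, which proves the strict inequality.

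I do not expect a serious obstacle, because all the strictness is contained in \Cref{prop:strict-tilted-product}. The only things to check are bookkeeping. First, the positivity $a_{*,i},b_{*,i}>0$ that the proposition requires follows from $r,H$ mapping into $(0,\infty)$. Second, the variational bound is non-strict, but strictness in one step of a chain of inequalities is enough. Third, the chain involves no infinite or zero quantities: $\Phi(s_{*,i})>0$, and all exponential factors are finite.
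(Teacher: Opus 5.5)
Your proposal is correct and matches the paper's own proof. You take the parameters from \Cref{thm:variational}, use \Cref{prop:strict-tilted-product} to make \eqref{eq:product-applied} strict, and combine the Gaussian shift identity with the non-strict variational bound \eqref{eq:variational-bound}; your bookkeeping checks (positivity of $a_*,b_*$, finiteness of the normalizing factors, strictness surviving a chain with one strict link) are exactly what that argument relies on.
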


\begin{proof}
Fix $c\in\R$ and take the vectors $s,a,b$ supplied by
\Cref{thm:variational}; in particular,
\[
  a_i=r(s_i)>0,
  \qquad b_i=s_i+a_i=H(s_i)>0,
  \qquad b-Ra=c\one.
\]
The factors in \eqref{eq:tilted-factors} are centered by
\eqref{eq:factor-first-moment}.  Since $R\neq I_m$,
\Cref{prop:strict-tilted-product} strengthens
\eqref{eq:product-applied} to a strict inequality.  Combining it with the
Gaussian shift identity \eqref{eq:event-shift} gives
\begin{align*}
 \Pp\{X\leq c\one\}
 &>
 \exp\!\left(\frac12a^{\trans}(I-R)a\right)
 \prod_{i=1}^m\Phi(s_i)\\
 &=\exp\!\left(
   \sum_{i=1}^m\log\Phi(s_i)
   +\frac12a^{\trans}(I-R)a
 \right)\\
 &\geq\Phi(c)^m,
\end{align*}
where the last inequality is \eqref{eq:variational-bound}.  This argument
applies directly to singular $R$, and $c$ was arbitrary.
\end{proof}

\section{Rigidity of the independent Gaussian maximum law}
\label{app:maximum-rigidity}

The equality statement in \Cref{thm:orthant} follows in the main text from the
strict product inequality tailored to the truncated exponential factors.
Dai and Mukherjea studied the broader identification problem for multivariate
normal families with a common correlation parameter and also for a
common-covariance family \cite{DaiMukherjea2001}.  The proposition below treats
a different comparison: equality with the independent standard maximum for an
arbitrary covariance matrix, including singular matrices and unequal marginal
variances.

Su et al.\ independently developed and machine-checked a related upper-tail
rigidity route under the normalized correlation-matrix hypotheses of the main
theorem \cite[Remark~1]{SuYangXuI2026}.  Their route passes from stochastic
domination and equality of one positive exponential moment to equality of the
maximum laws, and then uses bivariate upper-tail asymptotics to force $R=I_m$;
their formalization also proves Gram rigidity.  The argument below was
developed independently and requires neither unit marginal variances nor the
semidefinite condition $R\succeq J/m$.

Write $\overline\Phi(t)=1-\Phi(t)$.

\begin{lemma}\label[lemma]{lem:bivariate-upper-tails}
Let $(G,H)$ be a centered bivariate Gaussian vector with unit variances and
correlation $\rho\in[-1,1]$.  Then
\[
  \lim_{t\to\infty}
  \frac{\Pp\{G>t,H>t\}}{\overline\Phi(t)^2}
  =
  \begin{cases}
    0,&\rho<0,\\
    1,&\rho=0,\\
    +\infty,&\rho>0.
  \end{cases}
\]
\end{lemma}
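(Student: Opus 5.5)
The plan is to split by the sign of $\rho$ and use explicit Gaussian tail asymptotics throughout, together with the Mills ratio $\overline\Phi(t)\sim\varphi(t)/t$ as $t\to\infty$.

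\emph{Case $\rho=0$.} The variables $G$ and $H$ are independent, so the ratio is identically $1$.

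\emph{Degenerate cases $\rho=\pm1$.} If $\rho=1$, then $G=H$ almost surely. The ratio becomes $1/\overline\Phi(t)\to\infty$. If $\rho=-1$, then $H=-G$, so $\{G>t,H>t\}$ is empty for $t>0$ and the ratio is $0$.

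\emph{Case $|\rho|<1$, exact order of the joint tail.} Here I would write $H=\rho G+\sqrt{1-\rho^2}\,W$ with $W$ independent of $G$. Then
\[
  \Pp\{G>t,H>t\}=\int_t^\infty \varphi(g)\,\overline\Phi\!\left(\frac{t-\rho g}{\sqrt{1-\rho^2}}\right)dg .
\]
The target is a sharp enough exponential order, and the Laplace-type fact to use is that the joint tail decays like $\exp\{-t^2/(1+\rho)\}$ up to polynomial factors. Meanwhile $\overline\Phi(t)^2$ decays like $t^{-2}e^{-t^2}$.

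\emph{Case $0<\rho<1$.} I would only need a lower bound. Restricting to $G\in[t,t+1]$ gives $(t-\rho g)/\sqrt{1-\rho^2}\leq (1-\rho)t/\sqrt{1-\rho^2}+O(1)$. The integral is then at least $\overline\Phi(t+1)$ times $\overline\Phi(c_\rho t+O(1))$, where $c_\rho=\sqrt{(1-\rho)/(1+\rho)}<1$. Its logarithm is therefore $-\tfrac12(1+c_\rho^2)t^2+O(t)$. Since $1+c_\rho^2<2$, this beats $\log\overline\Phi(t)^2=-t^2+O(\log t)$, so the ratio tends to $\infty$.

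\emph{Case $-1<\rho<0$.} I would only need an upper bound. On the event, $G+H>2t$, and $G+H$ is centered Gaussian with variance $2(1+\rho)<2$. Hence
\[
  \Pp\{G>t,H>t\}\leq\overline\Phi\!\left(\frac{2t}{\sqrt{2(1+\rho)}}\right)\leq \exp\{-t^2/(1+\rho)\},
\]
and $1/(1+\rho)>1$. Dividing by $\overline\Phi(t)^2\geq c\,t^{-2}e^{-t^2}$ (from the Mills lower bound cited in \Cref{app:truncated-functions}) gives a ratio tending to $0$.

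The main obstacle is only bookkeeping. The polynomial prefactors must not spoil the comparison, and they cannot, because in both nonzero cases the gap is a strict difference of quadratic exponents. The same sum trick handles the negative case cleanly, so no delicate Laplace asymptotics are actually needed.
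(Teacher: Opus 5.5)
Your proposal is correct. For $\rho\in\{-1,0,1\}$ and for $-1<\rho<0$ it is the paper's argument: the same inclusion $\{G>t,H>t\}\subseteq\{G+H>2t\}$, followed by a comparison with Mills' lower bound.

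The only genuine difference is the case $0<\rho<1$.
\begin{itemize}
\item The paper integrates the joint density over the shrinking square $[t,t+t^{-1}]^2$. There the quadratic form equals $t^2/(1+\rho)+O_\rho(1)$ uniformly, which yields the sharp lower bound $c_\rho t^{-2}e^{-t^2/(1+\rho)}$.
\item You use the regression representation $H=\rho G+\sqrt{1-\rho^2}\,W$, restrict $G$ to $[t,t+1]$, and bound the conditional tail below by $\overline\Phi(c_\rho t)$ using only monotonicity of $\overline\Phi$.
\end{itemize}
Your route avoids any uniform expansion of the exponent. It loses a factor of order $e^{-t}$, but this is harmless because the gain over $\overline\Phi(t)^2$ is of order $e^{\rho t^2/(1+\rho)}$. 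Since $1+c_\rho^2=2/(1+\rho)$, your leading exponent agrees exactly with the paper's.

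One small correction: $\Pp\{t\le G\le t+1\}$ is not $\overline\Phi(t+1)$. It is, however, at least $\varphi(t+1)$, and Mills' upper bound gives $\varphi(t+1)\ge\overline\Phi(t+1)$ for $t\ge0$. So the lower bound you state does hold.
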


\begin{proof}
The case $\rho=0$ is independence.  If $-1<\rho<0$, then
$\{G>t,H>t\}\subseteq\{G+H>2t\}$ and
$G+H\sim\cN(0,2(1+\rho))$.  Hence
\[
  \Pp\{G>t,H>t\}
  \leq
  \overline\Phi\!\left(\sqrt{\frac{2}{1+\rho}}\,t\right).
\]
Mills' bounds show that the right-hand side is
$o(\overline\Phi(t)^2)$.  For $\rho=-1$, the joint exceedance probability is
zero for $t>0$.

Now suppose $0<\rho<1$.  The joint density is
\[
  f_\rho(x,y)
  =\frac{1}{2\pi\sqrt{1-\rho^2}}
    \exp\!\left(
      -\frac{x^2-2\rho xy+y^2}{2(1-\rho^2)}
    \right).
\]
Integrate over $[t,t+t^{-1}]^2$.  Writing
$x=t+u/t$ and $y=t+v/t$, where $u,v\in[0,1]$, gives, uniformly on this square,
\[
  \frac{x^2-2\rho xy+y^2}{2(1-\rho^2)}
  =\frac{t^2}{1+\rho}+O_\rho(1).
\]
Thus, for all sufficiently large $t$,
\[
  \Pp\{G>t,H>t\}
  \geq c_\rho t^{-2}
       \exp\!\left(-\frac{t^2}{1+\rho}\right).
\]
Since Mills' upper bound gives
$\overline\Phi(t)^2\leq(2\pi t^2)^{-1}e^{-t^2}$, the ratio tends to
infinity.  If $\rho=1$, then $G=H$ almost surely and the ratio equals
$1/\overline\Phi(t)\to\infty$.
\end{proof}

\begin{proposition}\label[proposition]{prop:maximum-law-rigidity}
Let $Y=(Y_1,\ldots,Y_m)$ be any centered Gaussian vector with covariance
matrix $S$, and let $Z_1,\ldots,Z_m$ be independent standard Gaussian random
variables.  Then
\[
  \max_iY_i\stackrel{d}{=}\max_iZ_i
  \qquad\Longleftrightarrow\qquad
  S=I_m.
\]
\end{proposition}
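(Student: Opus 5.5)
The reverse implication is immediate, so the work is to show that $\max_iY_i\stackrel{d}{=}\max_iZ_i$ forces $S=I_m$. The plan has three steps: read off the marginal variances from the upper tail, then the off-diagonal correlations from the second-order upper tail, and finally rule out the borderline configurations by a lower-tail argument.

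\textbf{Step 1: variances from the first-order upper tail.} The hypothesis gives, for all $t$,
\[
\Pp\{\max_iY_i>t\}=1-\Phi(t)^m\sim m\overline\Phi(t).
\]
Put $\sigma=\max_i S_{ii}^{1/2}$. If $\sigma=0$, the maximum is identically $0$, which is impossible. The union bound and the single-coordinate lower bound give
\[
\overline\Phi(t/\sigma)\leq\Pp\{\max_iY_i>t\}\leq m\,\overline\Phi(t/\sigma).
\]
Comparing with $m\overline\Phi(t)$ via Mills' ratio forces $\sigma=1$. Then
\[
\Pp\{\max_iY_i>t\}=\sum_{i:\,S_{ii}=1}\overline\Phi(t)+o(\overline\Phi(t)),
\]
because coordinates with variance $<1$ contribute $\overline\Phi(t/\sigma_i)=o(\overline\Phi(t))$. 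Hence exactly $m$ coordinates have unit variance, so $S_{ii}=1$ for every $i$ and $S$ is a correlation matrix.

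\textbf{Step 2: correlations from the second-order upper tail.} Use inclusion--exclusion (Bonferroni) to second order:
\[
\Pp\{\max Y>t\}=m\overline\Phi(t)-\sum_{i<j}\Pp\{Y_i>t,Y_j>t\}+O\Bigl(\sum_{i<j<k}\Pp\{Y_i,Y_j,Y_k>t\}\Bigr).
\]
For the independent vector this reads $m\overline\Phi(t)-\binom m2\overline\Phi(t)^2+O(\overline\Phi(t)^3)$. Suppose first that some correlation $\rho_{ij}>0$. By \Cref{lem:bivariate-upper-tails}, the corresponding pair term dominates $\overline\Phi(t)^2$. I need triple intersections to be $o$ of the largest pair term, which I would get by bounding each triple intersection by a pair intersection. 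The subtle case is when the largest pair term comes from a highly correlated pair and some triple is comparably large, for example a triple with all correlations equal to $1$. To avoid this, I would compare exactly: the indicator inequality
\[
\ind\{\max>t\}\leq\sum_i\ind_i-\max_{j\neq i}\ind_i\ind_j
\]
yields $\Pp\{\max Y>t\}\leq m\overline\Phi(t)-\max_{i<j}\Pp\{Y_i>t,Y_j>t\}$. This gives
\[
\max_{i<j}\Pp\{Y_i>t,Y_j>t\}\leq m\overline\Phi(t)-(1-\Phi(t)^m)=O(\overline\Phi(t)^2),
\]
which contradicts \Cref{lem:bivariate-upper-tails} when some $\rho_{ij}>0$. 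So all correlations satisfy $\rho_{ij}\leq0$.

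\textbf{Step 3: excluding negative correlations, the main obstacle.} With all $\rho_{ij}\leq0$ and one strictly negative, the lemma says that pair term is $o(\overline\Phi(t)^2)$. The full Bonferroni lower bound $\Pp\{\max>t\}\geq m\overline\Phi(t)-\sum_{i<j}\Pp\{Y_i,Y_j>t\}$ then exceeds $m\overline\Phi(t)-\binom m2\overline\Phi(t)^2+o(\overline\Phi(t)^2)$, while the left side equals $m\overline\Phi(t)-\binom m2\overline\Phi(t)^2+O(\overline\Phi(t)^3)$. This is only consistent if every pair term is asymptotically $\overline\Phi(t)^2$, but pairs with $\rho=0$ give exactly $\overline\Phi(t)^2$ and negative pairs give $o(\overline\Phi(t)^2)$, which is a contradiction. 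Hence all $\rho_{ij}=0$ and $S=I_m$.

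This step is where I expect trouble, because I must make sure the error from triples really is $O(\overline\Phi(t)^3)$ or smaller. Bonferroni's inequality gives the needed lower bound directly, so it seems fine. If the bookkeeping fails, my fallback is the lower tail: $\Pp\{\max Y\leq -t\}=\Pp\{Y\geq t\one\}$ must equal $\overline\Phi(t)^m$, and for nonpositive correlations with some $\rho<0$ this is $o(\overline\Phi(t)^m)$ by the Gaussian orthant tail exponent $\tfrac12\one^{\trans}S^{-1}\one>m/2$, at least when $S$ is nonsingular.
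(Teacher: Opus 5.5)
Your proposal is correct and follows essentially the paper's route: first-order upper-tail asymptotics force every $S_{ii}=1$. The deficit $m\overline\Phi(t)-\Pp\{\max_iY_i>t\}=\E(N_t-1)_+$, where $N_t=\sum_i\ind_{\{Y_i>t\}}$, is then bounded below by a single pair exceedance, which rules out positive correlations via \Cref{lem:bivariate-upper-tails}. It is bounded above by the Bonferroni sum of pair exceedances, which rules out negative correlations. This is exactly how the paper uses $\frac2m\binom k2\le(k-1)_+\le\binom k2$.

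One small correction: in Step 1 your displayed asymptotic \emph{equality} is justified only as an upper bound, via the union bound. That upper bound is all you need to conclude every $S_{ii}=1$. Your lower-tail fallback is also unnecessary.
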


\begin{proof}
Only the forward implication requires proof.  Put
$p_t=\overline\Phi(t)$ and $M_Y=\max_iY_i$.  Equality of the maximum laws gives
\begin{equation}\label{eq:maximum-tail-first-order}
  \frac{\Pp\{M_Y>t\}}{p_t}
  =\frac{1-(1-p_t)^m}{p_t}
  \longrightarrow m.
\end{equation}
Let $\sigma_i^2=S_{ii}$.  Mills' bounds imply, for every $\sigma\geq0$,
\[
  \frac{\overline\Phi(t/\sigma)}{\overline\Phi(t)}
  \longrightarrow
  \begin{cases}
    0,&0\leq\sigma<1,\\
    1,&\sigma=1,\\
    +\infty,&\sigma>1,
  \end{cases}
\]
where the numerator is understood to be zero for $t>0$ when $\sigma=0$.
If some $\sigma_i>1$, then
$\Pp\{M_Y>t\}\geq\Pp\{Y_i>t\}$, contradicting
\eqref{eq:maximum-tail-first-order}.  Hence every $\sigma_i\leq1$.  The union
bound and \eqref{eq:maximum-tail-first-order} now give
\[
  m
  \leq\#\{i:\sigma_i=1\}
  \leq m.
\]
Thus all marginal variances equal one.

For $t>0$, set $A_i(t)=\{Y_i>t\}$ and
$N_t=\sum_i\ind_{A_i(t)}$.  Since $\E N_t=mp_t$,
\begin{equation}\label{eq:overlap-deficit}
  D_t:=mp_t-\Pp\{M_Y>t\}=\E(N_t-1)_+.
\end{equation}
Equality of the maximum laws yields
\begin{equation}\label{eq:overlap-deficit-limit}
  \frac{D_t}{p_t^2}
  =\frac{mp_t-[1-(1-p_t)^m]}{p_t^2}
  \longrightarrow\binom m2.
\end{equation}
For every integer $0\leq k\leq m$,
\[
  \frac2m\binom k2\leq(k-1)_+\leq\binom k2.
\]
Applying this pointwise to $N_t$ gives
\begin{equation}\label{eq:overlap-pair-bounds}
  \frac2m\sum_{i<j}\Pp(A_i(t)\cap A_j(t))
  \leq D_t
  \leq\sum_{i<j}\Pp(A_i(t)\cap A_j(t)).
\end{equation}
Let $\rho_{ij}=S_{ij}$.  If some $\rho_{ij}>0$, the left inequality in
\eqref{eq:overlap-pair-bounds} and \Cref{lem:bivariate-upper-tails} imply
$D_t/p_t^2\to\infty$, contradicting
\eqref{eq:overlap-deficit-limit}.  Hence every $\rho_{ij}\leq0$.

For nonpositive correlations, \Cref{lem:bivariate-upper-tails} and the right
inequality in \eqref{eq:overlap-pair-bounds} give
\[
  \binom m2
  =\lim_{t\to\infty}\frac{D_t}{p_t^2}
  \leq\#\{(i,j):i<j,\ \rho_{ij}=0\}
  \leq\binom m2.
\]
Therefore every off-diagonal covariance is zero, and $S=I_m$.  The converse
is immediate from independence.
\end{proof}

\section{Relation to two prior announced proofs}
\label{app:prior-proofs}

Goldsmith \cite{Goldsmith2021} and Pastore \cite{Pastore2023} announce proofs
of the Simplex Mean Width and Weak Simplex conjectures, respectively.  We
record why we do not use the current posted versions as prior proofs.  The
discussion concerns only arXiv:2112.03393v2, revised 27 June 2023, and
arXiv:2306.13478v2, revised 13 November 2023.  All page, section, theorem, and
equation references below refer to those versions.  It is not a claim that
either conjecture is false, that either strategy cannot be repaired, or that
the authors have no later argument.

\subsection{Goldsmith's centroid-monotonicity step}

The dependency in \cite{Goldsmith2021} is
\[
  \text{Theorem 3.1}
  \Longrightarrow
  \text{Corollary 3.2}
  \Longrightarrow
  \text{Theorem 4.1}.
\]
Theorem~3.1 is a strict monotonicity statement for spherical centroids under a
longitudinal shift.  On pp.~7--9, its proof uses local approximations and
reduces the pointwise Pr\'ekopa--Leindler hypothesis to a sufficient
inequality.  At the end of p.~9, the manuscript observes that this inequality
fails as the dimension tends to infinity and derives only a restricted
angular condition.  The proof then ends without another estimate verifying
the required pointwise hypothesis.  Corollary~3.2 invokes Theorem~3.1, and
Theorem~4.1 invokes Corollary~3.2.  Thus the posted argument does not establish
the claimed all-dimensional theorem.

\subsection{Pastore's centroid-injectivity step}

The injectivity argument in Section~III-D of \cite{Pastore2023} defines
conditional means $t$ and $t'$ over the differences
$\widetilde D\setminus\widetilde D'$ and
$\widetilde D'\setminus\widetilde D$ in (18c)--(18d).  The appendix obtains
the set inclusions (23b)--(23c) and then treats the means as points of the
corresponding differences when deriving the sign statements (24b)--(24c).
A conditional mean belongs to the closed convex hull of its support, not
necessarily to the support itself, and a difference of convex cones need not
be convex.  The failed inequality in (24b) or (24c) may depend on the sampled
point and need not survive averaging.

The issue occurs within the Gaussian simplicial-cone setting.  In $\R^3$ let
\[
 D=\operatorname{cone}(e_1,e_2,e_3),\qquad
 w_a=\frac{(1,a,a)}{\sqrt{1+2a^2}},\qquad
 D'_a=\operatorname{cone}(w_a,e_2,e_3),
\]
where $a>0$.  Then
\[
 D'_a=\{x_1>0,\ x_2>a x_1,\ x_3>a x_1\}\subset D.
\]
Let $Z\sim\cN(0,I_3)$, put $A_a=D\setminus D'_a$, and write
$Q=1-\Phi$.  Define
\begin{align}
 I_a
 &:=\int_0^\infty\varphi(x)\varphi(ax)Q(ax)\,dx \notag\\
 &=\frac{\varphi(0)}{\sqrt{1+a^2}}
   \left[
     \frac14-\frac1{2\pi}
       \arctan\!\left(\frac{a}{\sqrt{1+a^2}}\right)
   \right].                                                   \label{eq:Pastore-Ia}
\end{align}
For fixed $Z_1=x>0$, the event $A_a$ has conditional probability
$1/4-Q(ax)^2$.  Integration by parts therefore gives
\begin{align}
 \E[Z_1\ind_{A_a}]
 &=\int_0^\infty x\varphi(x)\bigl(1/4-Q(ax)^2\bigr)\,dx
   =2aI_a.                                                     \label{eq:Pastore-moment1}
\end{align}
Similarly, subtracting the contribution of
$\{Z_2>ax,Z_3>ax\}$ from that of $\{Z_2>0,Z_3>0\}$ gives
\begin{align}
 \E[Z_2\ind_{A_a}]=\E[Z_3\ind_{A_a}]
 &=\int_0^\infty\varphi(x)
    \left(\frac{\varphi(0)}2-\varphi(ax)Q(ax)\right)dx \notag\\
 &=\frac{\varphi(0)}4-I_a.                                      \label{eq:Pastore-moment23}
\end{align}
For $a=1/4$, equations \eqref{eq:Pastore-Ia}--\eqref{eq:Pastore-moment23}
yield
\[
 \frac{\E[Z_2\ind_{A_a}]}{\E[Z_1\ind_{A_a}]}
 =\frac{\E[Z_3\ind_{A_a}]}{\E[Z_1\ind_{A_a}]}
 \approx0.42958>\frac14.
\]
Normalization by $\Pp(A_a)$ does not change these ratios.  Hence
\[
  \E[Z\mid Z\in D\setminus D'_{1/4}]
  \in\operatorname{int}D'_{1/4},
\]
not in $D\setminus D'_{1/4}$.

This example can also be placed under the centroid hypothesis used in (18c).
For the positive orthant $D$, let $G_c\sim\cN(c,\sigma^2I_3)$ with
$c\in D\cap S^2$.  Then
\[
  \Pp\{G_c\in D\}=\prod_{j=1}^3\Phi(c_j/\sigma).
\]
As a function of $q_j=c_j^2$, the logarithm of each factor is strictly
concave: its derivative is proportional to
$r(\sqrt{q_j}/\sigma)/\sqrt{q_j}$, which is strictly decreasing.  Symmetry and
strict concavity therefore give the unique centroid
$c_D=(1,1,1)/\sqrt3$.  If $X_\sigma=c_D+\sigma Z$, conic homogeneity and
dominated convergence imply
\[
 \frac1\sigma\E[X_\sigma\mid X_\sigma\in A_{1/4}]
 \longrightarrow
 \E[Z\mid Z\in A_{1/4}]
 \in\operatorname{int}D'_{1/4}.
\]
Thus, for all sufficiently large $\sigma$, the conditional mean in (18c),
with the relevant Gaussian centroid, lies in $D'_{1/4}$ rather than in
$D\setminus D'_{1/4}$.  The sign conclusions (24b)--(24c), used in
(19)--(20) to prove injectivity, therefore do not follow.

There is a second, independent issue in the same appendix.  It asserts that
the oriented facet normals $e_1,e_i,e_i'$ are pairwise linearly independent,
so every coefficient in
$\lambda_1e_1+\lambda_ie_i+\lambda_i'e_i'=0$ is nonzero and one may normalize
$\lambda_1=1$ before defining the partition in (22).  Let
\[
 \widetilde V=(u_1,u_2,u_3),\qquad
 \widetilde V'=\left(\frac{u_1+u_3}{\sqrt2},u_2,u_3\right),
\]
where $u_1,u_2,u_3$ are the standard basis.  Both triples are linearly
independent and differ only in their first generator.  For $i=2$,
\[
 \operatorname{span}(u_1,u_3)
 =\operatorname{span}\!\left(\frac{u_1+u_3}{\sqrt2},u_3\right),
\]
so the corresponding oriented normals satisfy $e_2=e_2'=u_2$.  The valid
dependence $e_2-e_2'=0$ has $\lambda_1=0$, and the normalization and case
partition in (22) do not cover this admissible configuration.

Both issues occur in the injectivity result used in Section~III-E to deduce
reflection symmetry.  They are independent of the present proof.  We
therefore cite \cite{Goldsmith2021,Pastore2023} as prior announced proofs but
do not use their asserted conclusions.

\section{Proof of the finite-energy AWGN corollary}
\label{app:awgn}

We prove \Cref{cor:finite-energy}.  The channel is the real AWGN sequence
model
\begin{equation}\label{eq:awgn-model}
  Y=X+Z,
  \qquad Z_j\stackrel{\mathrm{iid}}{\sim}\cN(0,N_0/2).
\end{equation}
The encoder is deterministic, the $M$ messages are equiprobable, there is no
feedback, and every codeword $c_i\in\ell_2$ satisfies
$\norm{c_i}^2\leq E$.  The decoder is arbitrary, and there is no constraint on
the number of channel uses or nonzero codeword coordinates.  This is the
finite-energy model of \cite[Definition~1]{PolyanskiyPoorVerdu2011}.

The output sequence $Z$ is almost surely not in $\ell_2$, so the observation
cannot be projected directly onto a finite-dimensional subspace of $\ell_2$.
Nevertheless, the likelihood ratios depend on finitely many Gaussian linear
statistics.  We construct those statistics first.

\subsection{Finite-dimensional sufficient statistics}

Work on the canonical output space
\[
  \Omega=\R^{\mathbb N},
  \qquad
  \mathcal A=\mathcal B(\R)^{\otimes\mathbb N},
\]
with coordinate maps $Y_k(y)=y_k$.  Let $P_i$ be the product Gaussian law on
$(\Omega,\mathcal A)$ when $c_i$ is transmitted, and set
\[
  D=\operatorname{span}\{c_i-c_1:1\leq i\leq M\}\subset\ell_2.
\]
Choose an orthonormal basis $u_1,\ldots,u_r$ of $D$, where $r\leq M-1$, and
define the measurable partial sums
\[
  S_{j,n}(y)=\sum_{k=1}^n u_{j,k}y_k.
\]
We use one measurable version of the limit for all hypotheses:
\begin{equation}\label{eq:sufficient-statistic}
  T_j(y)=
  \begin{cases}
    \displaystyle\lim_{n\to\infty}S_{j,n}(y),
      &\text{if the limit exists},\\[1ex]
    0,&\text{otherwise},
  \end{cases}
  \qquad 1\leq j\leq r.
\end{equation}
The convergence set is Borel and the limit there is a pointwise limit of
measurable functions, so $T_j$ is $\mathcal A$-measurable.

Under message $i$, write $Y_k=c_{i,k}+Z_k$.  The deterministic series
$\sum_k u_{j,k}c_{i,k}$ converges by Cauchy--Schwarz.  The random series
$\sum_k u_{j,k}Z_k$ consists of independent centered Gaussian variables and
\[
  \sum_{k=1}^{\infty}\operatorname{Var}(u_{j,k}Z_k)
  =\frac{N_0}{2}\sum_{k=1}^{\infty}u_{j,k}^2
  =\frac{N_0}{2}.
\]
It therefore converges almost surely and in $L^2(P_i)$.  Thus
\eqref{eq:sufficient-statistic} is a common version under every $P_i$, and
\begin{equation}\label{eq:T-law}
  T(Y)\sim\cN\!\left(
      (\ip{c_i}{u_j})_{j=1}^r,\frac{N_0}{2}I_r
    \right)
  \qquad\text{under }P_i.
\end{equation}

For $d_i=c_i-c_1$, write $d_i=\sum_{j=1}^r\beta_{ij}u_j$ and define
\[
  \ip{Y}{d_i}:=\sum_{j=1}^r\beta_{ij}T_j(Y).
\]
Under every hypothesis this agrees almost surely with
$\lim_n\sum_{k=1}^n d_{i,k}Y_k$.  The Cameron--Martin formula
\cite{Bogachev1998} gives
\begin{equation}\label{eq:infinite-likelihood-ratio}
  L_i(Y):=\frac{dP_i}{dP_1}(Y)
  =\ell_i(T(Y)),
\end{equation}
where
\begin{equation}\label{eq:finite-likelihood-function}
  \ell_i(t)
  =\exp\!\left\{
      \frac{2}{N_0}\sum_{j=1}^r\beta_{ij}t_j
      -\frac1{N_0}\bigl(\norm{c_i}^2-\norm{c_1}^2\bigr)
    \right\}.
\end{equation}
To justify \eqref{eq:infinite-likelihood-ratio}, let $L_{i,n}$ be the
likelihood ratio based on the first $n$ coordinates.  Then $(L_{i,n})_n$ is a
nonnegative $P_1$-martingale and
\[
  \sup_n\E_{P_1}L_{i,n}^2
  \leq \exp\!\left(\frac{2\norm{d_i}^2}{N_0}\right)<\infty.
\]
Hence $L_{i,n}$ converges in $L^1(P_1)$ to the expression in
\eqref{eq:infinite-likelihood-ratio}.  In particular, every likelihood ratio
is measurable with respect to $T=(T_1,\ldots,T_r)$.

Let $\ell_1\equiv1$ and let $\overline P=M^{-1}\sum_{i=1}^M P_i$ be the output
law under the uniform prior.  Bayes' formula gives
\begin{equation}\label{eq:posterior-sufficient}
  \Pp\{I=i\mid Y\}
  =\frac{\ell_i(T(Y))}{\sum_{k=1}^M \ell_k(T(Y))}
  \qquad \overline P\text{-almost surely}.
\end{equation}
Thus a Bayes-optimal decoder, equivalently a maximum-likelihood decoder under
the uniform prior, is a function of $T$.  Conversely, every decoder based on
$T$ is also available in the full experiment.  The sequence model and the
finite-dimensional Gaussian model in \eqref{eq:T-law} therefore have the same
optimal probability of correct decoding.

\subsection{Optimality of a full-energy regular simplex}

\begin{proposition}\label[proposition]{prop:awgn-reduction}
Among all $M$-message codes in the preceding model, the optimal average
probability of correct decoding is attained by a centered regular
$(M-1)$-simplex whose vertices have squared norm $E$.
\end{proposition}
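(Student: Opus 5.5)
The proof follows the chain sketched in the proof of \Cref{cor:finite-energy}, now carried out inside the finite-dimensional Gaussian model \eqref{eq:T-law}. Fix an admissible codebook $c_1,\ldots,c_M$. By the sufficiency result \eqref{eq:posterior-sufficient}, its optimal success probability equals that of the shift experiment $T\sim\cN(\mu_i,\frac{N_0}{2}I_r)$ with $\mu_i=(\ip{c_i}{u_j})_j$. Since $\mu_i-\mu_1$ are the coordinates of $c_i-c_1$ in an orthonormal basis of $D$, the configuration $(\mu_i)$ is congruent to $(c_i-c_1)$. Translating all signals by a common vector does not change any likelihood ratio, so the success probability depends only on the difference configuration.

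\textbf{Step 1 (energy equalization).} Let $p_i$ be the component of $c_i$ in $D^\perp$. The differences $p_i-p_1$ vanish, so $p_i=p$ is the same for all $i$. Write $c_i=c_1^{D}+d_i'+p$, where $d_i'\in D$. Now append one independent coordinate $N\sim\cN(0,N_0/2)$ to the observation, and replace each $c_i$ by $\widetilde c_i=(c_i,\sqrt{E-\norm{c_i}^2})$. A decoder can ignore the extra coordinate, so the optimal success probability does not decrease. After this step every codeword has squared norm exactly $E$.

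\textbf{Step 2 (reduction to the equal-radius sphere).} The augmented vectors lie on a sphere of radius $\sqrt E$. Let $\widetilde D$ be the span of their differences and $w$ their common component orthogonal to $\widetilde D$. The projections $\widetilde c_i-w$ lie in $\widetilde D$, which has dimension at most $M-1$. They all have the same norm $\rho=\sqrt{E-\norm w^2}\le\sqrt E$. Projecting away $w$ is a common translation, so the success probability is unchanged. By the sufficiency argument, only the component of the observation in $\widetilde D$ matters.

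\textbf{Step 3 (simplex bound).} Embed $\widetilde D$ into $\R^{M-1}$ and rescale the noise to unit variance. The signals become $\lambda x_i$ with unit vectors $x_i\in\R^{M-1}$ and $\lambda=\rho\sqrt{2/N_0}$. The success probability is then $P_{\mathrm c}(x_1,\ldots,x_M;\lambda)$, and \Cref{cor:wsc} with $d=M-1$ bounds it by the value for the regular simplex at the same $\lambda$. The degenerate case $\rho=0$ is trivial, since then all codewords coincide and the success probability is $1/M$.

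\textbf{Step 4 (monotonicity in the radius).} The value for the regular simplex must be shown nondecreasing in $\rho$; this is the main remaining point. The plan is a degradation argument. Let $\rho'<\rho$ and observe $Y=\rho x_I+Z$. Form $Y'=\frac{\rho'}{\rho}Y+\sqrt{1-(\rho'/\rho)^2}\,Z''$ with independent noise $Z''$. Then $Y'$ has exactly the law of the radius-$\rho'$ experiment. Any decoder for $Y'$ therefore yields a randomized decoder for $Y$ with the same success probability, and randomized decoders cannot beat ML. Monotonicity follows, so radius $\sqrt E$ is optimal.

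Chaining Steps 1–4, every admissible code is bounded by the centered regular simplex of circumradius $\sqrt E$. This simplex is itself admissible: realize it in the first $M-1$ coordinates of $\ell_2$. Hence it attains the optimum. The delicate bookkeeping is in Steps 1–2: one must verify that the sufficient-statistic reduction still applies after augmentation and projection, and that the dimension of $\widetilde D$ is at most $M-1$ so that \Cref{cor:wsc} applies after padding with zeros.
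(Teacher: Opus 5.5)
Your proposal is correct and follows the paper's proof essentially step for step. You equalize energies with one appended signal/noise coordinate, discard the common component $w$ orthogonal to the difference span (leaving means of common radius at most $\sqrt E$ in dimension at most $M-1$), bound by the regular simplex of that radius via \Cref{cor:wsc} (the paper cites \Cref{cor:mgf} with the maximum-likelihood identity, which amounts to the same thing), and finish with the Gaussian degradation $\eta Y+\sigma\sqrt{1-\eta^2}Z'$ giving monotonicity in the radius. The only cosmetic difference is that the decomposition of $c_i$ relative to $D^\perp$ at the start of your Step 1 is never used and can be dropped.
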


\begin{proof}
Start with arbitrary codewords $c_1,\ldots,c_M\in\ell_2$ satisfying
$\norm{c_i}^2\leq E$.  Append one independent signal/noise coordinate; after
relabeling the countable coordinates, this is the isometric embedding
$\ell_2\hookrightarrow\ell_2\oplus\R\cong\ell_2$.  Set
\begin{equation}\label{eq:energy-equalization}
  \widetilde c_i
  =\left(c_i,\sqrt{E-\norm{c_i}^2}\right).
\end{equation}
Every augmented codeword has squared norm $E$.  A decoder in the augmented
experiment may ignore the additional observation coordinate and use the
original decoder.  Thus optimal correct-decoding probability cannot decrease
under this augmentation.

Let
\[
  \widetilde D
  =\operatorname{span}\{\widetilde c_i-\widetilde c_1:1\leq i\leq M\}.
\]
By the sufficient-statistic construction above, the augmented experiment is
equivalent to a Gaussian shift experiment on $\widetilde D$.  All
$\widetilde c_i$ have the same orthogonal projection $w$ onto
$\widetilde D^\perp$.  Write
\begin{equation}\label{eq:common-translation}
  \widetilde c_i=w+v_i,
  \qquad v_i\in\widetilde D.
\end{equation}
The sufficient statistics have means $v_i$; the common component $w$ does not
appear.  Since $w\perp\widetilde D$ and the $\widetilde c_i$ have equal norm,
\begin{equation}\label{eq:common-radius}
  \norm{v_i}^2=E-\norm{w}^2=:r^2
\end{equation}
is independent of $i$, with $0\leq r\leq\sqrt E$, and
$\dim\widetilde D\leq M-1$.

If $r>0$, divide the observation by $\sqrt{N_0/2}$ and apply
\Cref{cor:mgf} together with the maximum-likelihood identity to the unit
vectors $v_i/r$, after identifying $\widetilde D$ with a Euclidean space and
embedding it in $\R^{M-1}$ if necessary.  Their probability of correct
decoding is no larger than that of a regular $(M-1)$-simplex of radius $r$.
If $r=0$, all finite-dimensional means coincide and the optimal probability is
$1/M$.

We now compare different radii.  Let $0\leq r_1\leq r_2$, put
$\sigma^2=N_0/2$, and consider a fixed regular simplex
$x_1,\ldots,x_M$.  If $r_2>0$, set $\eta=r_1/r_2$.  From the observation
$Y_2=r_2x_I+\sigma Z$, form
\begin{equation}\label{eq:radius-transformation}
  K(Y_2)=\eta Y_2+\sigma\sqrt{1-\eta^2}\,Z',
\end{equation}
where $Z'$ is an independent standard Gaussian vector.  Conditional on $I$,
$K(Y_2)$ has the same law as $r_1x_I+\sigma Z''$.  Thus the radius-$r_1$
experiment is a message-independent randomized degradation of the
radius-$r_2$ experiment.  Composing a decoder for the former with this kernel
may produce a randomized decoder for the latter.  Randomization cannot
improve Bayes success for finitely many hypotheses under
zero--one loss: conditional on an observation, success is a linear function
of the decision probabilities and is maximized by a deterministic
maximum-a-posteriori rule.  The optimal probability is therefore
nondecreasing in $r$.  Equivalently, the
smaller-radius experiment is a Blackwell degradation of the larger-radius
experiment \cite{Blackwell1953}.  A regular simplex of radius
$\sqrt E$ is feasible and attains the resulting upper bound.
\end{proof}

The equalization step uses the maximal energy constraint on each codeword.  It
does not yield the same conclusion under an average-over-codebook energy
constraint.

\subsection{Evaluation of the simplex decoding probability}

Let $e_1,\ldots,e_M$ be the standard basis of $\R^M$ and set
\begin{equation}\label{eq:orthogonal-amplitude}
  A=\sqrt{\frac{EM}{M-1}},
  \qquad
  u_i=Ae_i,
  \qquad
  \bar u=\frac1M\sum_{j=1}^M u_j.
\end{equation}
The centered vectors $u_i-\bar u$ form a feasible regular simplex and
\begin{equation}\label{eq:centered-orthogonal-radius}
  \norm{u_i-\bar u}^2=A^2\left(1-\frac1M\right)=E.
\end{equation}
The orthogonal representatives $u_i$ have squared norm $EM/(M-1)>E$ when
$M>1$ and are not asserted to be feasible codewords.  They are used only to
evaluate the equivalent translated experiment: adding the common vector
$\bar u$ to every signal and to the observation is a deterministic bijection
and leaves all likelihood ratios and optimal decoding probabilities
unchanged.

Suppose message $1$ is sent and write $\sigma=\sqrt{N_0/2}$.  In the
orthogonal representation, correct maximum-likelihood decoding occurs when
\[
  A+\sigma W_1\geq\sigma W_j\qquad(2\leq j\leq M),
\]
for independent standard Gaussian random variables
$W_1,\ldots,W_M$.  Conditioning on $W_1=W$ gives
\begin{equation}\label{eq:conditional-pc}
  P_{\mathrm c}^*(E,M)
  =\E\Phi\!\left(W+\frac A\sigma\right)^{M-1}
  =p_M\!\left(\sqrt{\frac{2EM}{(M-1)N_0}}\right).
\end{equation}
By symmetry this is also the average success probability, proving
\eqref{eq:pc-opt}.

The function $p_M$ from \eqref{eq:pm-def} is continuous and
\begin{equation}\label{eq:pm-derivative}
  p_M'(a)=(M-1)\E\!\left[
       \Phi(W+a)^{M-2}\varphi(W+a)
     \right]>0.
\end{equation}
At $a=0$, the random variable $\Phi(W)$ is uniform on $(0,1)$, so
$p_M(0)=1/M$.  Dominated convergence gives $p_M(a)\uparrow1$ as
$a\to\infty$.  These facts prove the assertions about $a_{M,\epsilon}$ and
$E^*(M,\epsilon)$ in \Cref{cor:finite-energy}.  They also show that the set in
\eqref{eq:M-star} has a largest element.  For fixed $E$, the shift
$\sqrt{2EM/((M-1)N_0)}$ remains bounded as $M\to\infty$, whereas the maximum
of $M-1$ independent standard Gaussian random variables diverges in
probability.  Therefore the correct-decoding probability tends to zero.

\begingroup
\small
\bibliographystyle{alpha}
\bibliography{references}
\endgroup

\end{document}